\newtheorem{theo}{Theorem}
\newtheorem{lemma}{Lemma}
\newtheorem{prop}{Proposition}
\newtheorem{rem}{Remark}
\numberwithin{equation}{section}
\numberwithin{figure}{section}
\newcommand{\N}{\mathbb{N}}
\newcommand{\R}{\mathbb{R}}
\renewcommand{\geq}{\geqslant}
\renewcommand{\leq}{\leqslant}
\renewcommand{\tilde}{\widetilde}
\title{Stability for small data: the drift model of the conformal method}
\author{Caterina Vâlcu\\ \'Ecole polytechnique\thanks{CMLS, \'Ecole Polytechnique, F-91128 Palaiseau Cedex, France.  E-mail: \texttt{maria-caterina.valcu@polytechnique.edu}}}
\date{\today}
\begin{document}
\newgeometry{left=3.5cm,right=2.5cm,top=2.5cm,bottom=3cm}

\maketitle
\begin{abstract} The conformal method in general relativity aims to successfully parametrise the set of all initial data associated with globally hyperbolic spacetimes. One such mapping was suggested by David Maxwell \cite{Max14b}. I verify that the solutions of the corresponding conformal system are stable, in the sense that they present \textit{a priori} bounds under perturbations of the system's coefficients. This result holds in dimensions $3\leq n\leq 5$, when the metric is conformally flat, the drift is small. A scalar field with suitably high potential is considered in this case.
\end{abstract}
\section{Introduction}
A spacetime is defined as the equivalence class, up to an isometry, of Lorentzian manifolds $(\tilde{M},\tilde{g})$ of dimension $n+1$, which satisfy the Einstein field equations
\begin{equation}
Ric_{\alpha\beta}(\tilde{g})-\frac{1}{2}R(\tilde{g})\tilde{g}_{\alpha\beta}=8\pi T_{\alpha\beta},
\end{equation}
$\alpha,\beta=\overline{1,n+1}$. We have used the following notation: $R(\tilde{g})$ is the scalar curvature of $\tilde{g}$, $Ric$ the Ricci curvature and $T_{\alpha\beta}$ the stress-energy tensor. If $T_{\alpha\beta}=0$, we describe the vacuum. If
\begin{equation}
T_{\alpha\beta}=\tilde{\nabla}_\alpha\tilde\psi\tilde{\nabla}_\beta\tilde\psi-\left(\frac{1}{2}|\tilde{\nabla}\tilde\psi|^2_{\tilde{g}}+V(\tilde\psi)\right)\tilde{g}_{\alpha\beta},
\end{equation}
the model corresponds to the existence of a scalar field $\tilde\psi\in\mathcal{C}^\infty(M)$ having potential $V\in\mathcal{C}^\infty(\R)$. By correctly choosing $\psi$ and $V$, we can describe the vacuum with cosmological constant and the Einstein-Klein-Gordon setting.
\par A globally hyperbolic spacetime accepts initial data $(M,g,K,\psi,\pi),$ where 
\begin{itemize}
\item $(M,g)$ is an n-dimensional Riemannian manifold,
\item $K$ is a symmetric 2-tensor corresponding to the second fundamental form,
\item  $\psi$ represents the scalar field in $M$, and
\item $\pi$ is its derivative.
\end{itemize}
The associated spacetime development takes the form $(M\times \R,\tilde{g}, \tilde{\psi})$, where $\tilde{g}$ is a Lorentzian metric that verifies $\tilde{g}|_{M}=\hat{g}$ and $\tilde\psi$ is a scalar field such that $\tilde{\psi}|_{M}=\hat{\psi}$ and $\partial_t\tilde{\psi}|_{M}=\hat{\pi}$.
\par Through the work of Choquet-Bruhat and Geroch, having the initial data verify the constraint equations is proved to be not only a necessary, but a sufficient condition for the development of a maximal, globally hyperbolic space-time \cite{Cho52,ChoGer69}:
\begin{equation}
\begin{array}{r l}
 R(\hat{g})+(tr_{\hat{g}}\hat{K})^2-|\hat{K}|^2_{\hat{g}} &= \hat{\pi}^2+|\hat{\nabla}\hat{\psi}|^2_{\hat{g}}+2V(\hat{\psi})\\
\partial_i(tr_{\hat{g}} \hat{K})-\hat{K}_{i,j}^j&= \hat{\pi}\partial_i\hat{\psi},
\end{array}
\end{equation}
The above system is clearly under-determined, which allows for a good amount of freedom in choosing $(\hat{g},\hat{K},\hat{\psi},\hat{\pi})$.
\par The conformal method began with Lichnerowicz \cite{Lic44}, and was later developped by York, Jr., \'{O} Murchadha and Pfeiffer,  \cite{Yor73,OMuYor74,Yor99,PfeYor03}. We allow for
the constraint equations to be transformed into a determined system of equations by fixing well-chosen quantities (see Choquet-Bruhat, Isenberg and Pollack \cite{ChoIsePol07}). Essentially, the technique maps a space of parameters to the space of solutions.
\par Given an initial data set $(\hat{g},\hat{K},\hat{\psi},\hat{\pi})$, the classical choice of parameters is $(\mathbf{g},\mathbf{U},\tau,\psi,\pi;\alpha)$: in this case, the conformal class $\mathbf{g}$ is represented by a Riemannian metric $g$, the smooth function  $\tau=\hat{g}^{ab}\hat{K}_{ab}$ is a mean curvature and the conformal momentum $\mathbf{U}$ measured by a volume form $\alpha$ (volume gauge) is a 2-tensor that is both trace-free and divergence-free with respect to $g$ (a transverse-traceless tensor). We sometimes prefer to indicate the volume gauge by the densitized lapse 
\begin{equation}
\tilde{N}_{g,\alpha}:=\frac{\alpha}{dV_g}.
\end{equation}
In 2014, Maxwell introduces a variant to the standard conformal method called ``the drift method" \cite{Max14b}. Very succinctly, it differs from its predecessor in that it replaces the mean curvature $\tau$ with two new conformal data, a volumetric momentum and a drift. These new quantities are defined by the volumetric equivalent to the York splitting \cite{Max15}:
\begin{equation}
\tau=\tau^*+N_{g,\omega}div_{g} (V+Q)
\end{equation}
where $\tau^*\in\R$, $V$ is a smooth vector field and $Q$ is a conformal Killing field.
\par Given a gauge $\omega$, one might choose
\begin{enumerate}
\item an arbitrary representative $g_{ab}\in\mathbf{g}$,
\item the unique densitized lapse $\tilde{N}_{g,\omega}$,
\item the unique TT-tensor $U_{ab}$ such that $(g_{ab},U_{ab})=\mathbf{U}$, where $\mathbf{U}$ is the conformal momentum as measured by $\omega$, and
\item a vector field $\tilde{V}$, unique up to a conformal Killing field, such that $(g_{ab},\tilde{V}_{ab})=\mathbf{V}$, where $\mathbf{V}$ is the volumetric drift measured by $\omega$.
\end{enumerate}
We denote by
\begin{equation}
q:=\frac{2n}{n-2}
\end{equation}
the critical Sobolev constant corresponding to the embedding of $H^1$ into the Lebesgue spaces. Let
\begin{equation}
\mathcal{L}_g W_{ij}=W_{i,j}+W_{j,i}-\frac{2}{n}div_g W g_{ij}
\end{equation}
be the conformal Killing operator with respect to $g$. We use the decompositions
\begin{equation}
\begin{array}{r l}
 \hat{g}_{ab}&=u^{q-2}g_{ab}  \\
 \hat{K}_{ab}&=u^{-2}[\frac{\tilde{N}}{2}(\mathcal{L}_gW)_{ab}+U_{ab}]+\frac{1}{n}u^{q-2}g_{ab}\left(\tau^*+\tilde{N}div_g(V+Q)\right).
\end{array}
\end{equation}
where $u$ is a scalar function and $W$ and $Q$ are vector fields, all unknown.
\par For additional details on the drift method, see the annex: section \ref{drift method continued}.
\subsection{The viability of conformal method models}\label{viability section}
The conformal method essentially provides a mapping from the set of conformal data representatives to the set of initial data,
\begin{equation}
\begin{array}{r c l}
\text{Conformal data representatives}&\rightarrow& \text{Initial data.}
\end{array}
\end{equation}
More precisely, in the case of the classical conformal method, given a volume gauge $\omega$, the mapping presents as
\begin{equation}
\begin{array}{r c l}
(g_{ab},U_{ab},\tau; N)&\xrightarrow{\text{solve} (u,W)}&(\hat{g},\hat{K}).
\end{array}
\end{equation}
By the nature of the conformal method, the mapping is unto: from any set of initial data, one can calculate a set of corresponding conformal data representatives. We list a number of criteria by which the strength of a conformal method may be judged.
\begin{description}
\item[1)] Is the mapping a bijection?
\end{description}
Ideally, to any set of conformal data representatives there corresponds 
one and only one set of initial data. Thus, the set of all possible initial data is completely characterized by the conformal method. 
\par In lieu of such a strong result, one may ask:
\begin{itemize}
\item Where is the mapping well-defined (in the sense that there exists $(\hat{g},\hat{K})$ corresponding to a fixed set of conformal data representatives)? As long as we properly identify the problem sets, we can simply remove them from the domain.
\item Where is the mapping one-to-one? If we obtain multiple solutions, where does this happen? 
\end{itemize}  
\begin{description}
\item[2)] Is the mapping continuous?
\end{description}
This question tests that the mapping is, in some sense, physically relevant.
\subsection{The main result.}
Let $(M,g)$ be a closed locally conformally flat manifold of dimension $n$, which can be $3$, $4$ and $5$.  Let
\begin{equation}
\Delta_g= -div_g\nabla
\end{equation}
be the Laplace-Beltrami operator with non-negative eigenvalues. Similarly, let 
\begin{equation}
\overrightarrow{\Delta}_g W_i=-div_g(\mathcal{L}_g W)_i=-(\mathcal{L}_g W)_{ij,}^{\,\,\,\,\,\, j}
\end{equation}
be the corresponding Lam\'e operator. The volumetric drift model proposed by Maxwell leads to the reworking of the Einstein constraint equations as
\begin{equation}\label{syst of Maxwell2}
\begin{array}{r l}
\Delta_g u+\frac{n-2}{4(n-1)}(R(g)+\vert\nabla\psi\vert_g^2)u= &\frac{(n-2)\vert U+\mathcal{L}_g W\vert^2+\pi^2}{4(n-1)u^{q+1}}\\ \\
&+\frac{n-2}{4(n-1)}\Big(2V(\psi)-\frac{n-1}{n}\tau^*\\ \\
&\quad\quad\quad\quad\quad +\frac{n-1}{n}\frac{\tilde{N}div_g(u^q\tilde{V})^2}{u^{2q}}\Big)u^{q-1} \\ \\
div_g\left(\frac{\tilde{N}}{2}\mathcal{L}_g W\right)=&\frac{n-1}{n}u^q \mathbf{d}	\left(\frac{\tilde{N}div_g(u^{q}\tilde{V})}{2u^{2q}}\right)+\pi\nabla\psi.
\end{array}
\end{equation}
The existence of solutions to this system was treated in \cite{HolMaxMaz18} in the non-focusing case, and in \cite{Val19} for the focusing case. The classical conformal method, also in the focusing regime, is treated in \cite{Pre14}. See \cite{DruHeb09} for the precursor of the asymptotic techniques used in the existence proofs.
\par The second equation may be rewritten as:
\begin{equation}
\begin{array}{r l}
 \overrightarrow{\Delta}_gW =&\langle \nabla\ln \tilde{N},\mathcal{L}_g W\rangle+2\frac{n-1}{n-2}\left(\frac{3n-2}{n-2}\frac{\left\langle\nabla u,\tilde{V}\right\rangle\nabla u}{u^2}-\frac{\left\langle\nabla^2 u,\tilde{V}\right\rangle}{u}\right)  \\
 &+2\frac{n-1}{n-2}\left(-\frac{\left\langle\nabla u,\tilde{V}\right\rangle}{u}\nabla\ln\tilde{N}+div_g\tilde{V}\frac{\nabla u}{u}-\frac{\langle \nabla \tilde{V},\nabla u\rangle}{u}\right)\\
 &-\frac{n-1}{n}\left(div_g\tilde{V}\nabla\ln\tilde{N}+\nabla div_g\tilde{V}\right)-2\tilde{N}^{-1}\pi\nabla\psi.
\end{array}
\end{equation}
In the present paper, it sometimes proves useful to work with the more general equation
\begin{equation}\label{systemshort2}
\begin{array}{r l}
\Delta_g u+hu = &  fu^{q-1}+\frac{\rho_1+\vert\Psi+\rho_2\mathcal{L}_gW\vert^2_g}{u^{q+1}} \\
 &\quad\quad -\frac{b}{u}-c\langle\nabla u,Y\rangle \left(\frac{d}{u^2}+\frac{1}{u^{q+2}}\right)-\frac{\langle\nabla u,Y\rangle^2}{u^{q+3}}\\
\end{array}
\end{equation}
where we make the following substitutions:
\begin{equation}\label{notation maxwell}
\begin{array}{l}
h=\frac{n-2}{4(n-1)}\left(\mathcal{R}_g-|\nabla\psi|^2_g\right),\quad f=\frac{n-2}{4(n-1)}\left(2V(\psi)-\frac{n-1}{n}\tau^{*2}\right),\\
\rho_1=\frac{n-2}{4(n-1)}\left(\pi-\frac{n-1}{n}\tilde{N}^2div_g\tilde{V}\right),\quad \rho_2=\sqrt{\frac{n-2}{n-1}}\frac{\tilde{N}}{4}, \quad \Psi=\sqrt{\frac{n-2}{n-1}}\frac{U}{2},\\
b=\frac{n-2}{2n}\tau^*\tilde{N}div_g\tilde{V},\quad c=\sqrt{\frac{n-2}{n}},\quad d=\tau^*,\quad Y=\sqrt{\frac{n}{n-2}}\tilde{N}\tilde{V}.\\
\end{array}
\end{equation}
Consider $(u_\alpha, W_\alpha)_{\alpha\in\N}$ a sequence of smooth solutions of perturbations of the system \eqref{systemshort2},
\begin{equation}\label{perturbed system2}
\begin{array}{r l}
\Delta_{g} u_\alpha+h_\alpha u_\alpha = &  f_\alpha u_\alpha^{q-1}+\frac{\rho_{1,\alpha}+\vert\Psi_\alpha+\rho_{2,\alpha}\mathcal{L}_gW_\alpha\vert^2_g}{u_\alpha^{q+1}} \\
 &\quad\quad -\frac{b_\alpha}{u_\alpha}-c_\alpha\langle\nabla u_\alpha,Y_\alpha\rangle \left(\frac{d_\alpha}{u_\alpha^2}+\frac{1}{u_\alpha^{q+2}}\right)-\frac{\langle\nabla u_\alpha,Y_\alpha\rangle^2}{u_\alpha^{q+3}}\\
\overrightarrow{\Delta}_g W_\alpha =
    & \mathcal{R}_\alpha(u_\alpha,\nabla u_\alpha,\nabla^2 u_\alpha,\mathcal{L}_gW_\alpha).
\end{array}
\end{equation}
Here, we ask that the perturbed coefficients converge towards the initial ones in a sufficiently regular way, \textit{e.g} in $\mathcal{C}^{2,\eta}$ norm. The scalar solutions $u_\alpha$ are positive as long as $\rho_1$ which is positive. To see this, let $m_\alpha=\min_{x\in M}u_\alpha(x)=u_\alpha(x_\alpha)>0$ and let
\begin{equation}
a_\alpha=\rho_{1,\alpha}+\vert\Psi_\alpha+\rho_{2,\alpha}\mathcal{L}_gW_\alpha\vert^2_g.
\end{equation}
Since $\nabla u_\alpha(x_\alpha)=0$ and since $\Delta_g u_\alpha(x_\alpha)\leq 0$, we have 
$$h_\alpha(x_\alpha)m_\alpha-f_\alpha(x_\alpha)m_\alpha^{q-1}-\frac{a_\alpha(x_\alpha)}{m_\alpha^{q+1}}+\frac{b_\alpha(x_\alpha)}{m_\alpha}\geq 0.$$
Since $a_\alpha\to a$ in ${\mathcal C}^0\left(M\right)$ as $\alpha\to +\infty$ and $a>0$ in $M$, there exists $\varepsilon>0$ such that $m_\alpha\geq \varepsilon$, meaning that
\begin{equation}\label{varepsilon2}
u_\alpha\geq\varepsilon>0\quad \text{ for all } x\in M \text{ and all } \alpha.
\end{equation}
We would like to prove the \textit{a priori} estimate 
\begin{equation}
\vert\vert u_\alpha\vert\vert_{\mathcal{C}^{2,\eta}}+\vert\vert W_\alpha\vert\vert_{\mathcal{C}^{1,\eta}}\leq C.
\end{equation}
If this is true, then by standard elliptic theory there exists, up to a subsequence, a $\mathcal{C}^{2 ,\eta}$ limit of $(u_\alpha,W_\alpha)$ solving the limiting system \eqref{systemshort2}. In effect, since the system \eqref{systemshort2} is invariant by the addition of conformal Killing fields, it suffices to show that
\begin{equation}
\vert\vert u_\alpha\vert\vert_{L^\infty}+\vert\vert\nabla u_\alpha\vert\vert_{L^\infty}+\vert\vert \nabla^2 u_\alpha\vert\vert_{L^\infty}+\vert\vert \mathcal{L}_g W_\alpha\vert\vert_{L^\infty}\leq C.
\end{equation}
The proof follows by contradiction. We assume instead that there exists a sequence of solutions $(u_\alpha,W_\alpha)$ of the perturbed system such that
\begin{equation}\label{hyp to contradict2}
\vert\vert u_\alpha\vert\vert_{L^\infty}+\vert\vert\nabla u_\alpha\vert\vert_{L^\infty}+\vert\vert \nabla^2 u_\alpha\vert\vert_{L^\infty}+\vert\vert \mathcal{L}_g W_\alpha\vert\vert_{L^\infty}\to\infty\quad\text{ as }\quad\alpha\to\infty.
\end{equation}
The main theorem in this paper is the following.
\begin{theo}\label{stability syst2}
Let $(M,g)$ be a closed Riemannian manifold of dimension $n=3,4,5$, where $g$ is locally conformally flat. Let $\frac{1}{2}<\eta<1$ and $0<\alpha<1$. Let $a$, $b$, $c$, $d$, $f$, $h$, $\rho_1$, $\rho_2$, $\psi$, $\pi$, $\tilde{N}$ be smooth functions on $M$, let $\tilde{V}$ and $Y$ be smooth vector field on $M$. For any $0<\theta<T$, there exists $S_{\theta,T}$ and $\vartheta_{\theta,T}$ such that, given any parameters within
\begin{equation}
\begin{array}{c}
\mathcal{E}_{\theta,T}:=\Big\{(f,a,b,c,d,h,\rho_1, \rho_2, Y)\times(\tilde{N},\tilde{V},\psi,\pi),
\quad f\geq \theta,\quad a\geq \theta,\quad \tilde{N}\geq \theta,\quad\quad\quad\quad\quad\quad\quad\quad\quad\quad\quad\quad \\ \\
||f||_{\mathcal{C}^{1,\eta}}\leq T,\quad\quad\quad\quad\quad\quad\quad\quad\quad\quad\quad\quad\quad\quad\quad\quad\quad\quad\quad\quad\quad\quad\quad\quad\quad\quad\quad\quad\\
\quad ||a||_{\mathcal{C}^{1,\alpha}},||b||_{\mathcal{C}^{1,\alpha}}, ||c||_{\mathcal{C}^{1,\alpha}}, ||d||_{\mathcal{C}^{1,\alpha}},\quad\quad\quad\quad\quad\quad\quad\quad\quad\quad\quad\quad\quad\quad\quad\quad\quad\quad\\ ||\rho_1||_{\mathcal{C}^{1,\alpha}}, ||\rho_2||_{\mathcal{C}^{1,\alpha}}, ||h||_{\mathcal{C}^{1,\alpha}}, ||Y||_{\mathcal{C}^{1,\alpha}}\leq T,\quad\quad\quad\quad\quad\quad\quad\quad\quad\quad\quad\\ \\ 
\text{and }\quad||\tilde{N}||_{\mathcal{C}^{2,\alpha}}, ||\tilde{V}||_{\mathcal{C}^{2,\alpha}}\leq T
\quad\quad\quad\quad\Big\},
\end{array}
\end{equation}
with
\begin{equation}\label{extra hyp2}
\vert\vert Y\vert\vert_{\mathcal{C}^{1,\alpha}}, \vert\vert \tilde{V}\vert\vert_{\mathcal{C}^{2,\alpha}}\leq \vartheta_{\theta,T},
\end{equation}
then any smooth solution $(u,W)$ \eqref{syst of Maxwell2}, with $u>0$, satisfies 
\begin{equation}
||u||_{\mathcal{C}^{2,\alpha}}+\vert\vert \mathcal{L}_g W\vert\vert_{\mathcal{C}^{1,\alpha}}\leq S_{\theta,T}.
\end{equation}
\end{theo}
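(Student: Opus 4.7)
The plan is to argue by contradiction and carry out a blow-up analysis in the spirit of Druet-Hebey-Robert and Premoselli \cite{DruHeb09, Pre14}, with the drift terms handled as perturbations via the smallness hypothesis \eqref{extra hyp2}. Assume that \eqref{hyp to contradict2} holds and set $\mu_\alpha$ so that $\mu_\alpha^{-(q-2)/2}=\Vert u_\alpha\Vert_{L^\infty}$, and pick a maximum point $x_\alpha$.

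The first step is to decouple the vector equation from the scalar equation. Applying elliptic regularity for the Lamé operator $\overrightarrow{\Delta}_g$ to the second line of \eqref{perturbed system2} gives an estimate of the form $\Vert\mathcal{L}_g W_\alpha\Vert_{\mathcal{C}^{1,\alpha}} \leq C(\Vert Y_\alpha\Vert_{\mathcal{C}^{1,\alpha}},\Vert \tilde V_\alpha\Vert_{\mathcal{C}^{2,\alpha}})\cdot \Phi(u_\alpha)+C'$ with $\Phi$ involving $\Vert u_\alpha\Vert_{L^\infty}$, $\Vert\nabla u_\alpha/u_\alpha\Vert$ and $\Vert\nabla^2 u_\alpha/u_\alpha\Vert$. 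The key point is that the drift-dependent terms on the right-hand side carry a factor proportional to $\Vert \tilde V\Vert$, so under \eqref{extra hyp2} they can be absorbed into the left-hand side of a bootstrap, reducing the analysis to pointwise control of $u_\alpha$ alone; the uniform lower bound \eqref{varepsilon2} makes the negative powers of $u_\alpha$ harmless.

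The second step is the standard concentration analysis. Defining the conformal rescaling $\tilde u_\alpha(x)=\mu_\alpha^{(n-2)/2}u_\alpha(\exp_{x_\alpha}(\mu_\alpha x))$, I would verify that the drift contributions in \eqref{perturbed system2} scale subcritically, again using \eqref{extra hyp2}: after rescaling, the terms $\langle\nabla u,Y\rangle^2/u^{q+3}$ and $c\langle\nabla u,Y\rangle/u^{q+2}$ acquire positive powers of $\mu_\alpha$ and vanish in the limit, so $\tilde u_\alpha$ converges in $\mathcal{C}^2_{loc}(\R^n)$ to a standard Aubin-Talenti bubble of the critical Yamabe-type equation $\Delta\tilde u=f(x_0)\tilde u^{q-1}$ (the coefficient $f\geq\theta$ ensures the bubble is non-degenerate). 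Iterating via a Struwe-type decomposition yields a finite sum of bubbles with a sharp $C^0$ pointwise estimate $u_\alpha(x)\leq C\sum_i B_{i,\alpha}(x)$, where $B_{i,\alpha}$ are standard bubbles.

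The third and decisive step is a Pohozaev-type identity on small geodesic balls around each concentration point. Since $g$ is locally conformally flat and $n\in\{3,4,5\}$, the Weyl term is absent and the leading-order obstruction to concentration is a positive-mass-type quantity, as in Druet-Hebey-Premoselli. The drift terms contribute integrals of the form $\int \mu_\alpha^{-2}\langle\nabla u,Y\rangle^2 u^{-q-3}\,dV$, which by \eqref{extra hyp2} and the pointwise bubble estimate are of strictly lower order than the dominant term and thus cannot prevent the contradiction; the sign condition $f\geq\theta>0$ forces the dominant coefficient to have the right sign to contradict non-negativity of the mass in these low dimensions.

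The main obstacle I expect is step two, specifically the verification that the drift terms $\langle\nabla u,Y\rangle$, $\langle\nabla u,Y\rangle^2/u^{q+3}$ and the coupling term $div_g(u^q\tilde V)/u^{2q}$ on the right-hand side of the vector equation admit clean scaling under the blow-up and produce an error that is genuinely $o(1)$ uniformly in the bubble parameters. This is precisely what the smallness hypothesis \eqref{extra hyp2} is designed to guarantee: without it, the drift terms would be of the same order as the critical nonlinearity $fu^{q-1}$ after rescaling and the limiting profile would no longer be a standard bubble, breaking the Pohozaev contradiction.
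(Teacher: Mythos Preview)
Your overall architecture (contradiction, rescaling, bubble extraction, Pohozaev) matches the paper, but Step~1 contains a genuine gap that propagates into Step~2.

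The decoupling you propose does not close. The vector equation carries $\langle\nabla^2 u,\tilde V\rangle/u$ and $\langle\nabla u,\tilde V\rangle\nabla u/u^2$, so your $\Phi(u_\alpha)$ necessarily involves $\Vert\nabla u_\alpha/u_\alpha\Vert_{L^\infty}$ and $\Vert\nabla^2 u_\alpha/u_\alpha\Vert_{L^\infty}$, not just $\Vert u_\alpha\Vert_{L^\infty}$. Feeding your bound $\Vert\mathcal{L}_g W_\alpha\Vert\le \vartheta\,\Phi(u_\alpha)+C'$ back into the scalar equation (via the $\vert\mathcal{L}_g W_\alpha\vert^2/u^{q+1}$ term) produces an inequality of the schematic form $X\le C(1+\vartheta^2 X^4)$ with $X=\Vert u_\alpha\Vert_{\mathcal{C}^{2,\alpha}}$; smallness of $\vartheta$ does not by itself bound $X$. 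Equally, defining $\mu_\alpha$ from $\Vert u_\alpha\Vert_{L^\infty}$ alone misses blow-up scenarios where $u_\alpha$ stays bounded but $\nabla u_\alpha/u_\alpha$, $\nabla^2 u_\alpha/u_\alpha$ or $\mathcal{L}_g W_\alpha$ explode; these cannot be ruled out in advance by the global bootstrap you sketch.

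The paper avoids this by making the blow-up quantity itself carry all four pieces: one works with the scale-invariant
\[
\Psi_\alpha(x)=d_g(x,\mathcal S_\alpha)^n\Big(u_\alpha^q+\Big\vert\tfrac{\nabla u_\alpha}{u_\alpha}\Big\vert^n+\Big\vert\tfrac{\nabla^2 u_\alpha}{u_\alpha}\Big\vert^{n/2}+\vert\mathcal L_g W_\alpha\vert\Big),
\]
selects concentration points via the Druet--Hebey critical-point lemma, and after rescaling uses Green's representation \emph{for the scalar equation first} to obtain an integral bound on $\vert\mathcal{L}_\xi\hat Z_\alpha\vert^2$, then Green's representation for the Lam\'e operator (with Neumann data, which is where local conformal flatness enters) to conclude $\vert\mathcal{L}_\xi\hat Z_\alpha\vert\to0$. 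Only after this local, interleaved argument does one get a clean limiting scalar equation. A further point your Step~2 misses: when the rescaled $\hat v_\alpha(0)\to0$ but $v_\alpha(x_\alpha)$ stays bounded, the gradient term $\langle\nabla w,Y(x_0)\rangle^2/w^{q+3}$ survives in the limit with a nonzero coefficient, and one needs a separate Liouville-type lemma (the paper's Lemma~\ref{lem: A12}) to force $w$ constant. The smallness hypothesis~\eqref{extra hyp2} is used at a single, more delicate moment: to kill the coupling in the limiting \emph{vector} equation so that $\mathcal L_\xi Z=0$, not to decouple the system globally.
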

A few remarks are in order at this point. We have taken the decision to write the theorem using the physical coordinates for the second equation, and the general coefficients for the first. The same is true for the ensuing proof. This forcibly leads to some redundancies. We recall that $Y=\sqrt{\frac{n}{n-2}}\tilde{N}\tilde{V}$, so asking for bounds on $\tilde{N}$ and $\tilde{V}$ imply bounds on $Y$. The reasons why we still choose this writing are as follows:
\begin{enumerate}
\item The general notation of the first equation is the same as the ones used in the paper proving the existence of solutions to the system, and are more readable than the physical coordinates one. Moreover, they more accurately capture the nature of the scalar equation and make it easier to handle, since one can follow each of the different non-linear terms separately.
\item Writing the second equation in more general terms can prove counterproductive. For one, introducing new coefficients would actually burden the notation in this particular case. 
\item Most importantly, one hopes that there is a better way to treat potential blow-ups caused by $\mathcal{L}_g W_\alpha$. This could follow from a more detailed analysis of the second equation, where even the exact size of each of the dimensional constants can potentially play a role, given the coupling of the system.  
\end{enumerate}
\par In the proof, we use the smallness of $Y$ (and thus, $\tilde{V}$, since $\tilde{N}\geq \theta$) as sparsely as possible, and we take care to emphasize it each time. We do this out of the desire to provide what we hope is useful insight into current technical difficulties. By looking at similar systems, such as the  Yamabe problem, one can hope that by advancing the necessary techniques, one can successfully remove the smallness hypothesis altogether. The ultimate goal is to work towards a more proof of a more general stability.
\par For now, in the argument by contradiction, we are working with
\begin{equation}
Y_\alpha\to 0\quad\text{ in }\mathcal{C}^{1,\alpha}\quad\text{ and }\quad \tilde{V}_\alpha\to 0\quad\text{ in }\mathcal{C}^{2,\alpha}.
\end{equation}
\par The fact that $g$ is locally conformally flat is a condition we impose to get the improved estimates on  $\mathcal{L}_g W_\alpha$ that we need. We briefly explain the reasoning. The Green representation formula is applied on balls of diminishing radius $B_{x_\alpha}(\delta_\alpha)$, $\delta_\alpha\to 0$, where $x_\alpha$ is a concentration point. Moreover, we impose Neumann boundary conditions, so that there is no dependency on $W_\alpha$, but just $\mathcal{L}_{g_\alpha} W_\alpha$. The bounds needs to be uniform with respect to $\alpha$, which is why we need the kernel of $\overrightarrow{\Delta}_{g_\alpha}$ to have the same dimension as that of $\overrightarrow{\Delta}_\xi$, with $g_\alpha=\exp^*_{x_\alpha}(\delta_\alpha\cdot)$. 
\par The stability of the classical system, also in the focusing case, was treated in \cite{Pre16}.
\bigskip
\par\textbf{Outline of the paper.} The proof is structured as follows. In \textbf{Section 2}, we conformally change $(u_\alpha, W_\alpha)$ on $(M,g)$ to $(v_\alpha, Z_\alpha)$ defined in a Euclidean domain. In \textbf{Section 3}, we begin by obtaining pointwise estimates on both $v_\alpha$, $\nabla v_\alpha$, $\nabla^2 v_\alpha$ and $\mathcal{L}_g Z_\alpha$. \textbf{Section 4} begins with an immediate consequence of the aforementioned bounds: they yield a Harnack inequality on $v_\alpha$. Green's representation theory, applied to the elliptic operators of both the first and second equation, plays a central role in both obtaining and improving the aforementioned weak bounds on $\mathcal{L}_\xi Z_\alpha$.  The next step consists of using the techniques of asymptotic analysis to describe potential blow-up behaviour, and their interactions. All leads to a contradiction. 
\bigskip
\par\textbf{Acknowledgements.} I would like to extend my heartfelt thanks to Olivier Druet for his insight and continued support throughout the writing of this paper.
\section{Conformal changes of coordinates.}
Since $\left(M,g\right)$ is assumed to be locally conformally flat, for any sequence $x_\alpha\in M$ with $x_\alpha\to x$ as $\alpha\to +\infty$ and for any $\delta>0$ small enough, there exist smooth diffeomorphisms
\begin{equation}
\Phi_\alpha : U_\alpha\subset M \mapsto B_0\left(\delta\right)\subset {\mathbb R}^n
\end{equation}
and $\varphi_\alpha\in {\mathcal C}^\infty\left(B_0\left(\delta\right)\right)$ where $U_\alpha$ is some neighbourhood of $x_\alpha$ in $M$ such that 
\begin{equation}
\left(\Phi_\alpha^{-1}\right)^\star g = \varphi_\alpha(x)^{q-2}\xi
\end{equation}
where $\xi$ is the Euclidean metric. Moreover we can choose the diffeomorphisms $\Phi_\alpha$ and the functions $\varphi_\alpha$ to be uniformly bounded in any ${\mathcal C}^k$ for $k\le m$, $m$ fixed as we want. Note that we can also choose $\varphi_\alpha(0)=1$ and $\nabla\varphi_\alpha(0)=0$. 
For $x\in B_0(\delta)$, consider the change of functions
\begin{equation}
v_\alpha(x)=\varphi_\alpha(x)u_\alpha\circ\Phi_\alpha^{-1}(x)\quad\text{ and }\quad Z_\alpha(x)=\varphi_\alpha(x)^{2-q}\left(\Phi_\alpha\right)_*W_\alpha(x).
\end{equation}
This change of functions will be used repeatedly in the sequel.
First of all, note that, by \eqref{varepsilon2}, there exists $\varepsilon'>0$ such that
\begin{equation}\label{v varepsilon}
v_\alpha\geq \varepsilon'.
\end{equation}
Then it's convenient to recall the following formulas. Given that $\left(\Phi_\alpha^{-1}\right)^\star g_{ij}=\varphi_\alpha^{q-2}\xi_{ij}$, we see that the Laplace-Beltrami operator becomes
\begin{equation}
\begin{array}{r l}
\Delta_\xi v_\alpha & =\Delta_\xi\left(\varphi_\alpha u_\alpha\circ \Phi_\alpha^{-1}\right) \\
 &\quad\quad =\varphi_\alpha^{q-1}(x)\left(\Delta_g u_\alpha+\frac{n-2}{4(n-1)}R(g)u_\alpha\right)\left(\Phi_\alpha^{-1}\right)
\end{array}
\end{equation}
and that 
\begin{equation}
\varphi_\alpha^{q-2}\mathcal{L}_\xi Z_\alpha=\left(\Phi_\alpha\right)_* \left(\mathcal{L}_g W_\alpha\right).
\end{equation}
At last, the Lam\'e type operator transforms as
\begin{equation}
\overrightarrow{\Delta}_\xi\left(\varphi_\alpha^{2-q}\left(\Phi_\alpha\right)_*W_\alpha\right)_i-q\xi^{kl}\partial_k(\ln \varphi_\alpha)\mathcal{L}_\xi\left(\varphi_\alpha^{2-q}(\Phi_\alpha)_* W_\alpha\right)_{il}=\left(\Phi_\alpha\right)_*\left(\overrightarrow{\Delta}_g W_\alpha\right)_i,
\end{equation}
so
\begin{equation}
\left(\overrightarrow{\Delta}_\xi Z_\alpha \right)_i-q\left\langle\nabla\ln\varphi_\alpha,\mathcal{L}_\xi Z_\alpha \right\rangle_i=(\Phi_\alpha)_\star\left(\overrightarrow{\Delta}_g W_\alpha\right)_i.
\end{equation}
Simple but tedious computations lead then to the transformation of the system \eqref{perturbed system2} into 
\begin{equation}\label{conformal system}
\begin{array}{r l}
\Delta_\xi v_\alpha(x)+\tilde{h}_\alpha(x) v_\alpha(x)=& 
\tilde{f}_\alpha(x) v_\alpha^{q-1}(x)+\frac{\tilde{a}_\alpha(x)}{ v_\alpha^{q+1}(x)}- \frac{\tilde{b}_\alpha(x)}{v_\alpha(x)} \\
& -\langle\nabla v_\alpha(x),\tilde{Y}_\alpha(x)\rangle \left(\frac{\tilde{c}_\alpha(x)}{v_\alpha^2(x)}+\frac{\tilde{d}_\alpha(x)}{v_\alpha^{q+2}(x)}\right)\\
& -\frac{\langle\nabla v_\alpha(x),\tilde{Y}_\alpha(x)\rangle^2}{ v^{q+3}_\alpha(x)}\\
\left(\overrightarrow{\Delta}_\xi Z_\alpha\right)_i=&q\langle\nabla\ln\varphi_\alpha,\mathcal{L}_\xi Z_\alpha \rangle_i+\tilde{\mathcal{R}}_\alpha(v_\alpha,\nabla v_\alpha,\nabla^2 v_\alpha,\mathcal{L}_\xi Z_\alpha)_i
\end{array}
\end{equation}
where 
\begin{equation}\label{conformal system parameters}
\begin{array}{c}
\tilde{Y}_\alpha=\varphi_\alpha^2(\Phi_\alpha)_*Y_\alpha,\quad \tilde{f}_\alpha=f_\alpha\circ\Phi_\alpha^{-1},\\ \tilde{h}_\alpha=\varphi_\alpha^{q-2}\left(h_\alpha-\frac{n-2}{4(n-1)}R(g)\right)\circ \Phi_\alpha^{-1},\\ 
\tilde{b}_\alpha=\varphi_\alpha^{q}b_\alpha\circ\Phi_\alpha^{-1}-\varphi_\alpha\langle\nabla \varphi_\alpha,(\Phi_\alpha)_\star Y_\alpha\rangle c_\alpha\circ\Phi_\alpha^{-1},\\
\tilde{c}_\alpha=c_\alpha\circ\Phi_\alpha^{-1},\quad \tilde{d_\alpha}=2\varphi_\alpha\langle\nabla\varphi_\alpha,(\Phi_\alpha)_\star Y_\alpha\rangle+\varphi_\alpha^q d_\alpha\circ \Phi_\alpha^{-1},\\
\tilde{a}_\alpha=\tilde{\rho}_{1,\alpha}+\left|\tilde{\Psi}_\alpha+\tilde{\rho}_{2,\alpha}\mathcal{L}_\xi Z_\alpha\right|^2_\xi\\
\tilde{\rho}_{1,\alpha}=\varphi_\alpha^{2q}\rho_{1,\alpha}\circ \Phi_\alpha^{-1}+\varphi_\alpha^{q+1}\langle\nabla\varphi_\alpha,(\Phi_\alpha)_\star Y_\alpha \rangle d_\alpha\circ \Phi_\alpha^{-1}-\varphi_\alpha^2\langle\nabla \varphi_\alpha, (\Phi_\alpha)_\star, Y_\alpha \rangle^2,\\ \tilde{\rho}_{2,\alpha}=\varphi_\alpha^q\rho_{2,\alpha}\circ\Phi_\alpha^{-1},\quad
\tilde{\Psi}_\alpha=\varphi_\alpha^2\left(\Phi_\alpha\right)_*\Psi_\alpha
\end{array}
\end{equation}
and 
\begin{equation}\label{conformal system parameters-bis}
\begin{array}{rcl}
\widetilde{\mathcal{R}}_\alpha(v_\alpha,\nabla v_\alpha,\nabla^2 v_\alpha,\mathcal{L}_\xi Z_\alpha) &=& \left(\Phi_\alpha\right)_\star \mathcal{R}_\alpha(u_\alpha,\nabla u_\alpha,\nabla^2 u_\alpha,\mathcal{L}_g W_\alpha)\\ \\
&=&\langle \left(\Phi_\alpha\right)_\star \nabla \ln\tilde{N},{\mathcal L}_\xi Z_\alpha\rangle \\
&&+2\frac{n-1}{n-2}\varphi_\alpha^{2-q}\Big(\frac{3n-2}{n-2}\frac{\langle \nabla v_\alpha, \left(\Phi_\alpha\right)_\star \tilde{V}_\alpha\rangle \nabla v_\alpha}{v_\alpha^2} \\
&&\quad\quad\quad\quad\quad\quad\quad\quad- \frac{\langle \nabla^2 v_\alpha, \left(\Phi_\alpha\right)_\star \tilde{V}_\alpha\rangle}{v_\alpha}\Big)\\
&&+ \widetilde{\mathcal{T}}_\alpha(v_\alpha,\nabla v_\alpha, \nabla^2 v_\alpha).
\end{array}
\end{equation}
Here, $\widetilde{\mathcal{T}}_\alpha$ denotes the lower order terms of the second equation. It is clear that we have
\begin{equation}
\vert \widetilde{\mathcal{T}}_\alpha(v_\alpha,\nabla v_\alpha,\nabla^2 v_\alpha)\vert\leq C\left(1+\vert\vert(\Phi_\alpha)_\star\tilde{V}_\alpha\vert\vert_{\mathcal{C}^1} \left\vert\frac{\nabla v_\alpha}{v_\alpha}\right\vert \right).
\end{equation}
\section{Weak pointwise estimates}
The following result describes a pointwise estimate that holds everywhere on $M$. It provides a way to identify a set of points $\mathcal{S}_\alpha$ where $u_\alpha$ or $\mathcal{L}_g W_\alpha$ can potentially explode.
\begin{lemma}\label{lem: first estimate}
Let $(u_\alpha, W_\alpha)$ be a sequence of solutions of the perturbed system \eqref{perturbed system2}, verifying the non-compactness hypothesis \eqref{hyp to contradict2}. There exists an integer $N_\alpha\in\N^*$ and a set of critical points  $\mathcal{S}_\alpha=(x_{1,\alpha},\dots, x_{N_\alpha,\alpha})$ of $u_\alpha$ such that
\begin{equation}
d_g(x_{i,\alpha},x_{j,\alpha})^{n} u_\alpha(x_{i,\alpha})^q\geq 1,
\end{equation}
for all $1\leq i,j\leq N_\alpha$, $i\not=j$, and
\begin{equation}
\Big(\min_{1\leq i\leq N_\alpha}d_g(x_{i,\alpha},x)\Big)^n u_\alpha(x)\leq 1
\end{equation}
for any $x$ critical point of $u_\alpha$ in $M$, and
\begin{equation}
\begin{array}{r l}
 \Big(\min_{1\leq i\leq N_\alpha}&d_g(x_{i,\alpha},x)\Big)^n  \\
&\times \left(u_\alpha^q(x)+\left\vert\frac{\nabla u_\alpha(x)}{u_\alpha(x)}\right\vert^n+\left\vert\frac{\nabla^2u_\alpha(x)}{u_\alpha(x)}\right\vert^\frac{n}{2}+\vert\mathcal{L}_g W_\alpha\vert_g(x)\right)\leq C.
\end{array}
\end{equation}
\end{lemma}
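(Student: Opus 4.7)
\textbf{Construction of $\mathcal{S}_\alpha$.} I would build the set inductively. Choose $x_{1,\alpha}$ to be a maximum point of $u_\alpha$ on $M$ (in particular a critical point). Given $x_{1,\alpha},\dots,x_{k,\alpha}$, consider
\begin{equation*}
F^{(k)}_\alpha(x)=\Big(\min_{1\leq i\leq k}d_g(x_{i,\alpha},x)\Big)^{n}u_\alpha(x)^q.
\end{equation*}
If $\max_M F^{(k)}_\alpha\leq 1$, stop and set $N_\alpha=k$. Otherwise, let $x_{k+1,\alpha}$ be a maximum point; away from the finite set $\{x_{1,\alpha},\dots,x_{k,\alpha}\}$ the function $d_g(\cdot,x_{i,\alpha})$ is smooth, so maximality forces $\nabla u_\alpha(x_{k+1,\alpha})=0$. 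Re-indexing in order of decreasing $u_\alpha(x_{i,\alpha})$, the first separation inequality follows from the choice $F^{(k)}_\alpha(x_{k+1,\alpha})\geq 1$, and the second bound is just the stopping condition applied to critical points. The process terminates at each fixed $\alpha$ since the separation $B_{x_{i,\alpha}}(\tfrac12 u_\alpha(x_{i,\alpha})^{-q/n})$ are disjoint inside the compact manifold $M$.

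\textbf{The third estimate: contradiction setup.} Only the combined estimate involving derivatives and $\mathcal{L}_gW_\alpha$ remains. I would argue by contradiction: suppose that the function
\begin{equation*}
G_\alpha(x):=\Big(\min_{i}d_g(x_{i,\alpha},x)\Big)^{n}\Big(u_\alpha^q+\Big|\tfrac{\nabla u_\alpha}{u_\alpha}\Big|^n+\Big|\tfrac{\nabla^2 u_\alpha}{u_\alpha}\Big|^{n/2}+|\mathcal{L}_gW_\alpha|_g\Big)
\end{equation*}
satisfies $G_\alpha(y_\alpha)=\max_M G_\alpha\to+\infty$. Set $d_\alpha:=\min_i d_g(x_{i,\alpha},y_\alpha)$ and choose a scale $\mu_\alpha\to 0$ balancing the dominant term at $y_\alpha$; in the generic case where $u_\alpha^q$ dominates, take $\mu_\alpha=u_\alpha(y_\alpha)^{-2/(n-2)}$ (otherwise the analogous scale built from the dominant derivative term). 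Pass to the conformal coordinates of Section~2 and define the rescaled function
\begin{equation*}
\tilde v_\alpha(x)=\frac{v_\alpha(\Phi_\alpha(y_\alpha)+\mu_\alpha x)}{v_\alpha(y_\alpha)},\qquad\tilde Z_\alpha(x)=\mu_\alpha v_\alpha(y_\alpha)^{-1}Z_\alpha(\Phi_\alpha(y_\alpha)+\mu_\alpha x),
\end{equation*}
on the ball $B_0(\tfrac12 d_\alpha/\mu_\alpha)$, which by the construction of $\mathcal{S}_\alpha$ contains no critical point of the rescaled $u$-function.

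\textbf{Passage to the limit.} By the second inequality of the construction, $\tilde v_\alpha$ is bounded above (and by \eqref{v varepsilon} bounded below away from zero) on $B_0(\tfrac12 d_\alpha/\mu_\alpha)$. The scalar equation \eqref{conformal system} transforms under this rescaling into an equation of the same structure with coefficients that remain bounded in $\mathcal{C}^{1,\eta}$; importantly, the smallness hypothesis $\|Y\|_{\mathcal{C}^{1,\alpha}},\|\tilde V\|_{\mathcal{C}^{2,\alpha}}\leq\vartheta_{\theta,T}$, together with the assumption $Y_\alpha\to 0$, $\tilde V_\alpha\to 0$ in the argument by contradiction, makes the drift-type terms in both equations negligible at the relevant scale. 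Elliptic regularity for $\Delta_\xi$ yields uniform $\mathcal{C}^{2,\eta}$ bounds for $\tilde v_\alpha$ on compact subsets, and elliptic regularity for the Lamé operator $\overrightarrow{\Delta}_\xi$ (whose kernel behaves controllably thanks to the local conformal flatness, as emphasized in the paper's preamble) yields the corresponding bounds on $\mathcal{L}_\xi\tilde Z_\alpha$. Unwinding the scaling, one finds
\begin{equation*}
G_\alpha(y_\alpha)\leq C\Big(\frac{d_\alpha}{\mu_\alpha}\Big)^n\cdot\mu_\alpha^n\cdot\Big(u_\alpha(y_\alpha)^q+\text{derivative terms}\Big)=O(1),
\end{equation*}
contradicting $G_\alpha(y_\alpha)\to+\infty$.

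\textbf{Main obstacle.} The delicate point is the term $\mathcal{L}_g W_\alpha$ inside $G_\alpha$: the second equation of \eqref{perturbed system2} couples it to $\nabla^2 u_\alpha$, so controlling $\mathcal{L}_\xi\tilde Z_\alpha$ at the rescaled point requires simultaneous control of $\tilde v_\alpha$ up to second order. This is precisely where the smallness of $\tilde V$ (so the coefficient of $\nabla^2\tilde v_\alpha$ in $\widetilde{\mathcal{R}}_\alpha$ is small) and the local conformal flatness (which guarantees a uniform Green representation for $\overrightarrow{\Delta}_\xi$) must be exploited; the remaining bounds then follow by the same rescaling, with the two cases $d_\alpha/\mu_\alpha\to+\infty$ and $d_\alpha/\mu_\alpha=O(1)$ handled uniformly since in the second case the nearest critical point of $\mathcal{S}_\alpha$ appears at bounded distance in rescaled coordinates, giving a limit problem on $\mathbb{R}^n$ minus finitely many points with standard punctual regularity.
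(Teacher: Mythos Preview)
Your construction of $\mathcal{S}_\alpha$ is essentially the paper's (it quotes the Druet--Hebey selection lemma), and your contradiction setup is close to the paper's Step~1--2, except that the paper defines $\mu_\alpha$ uniformly by
\[
\mu_\alpha^{-n}=u_\alpha^q(y_\alpha)+\Bigl|\tfrac{\nabla u_\alpha}{u_\alpha}\Bigr|^n(y_\alpha)+\Bigl|\tfrac{\nabla^2 u_\alpha}{u_\alpha}\Bigr|^{n/2}(y_\alpha)+|\mathcal{L}_gW_\alpha|_g(y_\alpha),
\]
rather than case by case; this is what makes the maximum property of $G_\alpha$ yield simultaneous control of all four rescaled quantities on $B_0(R)$. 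In particular, the boundedness of $\tilde v_\alpha$ does \emph{not} come from the critical-point inequality (that only controls $u_\alpha$ at critical points), but from the gradient bound $|\nabla\ln\hat v_\alpha|\le 1+o(1)$ implied by the maximum, which integrates to $\hat v_\alpha(0)e^{-2|x|}\le\hat v_\alpha(x)\le\hat v_\alpha(0)e^{2|x|}$.

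The genuine gap is your final step. The displayed line
\[
G_\alpha(y_\alpha)\leq C\Bigl(\tfrac{d_\alpha}{\mu_\alpha}\Bigr)^n\cdot\mu_\alpha^n\cdot\bigl(u_\alpha(y_\alpha)^q+\dots\bigr)=O(1)
\]
is circular: $(d_\alpha/\mu_\alpha)^n\mu_\alpha^n=d_\alpha^n$, so the right-hand side is $d_\alpha^n\mu_\alpha^{-n}=G_\alpha(y_\alpha)$ itself. Bounds on the rescaled quantities at the origin are tautological by the definition of $\mu_\alpha$; they cannot force $G_\alpha(y_\alpha)=O(1)$, and indeed $d_\alpha/\mu_\alpha\to+\infty$ always (there is no second case to handle). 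The paper's contradiction is of an entirely different nature: after showing $|\mathcal{L}_\xi\hat Z_\alpha|\to0$ (Step~3), it passes to the limit in the scalar equation for $w_\alpha=\hat v_\alpha/\hat v_\alpha(0)$ and performs a \emph{classification of the limit profile} (Step~4). Depending on whether $\hat v_\alpha(0)\to\hat l>0$ or $v_\alpha(y_\alpha)^{-1}\to l>0$ (at least one vanishes since $l\hat l=\lim\mu_\alpha^{(n-2)/2}=0$), the limit $w$ is either a Caffarelli--Gidas--Spruck bubble, a constant (via the subharmonic Lemma~\ref{lem: A12}), or harmonic. The bubble case produces a nearby critical point with $d_g(\mathcal S_\alpha,\cdot)=O(\mu_\alpha)$, contradicting $d_\alpha/\mu_\alpha\to\infty$; the constant/harmonic cases force $\nabla w(0)=\nabla^2 w(0)=0$, contradicting the normalization. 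None of this is ``unwinding the scaling.''

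A smaller but real issue is your treatment of $\mathcal{L}_\xi\tilde Z_\alpha$. Elliptic regularity for the Lam\'e operator alone is not enough, because its right-hand side contains $\nabla^2 v_\alpha$, whose control in turn requires bounding $|\mathcal{L}_\xi Z_\alpha|^2$ in the scalar equation. The paper breaks this loop by first applying Green's representation to the \emph{scalar} equation (where $|\mathcal{L}_\xi\hat Z_\alpha|^2/\hat v_\alpha^{q+1}$ appears with a favourable sign) to extract an integral bound $\int|x-y|^{2-n}|\mathcal{L}_\xi\hat Z_\alpha|^2\,dy\le C$; this gives a good boundary radius $s_\alpha$, and only then does the Green representation for $\overrightarrow{\Delta}_\xi$ yield the pointwise estimate $|\mathcal{L}_\xi\hat Z_\alpha|\le C'R\mu_\alpha+C'/R\to0$. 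Your sketch gestures at the right ingredients (Green representation, conformal flatness) but misses this two-step decoupling, and the smallness of $\tilde V$ is actually invoked later, in identifying the limiting \emph{system} with the decoupled Yamabe-type equation.
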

\begin{proof}
\par\textbf{Step $1$: Setting up the proof by contradiction.} For every $\alpha\in\N^*$, we may define the integer $N_\alpha\in\N^*$ and the set of critical points  $$\mathcal{S}_\alpha=(x_{1,\alpha},\dots, x_{N_\alpha,\alpha})$$ of $u_\alpha$ by the following lemma, which holds very generally for any sufficiently regular function.
\begin{lemma}\label{lem: former lemma 12}
Let $u$ be a positive real-valued $\mathcal{C}^2$ function defined in a compact manifold $M$. Then there exists $N\in \N^*$ and $\left(x_1,\, x_2,\dots x_N\right)$ a set of critical points of $u$ such that
\begin{equation}
d_g(x_i,x_j)^{\frac{n-2}{2}}u(x_i)\geq 1
\end{equation}
for all $i,\,j\in\{1, \dots, N\}$, $i\not= j$, and 
\begin{equation}\label{the critical points hypothesis}
\left(\min_{i=1,\dots, N}d_g(x_i,x)\right)^{\frac{n-2}{2}}u(x)\leq 1
\end{equation}
for all critical points $x$ of $u$.
\end{lemma}
The lemma and its proof may be found in Druet and Hebey's paper \cite{DruHeb09}. Let
\begin{equation}\label{scaling invariant quantity}
\begin{array}{r l}
\Psi_\alpha(x)=& \Big(\min_{1\leq i\leq N_\alpha}d_g(x_{i,\alpha},x)\Big)^n  \\
&\times \left(u^q_\alpha(x)+\left\vert\frac{\nabla u_\alpha(x)}{u_\alpha(x)}\right\vert^n+\left\vert\frac{\nabla^2u_\alpha(x)}{u_\alpha(x)}\right\vert^\frac{n}{2}+\vert\mathcal{L}_g W_\alpha\vert_g(x)\right)
\end{array}
\end{equation}
for $x\in M$. Let $(x_\alpha)_\alpha\in M$ be such that
\begin{equation}\label{definition of x alpha}
\Psi_\alpha(x_\alpha)=\sup_M\Psi_\alpha\to\infty
\end{equation}
as $\alpha\to\infty$. 
\par\textbf{Step $2$: Rescaling.} We denote the injectivity radius of $M$ by $i_g(M)$. Let \begin{equation}
0<\delta_\alpha<\frac{1}{2}i_g(M)
\end{equation}
be radii around $x_\alpha$. Since $(M,g)$ is conformally flat, let $\varphi_\alpha$ and $\Phi_\alpha$ be as in previous section
so that $\left(\Phi_\alpha^{-1}\right)^\star g_{ij}=\varphi_\alpha^{q-2}\xi_{ij}$, $\varphi_\alpha(0)=1$ and $\nabla\varphi_\alpha(0)=0$. In fact, these conformal factors can be chosen to be uniformly bounded up in $\mathcal{C}^k$, up to an arbitrary $k>0$. Consider the following rescalings of the conformal factors:
\begin{equation}\label{rescallings2}
\begin{array}{r l} \hat{v}_\alpha(x)=&\mu_\alpha^{\frac{n-2}{2}}\varphi_\alpha(\mu_\alpha x)u_\alpha\circ\Phi_\alpha^{-1}(\mu_\alpha x)\\ 
 \hat{Z}_\alpha(x)=&\mu_\alpha^{n-1}\varphi_\alpha(\mu_\alpha x)^{2-q}\left(\Phi_\alpha\right)_* W_\alpha(\mu_\alpha x),
\end{array}
\end{equation}
where $x\in\Omega_\alpha$, with $\Omega_\alpha:=B_{x_\alpha}\left(\frac{\delta}{\mu_\alpha}\right)$ and
\begin{equation}\label{mu2}
\mu_\alpha^{-n}:=u_\alpha(x_\alpha)^q+\left\vert\frac{\nabla u_\alpha(x_\alpha)}{u_\alpha(x_\alpha)}\right\vert^n+\left\vert\frac{\nabla^2u_\alpha(x_\alpha)}{u_\alpha(x_\alpha)}\right\vert^\frac{n}{2}+\vert\mathcal{L}_g W_\alpha\vert_g(x_\alpha).
\end{equation}
Moreover, because $M$ is compact, and by \eqref{scaling invariant quantity} and \eqref{definition of x alpha},
\begin{equation}\label{est sizes}
\frac{d_g(x_\alpha,\mathcal{S}_\alpha)}{\mu_\alpha}\to\infty\quad\text{and}\quad\mu_\alpha\to 0.
\end{equation}
We consider the rescaled perturbed system corresponding to \eqref{rescallings2},
\begin{equation}\label{E alpha}
\begin{array}{r l}
 \Delta_\xi\hat{v}_\alpha=
& -\mu_\alpha^2\hat{h}_\alpha\hat{v}_\alpha+\hat{f}_\alpha\hat{v}_\alpha^{q-1}+\frac{\hat{a}_\alpha}{\hat{v}_\alpha^{q+1}} -\mu_\alpha^{\frac{n+2}{2}}\frac{\hat{b}_\alpha}{v_\alpha(\mu_\alpha\cdot)} \\ 
& -\mu_\alpha^{\frac{n}{2}}\frac{\langle\nabla\hat{v}_\alpha,\hat{Y}_\alpha\rangle}{\hat{v}_\alpha}\left(\frac{\hat{d}_\alpha}{v_\alpha(\mu_\alpha\cdot)}+\frac{\hat{c}_\alpha}{v_\alpha^{q+1}(\mu_\alpha\cdot)}\right)\\
& -\mu_\alpha^{\frac{n-2}{2}}\frac{\langle\nabla \hat{v}_\alpha,\hat{Y}_\alpha\rangle^2}{\hat{v}^2_\alpha}\frac{1}{v_\alpha^{q+1}(\mu_\alpha\cdot)} \\
 \overrightarrow{\Delta}_\xi \hat{Z}_{\alpha }=& q\mu_\alpha\xi^{kl}\partial_k(\ln \varphi_\alpha)(\mu_\alpha\cdot)\mathcal{L}_\xi \hat{Z}_{\alpha l}
 \\&\quad\quad+\hat{\mathcal{R}}_\alpha(\hat{v}_\alpha,\nabla\hat{v}_\alpha,\nabla^2\hat{v}_\alpha, \mathcal{L}_\xi \hat{Z}_\alpha),
\end{array}
\end{equation}
where
\begin{equation}\label{rescaled parameters}
\begin{array}{c}
\hat{h}_\alpha(x)=\tilde{h}_\alpha(\mu_\alpha x),\quad
\hat{f}_\alpha(x)=\tilde{f}_\alpha(\mu_\alpha x),\\
\hat{\rho}_{1,\alpha}(x)=\tilde{\rho}_{1,\alpha}(\mu_\alpha x),\quad
\hat{\rho}_{2,\alpha}(x)=\tilde{\rho}_{2,\alpha}(\mu_\alpha x),\\
\hat{\Psi}_\alpha(x)=\tilde{\Psi}_\alpha(\mu_\alpha x), \quad
\hat{Y}_\alpha(x)=\tilde{Y}_\alpha(\mu_\alpha x),\\
\hat{a}_\alpha(x)=\tilde{a}_\alpha(\mu_\alpha x),\\
\hat{c}_\alpha(x)=\tilde c_\alpha(\mu_\alpha x),\quad
\hat{d}_\alpha(x)=\tilde{d}_\alpha(\mu_\alpha x).
\end{array}
\end{equation}
and 
\begin{equation}
\begin{array}{r l}
 \hat{\mathcal{R}}(\hat{v}_\alpha,\nabla\hat{v}_\alpha,\nabla^2\hat{v}_\alpha,\mathcal{L}_\xi Z_\alpha)\leq & C_\mathcal{R}'\Big(\mu_\alpha^{n+1}+\mu_\alpha^n\left\vert\frac{\nabla \hat{v}_\alpha}{\hat{v}_\alpha}\right\vert+\mu_\alpha^{n-1} \left\vert\frac{\nabla \hat{v}_\alpha}{\hat{v}_\alpha}\right\vert^2\\
&\quad\quad +\mu_\alpha^{n-1}\left\vert\frac{\nabla^2\hat{v}_\alpha}{\hat{v}_\alpha}\right\vert+\mu_\alpha\vert\mathcal{L}_\xi \hat{Z}_\alpha \vert\Big).
\end{array}
\end{equation}
where $\mathcal{C}'_\mathcal{R}$ is a constant.

\medskip
By the definition \eqref{mu2},
\begin{equation}\label{rescaled mu}
\hat{v}_\alpha(0)^q+\left\vert\frac{\nabla \hat{v}_\alpha(0)}{\hat{v}_\alpha(0)}\right\vert^n+\left\vert\frac{\nabla^2\hat{v}_\alpha(0)}{\hat{v}_\alpha(0)}\right\vert^\frac{n}{2}+\vert\mathcal{L}_\xi\hat{Z}_\alpha\vert_\xi(0)=1
\end{equation}
and for any $R>0$, 
\begin{equation}\label{rescaled est}
\sup_{x\in B_0(R)}\left(\hat{v}_\alpha^q(x)+\left\vert\frac{\nabla \hat{v}_\alpha(x)}{\hat{v}_\alpha(x)}\right\vert^n+\left\vert\frac{\nabla^2\hat{v}_\alpha(x)}{\hat{u}_\alpha(x)}\right\vert^\frac{n}{2}+\left\vert\mathcal{L}_\xi\hat{Z}_\alpha(x)\right\vert_\xi\right)\leq 1+o(1)
\end{equation}
and thereby
\begin{equation}
\sup_{B_0(R)}\vert\nabla \ln\hat{v}_\alpha(x)\vert\leq 1 + o(1)
\end{equation}
for all $R>0$. As a consequence,
\begin{equation}
\hat{v}_\alpha(0)e^{-2|x|}\leq \hat{v}_\alpha(x)\leq \hat{v}_\alpha(0)e^{2|x|}.
\end{equation}
\par\textbf{Step $3$: $\vert \mathcal{L}_\xi \hat{Z}_\alpha\vert$ converges to zero.} By Green's representation formula applied to the first equation of \eqref{E alpha} on $B_x(3R)$, we get
\begin{equation}\label{green formula for u hat}
\begin{array}{r l}
 \hat{v}_\alpha(x)& \geq \int_{B_x(3R)}\mathcal{G}_{3R}(x,y)\Big[ \frac{\hat{a}_\alpha(y)}{\hat{v}_\alpha^{q+1}(y)} -\mu_\alpha^2\hat{h}_\alpha(y)\hat{v}_\alpha(y)-\mu_\alpha^{\frac{n+2}{2}}\frac{\hat{b}_\alpha(y)}{v_\alpha(\mu_\alpha y)}  \\
& \quad\quad\quad\quad\quad -\mu_\alpha^{\frac{n}{2}} \frac{\langle\nabla\hat{v}_\alpha(y),\hat{Y}_\alpha(y)\rangle}{\hat{v}_\alpha(y)}\left(\frac{\hat{d}_\alpha(y)}{v_\alpha(\mu_\alpha y)}+\frac{\hat{c}_\alpha(y)}{v_\alpha^{q+1}(\mu_\alpha y)}\right)\\
& \quad\quad\quad\quad\quad -\mu_\alpha^{\frac{n-2}{2}} \frac{\langle\nabla \hat{v}_\alpha(y),\hat{Y}_\alpha(y)\rangle^2}{\hat{v}_\alpha(y)}\frac{1}{v_\alpha^{q+1}(\mu_\alpha y)}\Big] \,dy.\\
\end{array}
\end{equation}
Here, $\mathcal{G}_{3R}(x,y):=\frac{1}{(n-2)\omega_{n-1}}\left(\vert x-y\vert^{2-n}-(3R)^{2-n}\right)$. By taking $\alpha$ large, we get the bulk integral estimate
\begin{equation}
\int_{B_x(2R)}|x-y|^{2-n}|\mathcal{L}_\xi\hat{Z}_\alpha|^2_\xi(y)\,dy\leq C,
\end{equation}
where $C$ a positive constant independent of $R$ or $\alpha$. Therefore, we may find $s_\alpha\in(\frac{3}{2}R,2R)$ such that the boundary estimate 
\begin{equation}
\int_{\partial B_0(s_\alpha)}\vert\mathcal{L}_\xi \hat{Z}_\alpha\vert^2_\xi(y)\,d\sigma(y)\leq CR^{n-3}
\end{equation}
holds. Moreover,
\begin{equation}
\left\vert q\mu_\alpha\xi^{kl}\partial_k\left(\ln \varphi_\alpha\right)(\mu_\alpha\cdot)\left(\mathcal{L}_\xi\hat{Z}_\alpha\right)_{li}\right\vert\leq C\mu_\alpha^2\vert y\vert \left\vert\mathcal{L}_\xi\hat{Z}_\alpha\right\vert_\xi.
\end{equation}
Turning to the second equation of \eqref{E alpha}, we use the Green representation formula for the Lam\'e type operator $\overrightarrow{\Delta}_\xi$ in $B_0(2R)$. This yields
\begin{equation}\label{the green function approximation}
\begin{array}{r l}
 \left\vert\mathcal{L}_\xi\hat{Z}_\alpha\right\vert_\xi(x)
 \leq &  C\int_{B_0(s_\alpha)}\vert x-y\vert^{1-n}\left\vert \overrightarrow{\Delta}_\xi \hat{Z}_\alpha\right\vert\,dy \\ 
& + C\int_{\partial B_0(s_\alpha)}\vert x-y\vert^{1-n}\left\vert\mathcal{L}_\xi\hat{Z}_\alpha\right\vert_\xi(y)\,d\sigma (y)\\ 
\leq&  C'R\mu_\alpha+\frac{C'}{R}.
\end{array}
\end{equation}
for positive constants $C$ and $C'$. We therefore get an improvement on the pointwise estimate of the rescaled $\hat{W}_\alpha$ from \eqref{rescaled est}:
\begin{equation}\label{saving estimate}
\left\vert\mathcal{L}_\xi \hat{Z}_\alpha\right\vert_\xi\to 0
\end{equation}
and 
\begin{equation}\label{a est}
\hat{a}_\alpha\to 0
\end{equation}
in $\mathcal{C}^0_{loc}(\R^n)$ as $\alpha\to\infty$.
\par\textbf{Step $4$: The study of potential blow-up profiles.} We turn to the study of the remaining terms of \eqref{mu2}. From (\ref{rescaled mu}) and (\ref{saving estimate}), we deduce that
\begin{equation}\label{bounds on u hat}
\lim_{\alpha\to\infty} \left( \hat{v}^q_\alpha(0)+\left\vert\frac{\nabla \hat{v}_\alpha (0)}{\hat{v}_\alpha(0)}\right\vert^n+\left\vert\frac{\nabla^2\hat{v}_\alpha(0)}{\hat{v}_\alpha(0)}\right\vert^\frac{n}{2}\right)= 1.
\end{equation}  
Let us set
\begin{equation}\label{def w alpha2}
w_\alpha(x):=\frac{\hat{v}_\alpha(x)}{\hat{v}_\alpha(0)}=\frac{u_\alpha(\exp_{x_\alpha}(\mu_\alpha x))}{u_\alpha(x_\alpha)}.
\end{equation}
It follows that
\begin{equation}\label{w cst}w_\alpha(0)=1\quad\text{and }\quad
\lim_{\alpha\to\infty}\left( \left\vert\frac{\nabla w_\alpha(0)}{w_\alpha(0)}\right\vert^n+\left\vert\frac{\nabla^2 w_\alpha(0)}{w_\alpha(0)} \right\vert^{\frac{n}{2}}\right) \leq 1
\end{equation}
and therefore
\begin{equation}
e^{-2\vert x\vert}\leq w_\alpha(x)\leq e^{2\vert x\vert}
\end{equation}
We divide the first equation of system (\ref{E alpha}) by $\hat{u}_\alpha(0)$ and obtain
\begin{equation*}\label{w eq}
    \begin{array}{r l}
     \Delta_\xi w_\alpha=     & -\mu_\alpha^2\hat{h}_\alpha w_\alpha + \hat{f}_\alpha w_\alpha^{q-1}\hat{v}_\alpha^{q-2}(0)   \\
     &+\frac{\mu_\alpha^{\frac{n+2}{2}}\hat{\rho}_{1,\alpha}(x)}{\hat{v}^{q+1}_\alpha(x)v_\alpha(x_\alpha)}+\frac{\vert\mu_\alpha^{\frac{n+2}{4}}\hat{\Psi}_{\alpha}(x)+\hat{\rho}_{2,\alpha}(x)\mathcal{L}_\xi \hat{Z}_{\alpha}(x)\vert^2_\xi}{\hat{v}^{q+1}_\alpha(x)v_\alpha(x_\alpha)}
     \\
         & -\mu_\alpha^2\frac{\hat{b}_\alpha}{v_\alpha\left(\exp_{x_\alpha}(\mu_\alpha\cdot)\right)v_\alpha(x_\alpha)}
          -\frac{\langle\nabla w_\alpha,\hat{Y}_\alpha\rangle^2}{w_\alpha^{2}}\frac{1}{v_\alpha(x_\alpha)v_\alpha^{q+1}(\mu_\alpha\cdot)}\\
         & -\mu_\alpha \langle\nabla w_\alpha,\hat{Y}_\alpha\rangle\left(\frac{\hat{d}_\alpha }{v_\alpha\left(\exp_{x_\alpha}(\mu_\alpha\cdot)\right)v_\alpha(x_\alpha)} 
          +\frac{\hat{c}_\alpha}{v_\alpha^{q+1}\left(\exp_{x_\alpha}(\mu_\alpha\cdot)\right)v_\alpha(x_\alpha)}\right).
    \end{array}
\end{equation*}
Up to a subsequence, we denote
\begin{equation}
\begin{array}{r l}
 \hat{l}_\alpha=\hat{v}_\alpha(0), &\quad\text{with}\quad \lim_{\alpha\to\infty}\hat{l}_\alpha=:\hat{l}\in [0,1],\\
 l_\alpha=v_\alpha^{-1}(x_\alpha),&\quad\text{with}\quad \lim_{\alpha\to\infty}l_\alpha=:l\in [0,\varepsilon^{-1}]
\end{array}
\end{equation}
which follows from \eqref{rescallings2} and \eqref{bounds on u hat} in the case of the first limit, and from \eqref{varepsilon2} for the second. Furthermore, \eqref{mu2} implies that
\begin{equation}\label{diff cases2}
l\hat{l}=\lim_{\alpha\to\infty}\mu_\alpha^{\frac{n-2}{2}}=0.
\end{equation}
\begin{rem}
It is here that we use the hypothesis $V_\alpha(x_\alpha)\to 0$.
\par We denote
\begin{equation}
\frac{\mathcal{L}_\xi\hat{Z}_\alpha(x)}{\hat{l}^{\frac{q+2}{2}}_\alpha}\to \mathcal{L}_\xi Z.
\end{equation}
\par By standard elliptic theory, we find that there exists $w:=\lim_{\alpha\to\infty}w_\alpha$ in $\mathcal{C}^{1,\eta}_{\text{loc}}(\R^n)$, and by dividing the first equation by $\hat{l}$, we obtain
\begin{equation}
\begin{array}{r l}
\Delta w=& \frac{n-2}{4(n-1)}\left(2V(\psi(x_0))-\frac{n-1}{n}\tau^{*2}\right)w^{q-1}\hat{l}^{q-2} +\frac{n-2}{16(n-1)}\frac{\tilde{N}^2(x_0)\vert\mathcal{L}_\xi Z\vert^2_\xi}{w^{q+1}}\\
 &-\frac{n}{n-2}\frac{\langle\nabla w, \tilde{N}(x_0)\tilde{V}(x_0)\rangle^2}{w^{q+3}}l^{\,q+2}\\
 \overrightarrow{\Delta}_\xi Z=&-2\frac{n-1}{n+1}\frac{\langle\tilde{V}(x_0),\nabla w\rangle\nabla w}{w^2}l^{\frac{q+2}{2}}-\frac{n-1}{n}\langle\tilde{V}(x_0),\frac{\nabla^2 w}{w}\rangle l^{\frac{q+2}{2}}.
\end{array}
\end{equation}
Since $\tilde{V}(x_0)=0$, we obtain $\mathcal{L}_\xi Z=0$. Had we not imposed this hypothesis, the next step would have been to classify the solutions of the second equation. To our knowledge, this is an open problem.
\end{rem}
Therefore, the limit equation becomes:
$$\Delta w= f(x_0)w^{q-1}\hat{l}^{q-2}.$$
In fact, we can easily tackle the slightly more general equation 
\begin{equation}\label{limiting equation2}
\Delta w=f(x_0)w^{q-1}\hat{l}^{q-2}-\frac{\langle\nabla w, Y(x_0)\rangle^2}{w^{q+3}}l^{\,q+2},\quad x\in\R^n,
\end{equation}
even if $Y(x_0)\not=0$. Based on the observation \eqref{diff cases2}, we consider three separate cases.
\par\textbf{First case.} Let 
\begin{equation}\label{l zero l hat non zero2}
l=0\quad\text{ and }\hat{l}\not=0.
\end{equation}
Then by passing to the limit in the first equation of (\ref{E alpha}), we obtain
\begin{equation}\label{first limit eq2}
\Delta U=f(x_0)U^{q-1}
\end{equation}
in $\R^n$. The exact form of the solutions of this equation is known, thanks to the work of Caffarelli, Gidas and Spruck \cite{CafGidSpr89}:
\begin{equation}
U(x)=\left(1+\frac{f(x_0)|x-y_0|^2}{n(n-2)}\right)^{1-\frac{n}{2}}\quad\text{ or }\quad U\equiv 0,
\end{equation}
\par If $U$ is non-trivial, with $y_0\in \R^n$ the unique maximum point, there exist $(y_\alpha)_\alpha$ local maxima of $(v_\alpha)_\alpha$ approaching $y_0$ such that 
\begin{equation}\label{est: blow up analysis, distances2}
    d_g(x_\alpha,y_\alpha)=O(\mu_\alpha)
\end{equation}
and
\begin{equation}\label{est lim 12}
    \mu_\alpha^{\frac{n-2}{2}}v_\alpha(y_\alpha)\to 1\quad \text{ as }\quad \alpha\to\infty.
\end{equation}
Since $(y_\alpha)_\alpha$ are critical points, the hypothesis \eqref{the critical points hypothesis} implies that
$$d_g(\mathcal{S}_\alpha, y_\alpha)^{\frac{n-2}{2}}v_\alpha(y_\alpha)\leq 1$$
for all $\alpha\in \N$, so by (\ref{est lim 12}), $\displaystyle d_g(\mathcal{S}_\alpha,y_\alpha)=O(\mu_\alpha)$; together with (\ref{est: blow up analysis, distances2}), the triangle inequality implies $\displaystyle d_g(\mathcal{S}_\alpha,x_\alpha)=O(\mu_\alpha)$, which contradicts (\ref{est sizes}).
\par If $U\equiv 0$, then 
\begin{equation}
\lim_{\alpha\to\infty}\hat{v}_\alpha(0)=0,
\end{equation} 
which contradicts \eqref{l zero l hat non zero2}.
\par\textbf{Second case.} Let 
\begin{equation}\label{l non zero l hat zero2}
l\not=0\quad\text{ and }\quad\hat{l}=0.
\end{equation}
Since $l\not=0$, thanks to \eqref{varepsilon2} and \eqref{def w alpha2}, $w$ is bounded from below by a constant,
\begin{equation}
w\geq \varepsilon l.
\end{equation}
Note also that \eqref{limiting equation2} implies that $w$ is subharmonic and that
\begin{equation}
\Delta w^{-\alpha}\leq \alpha\frac{\vert\nabla w\vert^2}{w^{\alpha+2}}\left[\frac{\vert Y(x_0)\vert^2}{\varepsilon^{q+2}}-(\alpha+1)\right],
\end{equation}
so $w^{-\alpha}$ is subharmonic for $\alpha$ large. By applying Lemma \ref{lem: A12} (see the Annex), we deduce that $w$ is constant, in contradiction with \eqref{w cst}.
\par\textbf{Third case.} Let 
\begin{equation}\label{l zero l hat zero2}
l=0\quad\text{ and }\quad\hat{l}=0,
\end{equation}
then $w$ is a non-negative harmonic function on $\R^n$. Thus, $w=cst$ and furthermore, by \eqref{w cst}, $w\equiv 1$. But $\hat{l}=0$ implies that 
\begin{equation}
\left\vert\frac{\nabla w(0)}{w(0)}\right\vert^n+\left\vert\frac{\nabla^2 w(0)}{w(0)}\right\vert^\frac{n}{2}=1.
\end{equation}
which leads to a contradiction.
\end{proof}

\section{Asymptotic analysis}
In this section, we assume that $(u_\alpha,W_\alpha)_{\alpha\in\N}$ is an $L^\infty$ blow-up sequence, \textit{i.e.} we ask that there exist a sequence $(x_\alpha)_{\alpha\in\N}$ of critical points of $(u_\alpha)_{\alpha\in\N}$ and a series of positive real numbers $(\rho_\alpha)_{\alpha\in\N}$, where
\begin{equation}
0<\rho_\alpha<\frac{1}{16}i_g(M),
\end{equation}
such that
\begin{equation}\label{est: blow up no 22}
\rho_\alpha^n\sup_{B_{x_\alpha}(8)}u_\alpha^q(\rho_\alpha x)\to \infty\quad\text{ as }\alpha\to\infty,
\end{equation}
and moreover we ask that 
\begin{equation}\label{est: blow up no 32}
\begin{array}{c}
d_g(x_\alpha,x)^n\left(u_\alpha^q(x)+\left|\frac{\nabla u_\alpha(x)}{u_\alpha(x)}\right|^n+\left\vert\frac{\nabla^2u_\alpha(x)}{u_\alpha(x)}\right\vert^\frac{n}{2}+|\mathcal{L}_g W_\alpha|_g(x)\right)\leq C, \\
\quad\quad\quad\quad\quad\quad\quad\quad\quad\quad\quad\quad\quad\quad\quad\quad x\in B_{x_\alpha}(8\rho_\alpha).
\end{array}
\end{equation} 
In the reminder of this section, we assume that $\left(u_\alpha, W_{\alpha}\right)_{\alpha\in\N}$  is a blow-up sequence, and we look at the kind of asymptotic profiles we can potentially obtain. At the very end, we rule all of them out, and thus obtain our compactness result. Note that, if we were to assume that \eqref{est: blow up no 22} holds for a sequence $(x_\alpha)_\alpha$ in $\mathcal{S}_\alpha$, with $\rho_\alpha$ smaller than the distance of $x_\alpha$ to any other point in $\mathcal{S}_\alpha$, then \eqref{est: blow up no 32} holds as well. 
\subsection{Harnack inequality}
The following is a Harnack-type inequality. It is a direct consequence of the weak estimate and it plays a key role in ruling out clusters of bubbles where some are much larger than others.
\begin{lemma}\label{harnack lemma}
Let $(u_\alpha, \rho_\alpha)_\alpha$ be a blow-up sequence such that \eqref{est: blow up no 22} and \eqref{est: blow up no 32} hold. Then there exists a constant $C_3>1$ such that for any sequence $0<s_\alpha\leq\rho_\alpha$, we get
\begin{equation}
s_\alpha^2\vert\vert\nabla^2 u_\alpha \vert\vert_{L^\infty(\Omega_\alpha)}+s_\alpha\vert\vert\nabla u_\alpha\vert\vert_{L^\infty(\Omega_\alpha)}\leq C_3\sup_{\Omega_\alpha}u_\alpha\leq C_3^2\inf_{\Omega_\alpha}u_\alpha,
\end{equation}
where $\Omega_\alpha=B_{x_\alpha}(6s_\alpha)\backslash B_{x_\alpha}(\frac{1}{6}s_\alpha).$
\end{lemma}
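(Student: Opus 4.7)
The plan is to read off both estimates directly from the weak pointwise bound \eqref{est: blow up no 32}, exploiting the fact that on the annulus $\Omega_\alpha$ every point $x$ has $d_g(x_\alpha,x)$ comparable to $s_\alpha$. Since $s_\alpha \le \rho_\alpha$, the annulus $\Omega_\alpha$ sits inside $B_{x_\alpha}(8\rho_\alpha)$, where the hypothesis \eqref{est: blow up no 32} applies. For any $x \in \Omega_\alpha$ one has $\tfrac{s_\alpha}{6} \le d_g(x_\alpha,x) \le 6 s_\alpha$, so \eqref{est: blow up no 32} yields the pointwise bounds
\begin{equation*}
\left|\frac{\nabla u_\alpha(x)}{u_\alpha(x)}\right| \le \frac{C}{s_\alpha}, \qquad \left|\frac{\nabla^2 u_\alpha(x)}{u_\alpha(x)}\right| \le \frac{C}{s_\alpha^2}
\end{equation*}
for a constant $C$ independent of $\alpha$ and $s_\alpha$.

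From here, the first inequality of the lemma is immediate: pointwise we have $|\nabla u_\alpha(x)| \le C s_\alpha^{-1} u_\alpha(x) \le C s_\alpha^{-1} \sup_{\Omega_\alpha} u_\alpha$, and the analogous bound holds for $\nabla^2 u_\alpha$ with $s_\alpha^{-2}$. Taking the sup over $\Omega_\alpha$ gives
\begin{equation*}
s_\alpha^2 \|\nabla^2 u_\alpha\|_{L^\infty(\Omega_\alpha)} + s_\alpha \|\nabla u_\alpha\|_{L^\infty(\Omega_\alpha)} \le C_3 \sup_{\Omega_\alpha} u_\alpha
\end{equation*}
with $C_3$ depending only on the constant $C$ in \eqref{est: blow up no 32}.

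The second inequality is the genuinely Harnack-type statement, and the key observation is that $|\nabla \ln u_\alpha| \le C/s_\alpha$ throughout $\Omega_\alpha$. Since $u_\alpha > 0$ and the annulus $\Omega_\alpha = B_{x_\alpha}(6s_\alpha) \setminus B_{x_\alpha}(\tfrac{1}{6} s_\alpha)$ is connected, any two points $x, y \in \Omega_\alpha$ can be joined by a smooth path $\gamma$ entirely contained in $\Omega_\alpha$ of length bounded by a universal multiple of $s_\alpha$ (for instance by going radially to the middle sphere of radius $s_\alpha$, travelling along it, then radially back). Integrating the gradient bound along $\gamma$ gives
\begin{equation*}
\bigl| \ln u_\alpha(x) - \ln u_\alpha(y) \bigr| \le \int_\gamma |\nabla \ln u_\alpha| \, d\sigma \le \frac{C}{s_\alpha} \cdot C' s_\alpha \le C'',
\end{equation*}
so that $u_\alpha(x) \le e^{C''} u_\alpha(y)$ for all $x, y \in \Omega_\alpha$. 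Taking the supremum in $x$ and the infimum in $y$ (and enlarging the constant $C_3$ from the first step if necessary) yields $\sup_{\Omega_\alpha} u_\alpha \le C_3^2 \inf_{\Omega_\alpha} u_\alpha$, which completes the proof.

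There is no real obstacle here — the entire content is packaged inside the weak estimate of Lemma \ref{lem: first estimate}. The only points requiring a bit of care are the verification that $\Omega_\alpha \subset B_{x_\alpha}(8\rho_\alpha)$ so that \eqref{est: blow up no 32} is applicable, and the geometric fact that within a conformally flat background, paths connecting any two points of the annulus can be chosen of length $O(s_\alpha)$ uniformly in $\alpha$. This uniformity follows from the locally conformally flat structure, which, on the scale $s_\alpha \le \rho_\alpha < \tfrac{1}{16} i_g(M)$, gives a bilipschitz comparison between $g$ and the Euclidean metric in these coordinates.
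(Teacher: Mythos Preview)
Your proof is correct and follows essentially the same approach as the paper: both extract the pointwise bounds $|\nabla u_\alpha|/u_\alpha \le C/s_\alpha$ and $|\nabla^2 u_\alpha|/u_\alpha \le C/s_\alpha^2$ on $\Omega_\alpha$ directly from the weak estimate \eqref{est: blow up no 32}, and then integrate the resulting bound on $|\nabla \ln u_\alpha|$ along a path of length $O(s_\alpha)$ inside the annulus to obtain the Harnack inequality. The paper's version is slightly terser (it just invokes a curve between a maximum and a minimum of $u_\alpha$ and bounds its length by $7s_\alpha$), but the argument is the same.
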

 
\begin{rem}
 When considering a rescaling of the type
\begin{equation}
\bar{u}(x)=s_\alpha^{\frac{n-2}{2}}u_\alpha(\exp_{x_\alpha}(s_\alpha x)),
\end{equation}
and $\bar{\Omega}_\alpha=B_0(6)\setminus B_0(\frac{1}{6})$, then the above lemma gives
\begin{equation}\label{harnack inequality}
\vert\vert\nabla^2\bar{u}_\alpha\vert\vert_{L^\infty(\bar{\Omega}_\alpha)}+\vert\vert \nabla \bar{u}_\alpha\vert\vert_{L^\infty(\bar{\Omega}_\alpha)}\leq C_3\sup_{\bar\Omega_\alpha}\bar{u}_\alpha\leq C_3^2\inf_{\bar\Omega_\alpha}\bar{u}_\alpha.
\end{equation}
\end{rem}
\begin{proof}[Proof of Lemma \eqref{harnack lemma}:]
Estimate (\ref{mu2}) implies that
\begin{equation}\label{e12}
\left|\frac{\nabla u_\alpha(x)}{u_\alpha(x)}\right|\leq C_2 d_g(x_\alpha,x)^{-1}\quad\text{in }\Omega_\alpha,
\end{equation}
and therefore
\begin{equation}\label{e22}
s_\alpha|\nabla\ln u_\alpha(x)|\leq 6C_2\quad\text{in }\Omega_\alpha.
\end{equation}
Similarly, it holds true that
\begin{equation}
s_\alpha^2|\nabla^2 \ln u_\alpha(x)|\leq 6C_2\quad\text{in }\Omega_\alpha.
\end{equation}
Taking $C_3\geq 6C_2$, we get the first inequality from \eqref{harnack inequality}.
Then, from (\ref{e22}) and from the fact that the domain is an annulus  $\Omega_\alpha=B_{x_\alpha}(6s_\alpha)\backslash B_{x_\alpha}(\frac{1}{6}s_\alpha)$, we estimate that
$$\sup_{\Omega_\alpha}\ln u_\alpha- \inf_{\Omega_\alpha}\ln u_\alpha\leq l_\alpha(\Omega_\alpha)||\nabla \ln u_\alpha||_{L^\infty(\Omega_\alpha)}\leq 42C_2,$$
where $l_\alpha(\Omega_\alpha)$ is the infimum of the length of a curve in $\Omega_\alpha$ drawn between a maximum and a minimum of $u_\alpha$.  Equivalently
$$\sup_{\Omega_\alpha} u_\alpha\leq e^{42C_2}\inf_{\Omega_\alpha} u_\alpha,$$
so it suffices to take $C_3=e^{42C_2}$.
\end{proof}
Let $(B_{x_\alpha}(16),\Phi_\alpha)$ be a conformal chart around $x_\alpha$. We study the blow-up sequence in a Euclidean framework through these charts. By the properties we've imposed on $\varphi_\alpha$, 
\begin{equation}
\left|q\xi^{kl}\partial_k\left(\ln \varphi_\alpha\right)\left(\mathcal{L}_\xi Z_\alpha\right)_{li}\right|\leq C|y||\mathcal{L}_\xi Z_\alpha|_\xi,\quad\text{on }B_0(8\rho_\alpha).
\end{equation}
By the definition of a blow-up sequence, we also get that
\begin{equation}\label{weak}
|x|^n\left( v_\alpha^q(x) +\left\vert\frac{\nabla v_\alpha(x)}{v_\alpha(x)}\right\vert^n+\left\vert\frac{\nabla^2v_\alpha(x)}{v_\alpha(x)}\right\vert^{\frac{n}{2}}+\vert\mathcal{L}_\xi Z_\alpha(x)\vert\right)\leq C.
\end{equation}

\subsection{Strong estimate on $v_\alpha$ in $B_0(\mu_\alpha)$}
The following result is a strong estimate on the size of a blow-up sequence in a very small ball $B_0(\mu_\alpha)$.
\begin{lemma}\label{first blow up}
Let $(u_\alpha, W_\alpha)_{\alpha\in\N}$ be a blow-up sequence. Let
\begin{equation}\label{mu v}
\mu_\alpha^{1-\frac{n}{2}}:=u_\alpha(x_\alpha)=v_\alpha(0).
\end{equation}
Up to a subsequence, we have 
\begin{equation}\label{mu and rho}
\mu_\alpha\to 0\quad\text{ and }\quad\frac{\rho_\alpha}{\mu_\alpha}\to\infty.
\end{equation}
Moreover, we see that
\begin{equation}\label{v mu}
\mu_\alpha^{\frac{n-2}{2}}v_\alpha(\mu_\alpha x)\to U(x)\quad\text{ in }\mathcal{C}^{2,\eta}_{loc}(\R^n)\quad\text{ as }\alpha\to\infty
\end{equation}
and
\begin{equation}\label{Z mu}
\mu_\alpha^n\vert\mathcal{L}_\xi Z_\alpha\vert_\xi(\mu_\alpha x)\to 0\quad\text{ and }\mathcal{C}^{0}_{loc}(\R^n)\quad\text{ as }\alpha\to\infty.
\end{equation}
We have denoted
\begin{equation}
x_0=\lim_{\alpha\to\infty}x_\alpha
\end{equation} 
and
\begin{equation}
U(x)=\left(1+\frac{f_0(x_0)}{n(n-2)}\vert x\vert^2\right)^{1-\frac{n}{2}}.
\end{equation}
\end{lemma}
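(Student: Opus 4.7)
The proof follows the blow-up methodology of Lemma \ref{lem: first estimate}, now specialised to the preferred scale $\mu_\alpha$ fixed by \eqref{mu v}. I would start by establishing \eqref{mu and rho}. Since $x_\alpha$ is a critical point of $u_\alpha$ and the Harnack inequality \eqref{harnack inequality} provides a uniform two-sided comparison of $u_\alpha$ on dyadic annuli, the blow-up hypothesis \eqref{est: blow up no 22} propagates inwards to force $u_\alpha(x_\alpha)\to\infty$, hence $\mu_\alpha\to 0$. If $\rho_\alpha/\mu_\alpha$ remained bounded, the weak pointwise bound \eqref{weak} on $B_{x_\alpha}(8\rho_\alpha)$ would yield $\rho_\alpha^n\sup u_\alpha^q \leq C$, contradicting \eqref{est: blow up no 22}; so $\rho_\alpha/\mu_\alpha\to\infty$.

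Next, I would introduce the natural rescalings
$$\tilde v_\alpha(x) := \mu_\alpha^{\frac{n-2}{2}} v_\alpha(\mu_\alpha x), \qquad \tilde Z_\alpha(x) := \mu_\alpha^{n-1} Z_\alpha(\mu_\alpha x),$$
so that $\mathcal{L}_\xi\tilde Z_\alpha(x)=\mu_\alpha^n(\mathcal{L}_\xi Z_\alpha)(\mu_\alpha x)$. These live on $B_0(c\rho_\alpha/\mu_\alpha)$, a domain exhausting $\R^n$ by \eqref{mu and rho}. From \eqref{mu v} together with $\varphi_\alpha(0)=1$, $\nabla\varphi_\alpha(0)=0$ and $\nabla u_\alpha(x_\alpha)=0$, one has $\tilde v_\alpha(0)=1$ and $\nabla\tilde v_\alpha(0)=0$. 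The weak estimate \eqref{weak} rescales to $|x|^n(\tilde v_\alpha^q+|\mathcal{L}_\xi\tilde Z_\alpha|_\xi)\leq C$, and combined with \eqref{harnack inequality} this produces a uniform two-sided bound on $\tilde v_\alpha$ on every compact subset of $\R^n$.

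Substituting into \eqref{conformal system} and tracking powers of $\mu_\alpha$, each coefficient evaluated at $\mu_\alpha x$ tends to its value at $x_0=\lim x_\alpha$, and every term in the scalar equation other than $\tilde f_\alpha\tilde v_\alpha^{q-1}$ carries a strictly positive power of $\mu_\alpha$ (using $v_\alpha\geq\varepsilon'$ to absorb the $1/v_\alpha^k$ denominators and the smallness of $\tilde Y_\alpha$ for the drift terms). The one genuinely delicate contribution is the nonlocal piece $|\tilde\Psi_\alpha+\tilde\rho_{2,\alpha}\mathcal{L}_\xi Z_\alpha|^2/\tilde v_\alpha^{q+1}$, which I would handle by repeating Step 3 of the proof of Lemma \ref{lem: first estimate}: Green's formula for the scalar equation on $B_0(3R)$ yields a bulk $L^2$-bound on $|\mathcal{L}_\xi\tilde Z_\alpha|$; one picks a good sphere $\partial B_0(s_\alpha)$ with $s_\alpha\in(3R/2,2R)$ carrying the corresponding boundary bound; and inserting both into Green's representation for $\overrightarrow{\Delta}_\xi$ on $B_0(s_\alpha)$ with Neumann data gives $|\mathcal{L}_\xi\tilde Z_\alpha|_\xi\leq C(R\mu_\alpha+R^{-1})$, which establishes \eqref{Z mu}.

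With the nonlocal term eliminated, standard elliptic regularity yields $\tilde v_\alpha\to U$ in $\mathcal{C}^{2,\eta}_{loc}(\R^n)$, where $U$ is a positive solution of $\Delta U=f(x_0)U^{q-1}$ on $\R^n$ with $U(0)=1$ and $\nabla U(0)=0$. The Caffarelli-Gidas-Spruck classification \cite{CafGidSpr89} then identifies $U$ with the stated standard bubble, giving \eqref{v mu}. The main obstacle is the uniform-in-$\alpha$ Green's representation for $\overrightarrow{\Delta}_\xi$; this is precisely where the locally conformally flat hypothesis on $g$ enters, ensuring that the kernel dimension of the rescaled Lam\'e operator agrees with its Euclidean counterpart, as flagged in the remarks following Theorem \ref{stability syst2}.
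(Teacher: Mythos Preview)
Your overall strategy---rescale, kill the $\mathcal{L}_\xi Z_\alpha$ contribution via the Green's representation of Lemma~\ref{lem: first estimate}, pass to the limit, and invoke Caffarelli--Gidas--Spruck---is the right one and matches the paper. The gap is in your first step, where you rescale directly by $\mu_\alpha$ and claim uniform two-sided bounds on $\tilde v_\alpha$ from the weak estimate together with Harnack.

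The weak estimate $d_g(x_\alpha,x)^n\bigl(u_\alpha^q+\dots\bigr)\le C$ is vacuous at $x=x_\alpha$ and gives no control near the centre; the Harnack inequality of Lemma~\ref{harnack lemma} only compares values on annuli $B(6s)\setminus B(s/6)$, and chaining dyadic annuli toward~$0$ multiplies the constant unboundedly. So neither ``propagates inwards'' to reach $x_\alpha$. In particular your argument for $\rho_\alpha/\mu_\alpha\to\infty$ fails: if the supremum of $u_\alpha^q$ is attained at some $y_\alpha$ with $d_g(x_\alpha,y_\alpha)\ll\rho_\alpha$, the weak bound only gives $\rho_\alpha^n u_\alpha^q(y_\alpha)\le C\bigl(\rho_\alpha/d_g(x_\alpha,y_\alpha)\bigr)^n$, which is not bounded; and nothing ties $u_\alpha(y_\alpha)$ to $u_\alpha(x_\alpha)$, since $x_\alpha$ is merely a critical point, not a maximum. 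Without a uniform bound on $\tilde v_\alpha$ near~$0$, your Green's argument for $\mathcal{L}_\xi\tilde Z_\alpha$ also breaks: the upper bound on $\tilde v_\alpha(x)$ on the left-hand side and the control of the lower-order terms on the right both rely on the a~priori boundedness you have not established.

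The paper fixes precisely this by introducing an auxiliary scale
\[
\nu_\alpha^{-n}:=\sup_{B_{x_\alpha}(8\rho_\alpha)}\Bigl(u_\alpha^q+\bigl|\tfrac{\nabla u_\alpha}{u_\alpha}\bigr|^n+\bigl|\tfrac{\nabla^2 u_\alpha}{u_\alpha}\bigr|^{n/2}+|\mathcal{L}_g W_\alpha|\Bigr),
\]
attained at some $y_\alpha$. By construction the $\nu_\alpha$-rescalings are bounded by~$1$ everywhere on $\Omega_\alpha$, the blow-up hypothesis gives $\rho_\alpha/\nu_\alpha\to\infty$ and $\nu_\alpha\to 0$, and the weak estimate forces $d_g(x_\alpha,y_\alpha)=O(\nu_\alpha)$. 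Your Steps~2--3 now run verbatim at the $\nu_\alpha$-scale and produce a limit $U_\lambda$ solving $\Delta U_\lambda=f(x_0)U_\lambda^{q-1}$; since $\nabla u_\alpha(x_\alpha)=0$, the bubble is centred at~$0$, and reading off $U_\lambda(0)$ yields $\nu_\alpha/\mu_\alpha\to\lambda\in(0,\infty)$. This \emph{a posteriori} comparability of $\nu_\alpha$ and $\mu_\alpha$ is what delivers \eqref{mu and rho}, \eqref{v mu} and \eqref{Z mu}.
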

\begin{proof}
The proof involves similar arguments to the ones used for Lemma \eqref{lem: first estimate}. Let $y_\alpha\in B_{x_\alpha}(8\rho_\alpha)$ be such that
\begin{equation}
\begin{array}{r l}
 u_\alpha^q(y_\alpha)+&\left\vert\frac{\nabla u_\alpha(y_\alpha)}{u_\alpha(y_\alpha)}\right\vert^n+\left\vert\frac{\nabla^2u_\alpha(y_\alpha)}{u_\alpha(y_\alpha)}\right\vert^\frac{n}{2} +\vert\mathcal{L}_g W_\alpha(y_\alpha)\vert \\
 & =\sup_{B_{x_\alpha}(8\rho_\alpha)}\left(u_\alpha^q(x)+\left\vert\frac{\nabla u_\alpha(x)}{u_\alpha(x)}\right\vert^n+\left\vert\frac{\nabla^2u_\alpha(x)}{u_\alpha(x)}\right\vert^\frac{n}{2}+\vert \mathcal{L}_gW_\alpha(x)\vert \right)
\end{array}
\end{equation}
and let
\begin{equation}
    \nu_\alpha^{-n}:=u_\alpha^q(y_\alpha)+\left\vert\frac{\nabla u_\alpha(y_\alpha)}{u_\alpha(y_\alpha)}\right\vert^{n}+\left\vert\frac{\nabla^2u_\alpha(y_\alpha)}{u_\alpha(y_\alpha)}\right\vert^\frac{n}{2}+\vert\mathcal{L}_gW_\alpha(y_\alpha)\vert.
\end{equation}
Conditions (\ref{est: blow up no 22}) and (\ref{est: blow up no 32}) imply that
\begin{equation}\label{nu and rho}
\frac{\rho_\alpha}{\nu_\alpha}\to \infty\quad\text{ and }\quad\nu_\alpha\to 0\quad\text{ as }\quad\alpha\to\infty.
\end{equation}
Moreover,
\begin{equation}
 d_g(x_\alpha,y_\alpha)\leq C_2^{\frac{1}{n}}\nu_\alpha,
\end{equation}
which implies that the coordinates of $y_\alpha$ in the exponential chart around $x_\alpha$, defined as $\tilde{y}_\alpha:=\nu^{-1}\exp^{-1}_{x_\alpha}(y_\alpha)$, are bounded by $C_2^{\frac{1}{n}}$. Up to a subsequence, we may choose a finite limit $\tilde{y}_0:=\lim_{\alpha\to\infty}\tilde{y}_\alpha$. We denote 
\begin{equation}
\hat{v}_\alpha(x)=\nu_\alpha^{\frac{n-2}{2}}u_\alpha\left(\exp_{x_\alpha}(\nu_\alpha x)\right)\quad\text{ and}\quad \hat{Z}_\alpha(x)=\nu_\alpha^{n-1}Z_\alpha\left(\exp_{x_\alpha}(\nu_\alpha x)\right)
\end{equation}
for $ x\in\Omega_\alpha:=B_0\left(\frac{8\rho_\alpha}{\nu_\alpha}\right).$ As before, 
\begin{equation}
\hat{v}_\alpha(x)=O(1),\quad \left|\frac{\nabla \hat{v}_\alpha(x)}{\hat{v}_\alpha(x)}\right|=O(1),\quad \left\vert\frac{\nabla^2\hat{v}_\alpha(x)}{\hat{v}_\alpha(x)}\right\vert=O(1)
\end{equation}
and
\begin{equation}\label{Z nu}
\vert\mathcal{L}_\xi\hat{Z}_\alpha(x)\vert_\xi\to 0.
\end{equation}
This implies that
\begin{equation}
\hat{v}_\alpha^q(\tilde{y}_\alpha)+\left\vert\frac{\nabla\hat{v}_\alpha(\tilde{y}_\alpha)}{\hat{v}_\alpha(\tilde{y}_\alpha)}\right\vert^n+\left\vert\frac{\nabla^2\hat{v}_\alpha(\tilde{y}_\alpha)}{\hat{v}_\alpha(\tilde{y}_\alpha)}\right\vert^\frac{n}{2}=1.
\end{equation}
By applying the same analysis as in the proof of Lemma $\ref{lem: first estimate}$, we get that, up to passing to a subsequence, there exists $U_\lambda:=\lim_{\alpha\to\infty}v_\alpha$ in $\mathcal{C}^{2,\eta}_{loc}(\R^n)$, with $x_0:=\lim_{\alpha\to\infty}x_\alpha$, such that 
\begin{equation}
\Delta U_{\lambda}= f(x_0) U_{\lambda}^{q-1},
\end{equation}
Since $\nabla U_\lambda(0)=0$, it holds that
\begin{equation}\label{U lambda}
U_\lambda(x)=\lambda^{\frac{n-2}{2}}\left(1+\frac{f(x_0)\lambda^2\vert x\vert^2}{n(n-2)}\right)^{1-\frac{n}{2}}
\end{equation}
for some $\lambda>0$, where
\begin{equation}
\frac{\nu_\alpha}{\mu_\alpha}\to \lambda.
\end{equation}
This yields \eqref{mu and rho}, \eqref{v mu} and \eqref{Z mu}, thanks to \eqref{nu and rho}, \eqref{U lambda} and \eqref{Z nu}.
\end{proof}

\subsection{The sphere of dominance around a blow-up point}
\par We denote
\begin{equation}
B_\alpha(x)=\mu_\alpha^{\frac{n-2}{2}}\left(\mu_\alpha^2+\frac{f_\alpha(x_\alpha)}{n(n-2)}\vert x\vert^2\right)^{1-\frac{n}{2}}
\end{equation}
and 
\begin{equation}
\theta_\alpha(x)=\sqrt{\mu_\alpha^2+\vert x\vert^2}.
\end{equation}
Our next goal is to extend the estimates from a ball of size $\mu_\alpha$ to one of size $\rho_\alpha$. We define the radius on which the estimates continue to hold as
\begin{equation}
r_\alpha=\sup \mathcal{R}_\alpha
\end{equation}
where 
\begin{equation}\label{influence}
\begin{array}{c}
\mathcal{R}_\alpha=\Big\{ 0<r\leq\rho_\alpha,\quad v_\alpha\leq (1+\varepsilon)B_\alpha,\quad\vert\nabla(v_\alpha-B_\alpha)\vert_{\xi}\leq \varepsilon\vert\nabla B_\alpha\vert_\xi, \\
\quad\quad\quad\quad\quad\text{ and }B_0(r)\backslash B_0(2R_\alpha\mu_\alpha)\Big\}
\end{array}
\end{equation}
where
\begin{equation}
R_\alpha^2=\frac{n(n-2)}{f_\alpha(x_\alpha)}.
\end{equation}
The two following properties hold for $r_\alpha$:
\begin{equation}\label{r first}
r_\alpha=O(\sqrt{\mu_\alpha})
\end{equation}
and
\begin{equation}\label{r second}
r_\alpha>>\mu_\alpha.
\end{equation}
By the previous lemma, we know that the $\mathcal{C}^{2,\eta}$ limit holds on balls of order $\rho_\alpha$ and  by definition also of size $r_\alpha$, which is to say that the two are comparable. As a result, \eqref{mu and rho} implies \eqref{r first}. In order to get the second estimate, it suffices to note that, by the definition of $r_\alpha$ and by \eqref{varepsilon2},
\begin{equation}
\varepsilon\leq C \mu_\alpha^{\frac{n-2}{2}}r_\alpha^{2-n},
\end{equation}
which directly implies \eqref{r second}.

\subsubsection{First order estimates of $v_\alpha$ on $B_0(8\delta_\alpha)$}
\begin{lemma}\label{approx on larger balls}
Let $(\delta_\alpha)_\alpha$ $0<\delta_\alpha\leq r_\alpha$ be a sequence of radii. Then for any $z_\alpha\in B_0(8\delta_\alpha)$ there holds:
\begin{equation}
 v_\alpha(z_\alpha)+\vert\nabla v_\alpha(z_\alpha)\vert \vert z_\alpha\vert+\vert\nabla^2 v_\alpha(z_\alpha)\vert\vert z_\alpha \vert^2\leq CB_\alpha(z_\alpha).
\end{equation}
Moreover, there exists a sequence of positive numbers $(\kappa_\alpha)_{\alpha\in\N}$ such that
\begin{equation}
(1-\kappa_\alpha)B_\alpha(z_\alpha)\leq v_\alpha(z_\alpha).
\end{equation}
If $\delta_\alpha\to 0$, then $\kappa_\alpha\to 0$.
\end{lemma}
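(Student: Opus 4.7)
The strategy is to split $B_0(8\delta_\alpha)$ into the inner region $B_0(2R_\alpha\mu_\alpha)$, where Lemma \ref{first blow up} provides $\mathcal{C}^{2,\eta}_{loc}$ convergence of the rescaled $v_\alpha$ to the bubble $U$, and the outer region $B_0(8\delta_\alpha)\setminus B_0(2R_\alpha\mu_\alpha)$, where the definition \eqref{influence} of $r_\alpha$ already encodes most of the control on $v_\alpha$ and $\nabla v_\alpha$ in terms of $B_\alpha$. The remaining work is to rephrase these controls as the claimed inequalities, to bootstrap from first to second derivatives, and to produce the matching lower bound with the quantitative rate $\kappa_\alpha\to 0$.

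In the inner region $|z_\alpha|\le 2R_\alpha\mu_\alpha$, rescaling at scale $\mu_\alpha$ and invoking the convergence $\mu_\alpha^{(n-2)/2}v_\alpha(\mu_\alpha\cdot)\to U$ of Lemma \ref{first blow up} yields all three upper bounds together with the lower bound up to $o(1)$ error, since $U$ is exactly the rescaling of $B_\alpha$ at scale $\mu_\alpha$; the elementary pointwise identities $|\nabla B_\alpha|\le C\,B_\alpha/\theta_\alpha$ and $|\nabla^2 B_\alpha|\le C\,B_\alpha/\theta_\alpha^2$, obtained by direct differentiation of the explicit formula for $B_\alpha$, then put the estimates in the stated form. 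In the outer region, the bounds $v_\alpha\le (1+\varepsilon)B_\alpha$ and $|\nabla v_\alpha-\nabla B_\alpha|\le \varepsilon|\nabla B_\alpha|$ on $B_0(r_\alpha)\setminus B_0(2R_\alpha\mu_\alpha)$ are built into \eqref{influence}; the Hessian estimate then follows by rescaling at scale $s_\alpha=|z_\alpha|/6$ and feeding the upper bound on $v_\alpha$ through Lemma \ref{harnack lemma}, which converts the supremum bound into a Hessian bound at scale $s_\alpha$. If $8\delta_\alpha>r_\alpha$, the extra annulus $B_0(8\delta_\alpha)\setminus B_0(r_\alpha)$ is covered by Harnack propagation on finitely many dyadic annuli starting from $\partial B_0(r_\alpha)$, at the price of harmless multiplicative constants.

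The main obstacle is the quantitative lower bound in the outer region, namely $v_\alpha\ge (1-\kappa_\alpha)B_\alpha$ with $\kappa_\alpha\to 0$ as $\delta_\alpha\to 0$. I would argue by contradiction: assume a sequence $z_\alpha$ in the outer region with $\delta_\alpha\to 0$ and $v_\alpha(z_\alpha)<(1-\kappa)B_\alpha(z_\alpha)$ for some fixed $\kappa>0$, and consider the rescaled ratio $\bar v_\alpha(y):=v_\alpha(s_\alpha y)/B_\alpha(s_\alpha y)$ with $s_\alpha:=|z_\alpha|$. The fact that $\mu_\alpha/s_\alpha\to 0$ (guaranteed by \eqref{r second}), the weak estimate \eqref{weak}, the smallness \eqref{saving estimate}--\eqref{a est} of $\mathcal{L}_\xi Z_\alpha$, and the smallness of $\tilde Y_\alpha$ assumed in the theorem together force every nonlinear source term in the equation satisfied by $\bar v_\alpha$ to vanish in the limit. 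The already established upper bound provides a uniform $L^\infty$ control on $\bar v_\alpha$ on compact subsets of $\R^n\setminus\{0\}$, while Harnack chained dyadically back to $\partial B_0(2R_\alpha\mu_\alpha)$ (where the inner convergence forces $\bar v_\alpha\to 1$) delivers a uniform lower bound. A subsequential limit $\bar v$ is therefore a positive bounded harmonic function on a punctured neighbourhood of the origin with $\lim_{|y|\to 0}\bar v(y)=1$; removability of the isolated singularity extends $\bar v$ to a harmonic function on the whole ball with $\bar v(0)=1$, and a B\^ocher/strong-maximum-principle analysis, coupled with an iteration that sharpens the fixed $\varepsilon$ in \eqref{influence} into an $o(1)$ as $\delta_\alpha\to 0$, forces $\bar v\equiv 1$ and contradicts $\bar v(y_0)\le 1-\kappa$ at $y_0=\lim z_\alpha/|z_\alpha|$. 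The smallness hypotheses on $Y$ and $\tilde V$ are exactly what is needed to suppress the drift-type nonlinearities in \eqref{conformal system} under this rescaling and to ensure that the limit equation is actually harmonic.
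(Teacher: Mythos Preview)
Your treatment of the upper bound is sound and essentially matches the paper: the combination of the defining inequalities in \eqref{influence}, the inner $\mathcal{C}^{2,\eta}$ convergence from Lemma \ref{first blow up}, and the Harnack-type Lemma \ref{harnack lemma} (to propagate across $B_0(8\delta_\alpha)\setminus B_0(r_\alpha)$ and to upgrade to second derivatives) is exactly what the paper does, only the paper rescales once at scale $r_\alpha$ rather than at $|z_\alpha|$.

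The lower bound is where your plan diverges from the paper and where there is a genuine gap. The paper does \emph{not} argue by contradiction and rescaling; instead it writes the Green representation for $\Delta_g+h_\alpha$ on the whole manifold, keeps only the nonnegative term $\tilde f_\alpha v_\alpha^{q-1}$, changes variables $y\mapsto \mu_\alpha y$, and lets Fatou do the work: the integrand converges pointwise to $f(x_0)U^{q-1}$ against a kernel that reproduces $B_\alpha(z_\alpha)$, so $v_\alpha(z_\alpha)/B_\alpha(z_\alpha)\ge 1-o(1)$. The drift terms and $b_\alpha/v_\alpha$ contribute $o(1)$ because of the smallness of $\tilde V_\alpha$ and the uniform lower bound $v_\alpha\ge\varepsilon'$, together with $B_\alpha(z_\alpha)\ge c>0$ on $B_0(8\delta_\alpha)$ (which follows from $\delta_\alpha\le r_\alpha=O(\sqrt{\mu_\alpha})$).

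Your alternative route runs into two concrete obstacles. First, ``Harnack chained dyadically back to $\partial B_0(2R_\alpha\mu_\alpha)$'' does not give a uniform lower bound on $v_\alpha/B_\alpha$: there are $\sim\log(|z_\alpha|/\mu_\alpha)\to\infty$ dyadic steps, and each step only controls $v_\alpha$ up to a fixed multiplicative constant, so the chained constant blows up. Second, even granting a positive limit $\check v$ harmonic on a punctured ball with $\check v=R_0^{n-2}|y|^{2-n}+h$ and $h$ harmonic across $0$, the only a priori upper bound is $h\le \varepsilon R_0^{n-2}|y|^{2-n}$ with the \emph{fixed} $\varepsilon$ from \eqref{influence}; when $|z_\alpha|$ is comparable to $r_\alpha$ the rescaled domain does not exhaust $\R^n$, so you cannot send $|y|\to\infty$ and Liouville/B\^ocher alone do not force $h\equiv 0$. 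Your proposed ``iteration that sharpens $\varepsilon$'' is circular, since shrinking $\varepsilon$ in \eqref{influence} shrinks $r_\alpha$. This boundary case is precisely what the next lemma (Lemma \ref{profile on Br}) handles, and there the conclusion $H(0)=0$ requires a Pohozaev identity, not elementary harmonic function theory. The Green-function argument bypasses all of this.
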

\begin{proof}
For $x\in B_0(8)$:
\begin{equation}
\bar{v}_\alpha(x)=r_\alpha^{\frac{n-2}{2}}v_\alpha(r_\alpha x).
\end{equation}
Then $\bar{v}_\alpha$ satisfies 
\begin{equation}
\begin{array}{r l}
 \Delta_\xi \bar{v}_\alpha(x)+r_\alpha^2\bar{h}_\alpha(x) =& \bar{f}_\alpha(x)\bar{v}_\alpha^{q-1}(x)+r_\alpha^{2n}\frac{\bar{a}_\alpha(x)}{\bar{v}_\alpha^{q+1}(x)}-r_\alpha^n\frac{\tilde{b}_\alpha(x)}{\bar{v}_\alpha(x)}\\ 
& -r_\alpha^{\frac{n}{2}}\frac{\langle \nabla \bar{v}_\alpha(x),\bar{Y}_\alpha(x)\rangle}{\bar{v}_\alpha(x)}\left(\frac{r_\alpha^{\frac{n-2}{2}}\bar{d}_\alpha(x)}{\bar{v}_\alpha(x)}+\frac{r_\alpha^{\frac{3n-2}{n-2}}\bar{c}_\alpha(x)}{\bar{v}_\alpha^{q+1}(x)}\right)\\
& -r_\alpha^{2n+2}\frac{\langle\nabla\bar{v}_\alpha(x),\bar{Y}_\alpha(x) \rangle^2}{\bar{v}_\alpha(x)}\frac{1}{\bar{v}^{q+1}_\alpha(x)}
\end{array}
\end{equation}
where
\begin{equation}
\begin{array}{c}
\bar{a}_\alpha(x)=\tilde{a}_\alpha(r_\alpha x),\quad 
\bar{h}_\alpha(x)=\tilde{h}_\alpha(r_\alpha x),\quad
\bar{f}_\alpha(x)=\tilde{f}_\alpha(r_\alpha x),\\
\bar{c}_\alpha(x)=\tilde{c}_\alpha(r_\alpha x),\quad
\bar{d}_\alpha(x)=\tilde{d}_\alpha(r_\alpha x),\quad
\bar{Y}_\alpha(x)=\tilde{Y}_\alpha(r_\alpha x).
\end{array}
\end{equation}
By the definition of $r_\alpha$, we know that 
\begin{equation}
\bar{v}_\alpha(x)\leq C\left(\frac{\mu_\alpha}{r_\alpha}\right)^{\frac{n-2}{2}}
\end{equation}
in $B_0(1)\backslash B_0\left(\frac{1}{2}\right).$
By the weak estimate we know that
\smallskip
\begin{equation}
\begin{array}{r l}
\bar{v}_\alpha\leq C\\
\left\vert\frac{\nabla \bar v_\alpha(x)}{\bar v_\alpha(x)}\right\vert\leq C\\
\left \vert\frac{\nabla^2 \bar{v}_\alpha(x)}{\bar{v}_\alpha(x)} \right\vert\leq C \\
r_\alpha^{2n}\bar{a}_\alpha\leq C
\end{array}
\end{equation}
\smallskip
in $B_0(8)\backslash B_0\left(\frac{1}{2}\right)$. We conclude the proof by Lemma \ref{harnack lemma}. 
\par Considering $G_\alpha$ the Green function of $\Delta_g+h_\alpha$ in $M$. For any sequence $(z_\alpha)$ of points in $B_0(8r_\alpha)$:
\begin{equation}
v_\alpha(z_\alpha)\geq \varphi_\alpha(z_\alpha)\int_{B_0(r_\alpha)}\varphi_\alpha(y) G_\alpha\left(\Phi_\alpha^{-1}(z_\alpha),\Phi_\alpha^{-1}(y)\right)\title{f}_\alpha(y)v_\alpha^{q-1}(y)\,dy.
\end{equation}
In particular,
\begin{equation}
\begin{array}{r l}
 \frac{v_\alpha(z_\alpha)}{B_\alpha(z_\alpha)}\geq &\varphi_\alpha(z_\alpha)\int_{B_0(\frac{6r_\alpha}{\mu_\alpha})}\varphi_\alpha(\mu_\alpha y)\tilde{f}_\alpha(\mu_\alpha y)\left(\mu_\alpha^{\frac{n-2}{2}}v_\alpha(\mu_\alpha y)\right)^{q-1}  \\
 & \times G_\alpha(\Phi^{-1}_\alpha(z_\alpha),\Phi_\alpha^{-1}(\mu_\alpha y))d_g\left(\Phi^{-1}_\alpha(z_\alpha),\Phi_\alpha^{-1}(\mu_\alpha y)\right)^{n-2}\\
 &\times \left(\frac{\mu_\alpha^2+\frac{f_\alpha(x_\alpha)}{n(n-2)}\vert z_\alpha\vert^2}{d_g\left(\Phi_\alpha^{-1}(z_\alpha),\Phi_\alpha^{-1}(\mu_\alpha y)\right)^2}\right)^{\frac{n-2}{2}}\,dy.
\end{array}
\end{equation}
\end{proof}

\subsubsection{Improved weak estimate of $\mathcal{L}_\xi Z_\alpha$ on $B_0(7\delta_\alpha)$}

\begin{lemma}\label{improved weak estimate on LZ}
Let $(\delta_\alpha)_\alpha$ be a sequence of positive numbers such that $\delta_\alpha>>\mu_\alpha$ and $\delta_\alpha\leq \sqrt{\mu_\alpha}$. We get for any $x\in B_0(7\delta_\alpha)$,
\begin{equation}\label{LZ smaller than the ball}
\int_{B_0(6\delta_\alpha)}|x-y|^{2-n}|\mathcal{L}_\xi Z_\alpha(y)|^2_\xi v_\alpha^{-q-1}(y)\,dy\leq C\left( B_\alpha(x)+O(1)\right)
\end{equation}
and as a consequence
\begin{equation}\label{LZ ring}
\int_{B_0(6\delta_\alpha)\backslash B_0(\delta_\alpha)}|\mathcal{L}_\xi Z_\alpha|^2_\xi\,dy\leq C\left( \mu_\alpha^{2n-2}\delta_\alpha^{2-3n}+\mu_\alpha^{\frac{3n}{2}-1}\delta_\alpha^{-2n} \right),
\end{equation}
and there exists a sequence $s_\alpha\in (5\delta_\alpha, 6\delta_\alpha)$ such that
\begin{equation}\label{LZ bdry}
\int_{\partial B(s_\alpha)}|\mathcal{L}_\xi Z_\alpha|_\xi^2\,d\sigma\leq  C\left( \mu_\alpha^{2n-2}\delta_\alpha^{1-3n}+\mu_\alpha^{\frac{3n}{2}-1}\delta_\alpha^{-2n-1} \right).
\end{equation}
\end{lemma}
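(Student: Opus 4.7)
The strategy parallels Step 3 of the proof of Lemma \ref{lem: first estimate}, transposed from balls of order $1$ to balls of order $\delta_\alpha$ and anchored by the sharp pointwise control from Lemma \ref{approx on larger balls}. From the first equation of \eqref{conformal system} we have
\[
\Delta_\xi v_\alpha = \frac{\tilde a_\alpha}{v_\alpha^{q+1}} + \tilde f_\alpha v_\alpha^{q-1} - \tilde h_\alpha v_\alpha - \frac{\tilde b_\alpha}{v_\alpha} + (\textrm{gradient terms}),
\]
and since $\tilde a_\alpha = \tilde\rho_{1,\alpha} + \vert\tilde\Psi_\alpha + \tilde\rho_{2,\alpha}\mathcal{L}_\xi Z_\alpha\vert_\xi^2$, Young's inequality combined with the uniform lower bound $\tilde\rho_{2,\alpha}^2 \geq c>0$ (inherited from $\tilde N \geq \theta$) yields the crucial pointwise inequality
\[
\frac{\tilde a_\alpha}{v_\alpha^{q+1}} \geq \frac{c\,\vert\mathcal{L}_\xi Z_\alpha\vert_\xi^2}{v_\alpha^{q+1}} - \frac{C}{v_\alpha^{q+1}},
\]
the subtracted term being bounded thanks to $v_\alpha \geq \varepsilon'>0$. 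I apply Green's representation formula for $\Delta_\xi$ on a ball enclosing both $x$ and $B_0(6\delta_\alpha)$, for instance $B_0(8\delta_\alpha)$, using the Dirichlet Green function $\mathcal{G}$ which satisfies the two-sided control $\mathcal{G}(x,y) \geq c\vert x-y\vert^{2-n} - C'$. The boundary Poisson integral is dominated by $\sup_{\partial B} v_\alpha \leq C\mu_\alpha^{(n-2)/2}\delta_\alpha^{2-n}$, which is itself at most $CB_\alpha(x)$ for every $x\in B_0(7\delta_\alpha)$ by Lemma \ref{approx on larger balls}. Each remaining non-$\mathcal{L}_\xi Z_\alpha$ contribution to $\int \mathcal{G}\,\Delta_\xi v_\alpha\,dy$ is in turn estimated by $CB_\alpha(x)+O(1)$ using the pointwise bounds $v_\alpha \leq CB_\alpha$ and $\vert\nabla v_\alpha\vert \leq CB_\alpha\,\vert y\vert/(\mu_\alpha^2+\vert y\vert^2)$ derived from Lemma \ref{approx on larger balls}, and, for the $Y$-dependent gradient pieces, the smallness hypothesis $\vert\vert Y\vert\vert_{\mathcal{C}^{1,\alpha}} \leq \vartheta_{\theta,T}$. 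Rearranging yields \eqref{LZ smaller than the ball}.

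To deduce \eqref{LZ ring}, I test \eqref{LZ smaller than the ball} at a point $x$ with $\vert x\vert \in (6\delta_\alpha, 7\delta_\alpha)$, so that $\vert x-y\vert \sim \delta_\alpha$ uniformly on the annulus $B_0(6\delta_\alpha)\setminus B_0(\delta_\alpha)$. Lemma \ref{approx on larger balls} gives $v_\alpha^{-(q+1)}(y) \sim \mu_\alpha^{-(3n-2)/2}\vert y\vert^{3n-2}$ on that ring, while $B_\alpha(x) \sim \mu_\alpha^{(n-2)/2}\delta_\alpha^{2-n}$; plugging in and solving for $\int\vert\mathcal{L}_\xi Z_\alpha\vert_\xi^2\,dy$, the $B_\alpha(x)$ contribution produces the first summand $\mu_\alpha^{2n-2}\delta_\alpha^{2-3n}$, while the $O(1)$ contribution produces the second summand $\mu_\alpha^{3n/2-1}\delta_\alpha^{-2n}$ (the two are comparable precisely at the boundary case $\delta_\alpha \sim \sqrt{\mu_\alpha}$, consistent with the assumption $\delta_\alpha \leq \sqrt{\mu_\alpha}$). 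Finally, \eqref{LZ bdry} follows from \eqref{LZ ring} by coarea: since
\[
\int_{5\delta_\alpha}^{6\delta_\alpha}\int_{\partial B_0(r)}\vert\mathcal{L}_\xi Z_\alpha\vert_\xi^2\,d\sigma\,dr \leq \int_{B_0(6\delta_\alpha)\setminus B_0(\delta_\alpha)}\vert\mathcal{L}_\xi Z_\alpha\vert_\xi^2\,dy,
\]
an averaging argument produces some $s_\alpha\in(5\delta_\alpha, 6\delta_\alpha)$ at which the spherical integral is at most $\delta_\alpha^{-1}$ times the ring bound, giving exactly \eqref{LZ bdry}.

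\textbf{Main obstacle.} The real technical work lies entirely in \eqref{LZ smaller than the ball}: one must check, uniformly in $\alpha$ and in $x \in B_0(7\delta_\alpha)$, that \emph{every} Green convolution $\int \mathcal{G}(x,y)\cdot(\textrm{term})(y)\,dy$ is at most $CB_\alpha(x)+O(1)$. This is most delicate for the $\tilde f_\alpha v_\alpha^{q-1}$ piece, which is concentrated precisely where $B_\alpha$ peaks, and for the gradient integrands, which are formally singular but are tamed by the $B_\alpha$-like decay $\vert\nabla v_\alpha\vert/v_\alpha \leq C\vert y\vert/(\mu_\alpha^2+\vert y\vert^2)$ combined with $\vert\vert \tilde Y_\alpha\vert\vert_{\mathcal{C}^{1,\alpha}} \leq C\vartheta_{\theta,T}$; this is exactly where the smallness-of-drift hypothesis of Theorem \ref{stability syst2} is invoked in the present lemma. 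Once the bulk estimate is in hand, the passage to \eqref{LZ ring} and \eqref{LZ bdry} is pure algebra and coarea, as described above.
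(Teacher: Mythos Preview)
Your approach is correct and essentially the same as the paper's: apply the Green representation for the scalar equation on a ball of radius $\sim 7\delta_\alpha$, isolate the $\vert\mathcal{L}_\xi Z_\alpha\vert^2 v_\alpha^{-q-1}$ piece of $\tilde a_\alpha$, bound every remaining term by $C(B_\alpha(x)+O(1))$ using Lemma~\ref{approx on larger balls}, and then read off \eqref{LZ ring} and \eqref{LZ bdry} by restricting to the annulus and averaging.

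There is one point of divergence worth flagging. You assert that the $\tilde Y_\alpha$-dependent gradient terms are controlled by invoking the smallness hypothesis $\Vert Y\Vert_{\mathcal{C}^{1,\alpha}}\le\vartheta_{\theta,T}$, and you describe this as ``exactly where the smallness-of-drift hypothesis is invoked in the present lemma.'' The paper does \emph{not} use smallness of $Y$ here. Instead it estimates those contributions (its $H_2$ and $H_3$) using only the pointwise bounds from Lemma~\ref{approx on larger balls}, obtaining powers of $\delta_\alpha^2/\mu_\alpha$; the assumption $\delta_\alpha\le\sqrt{\mu_\alpha}$ in the hypothesis of the lemma is precisely what makes these $O(1)$. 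Your route via smallness of $Y$ is valid, but it spends a hypothesis the paper deliberately reserves for later (cf.\ the remark after Theorem~\ref{stability syst2} about using smallness ``as sparsely as possible''). It is worth redoing that estimate without smallness to see that $\delta_\alpha\le\sqrt{\mu_\alpha}$ alone suffices.
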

\begin{proof}
We use the Green's representation theorem for $\Delta_\xi +\tilde{h}_\alpha$ in $B_0(7\delta_\alpha)$ in the $1$st equation, and obtain
\begin{equation}
\int_{B_0(6\delta_\alpha)}\vert x-y\vert^{2-n}\frac{\vert\mathcal{L}_\xi Z_\alpha(y)\vert^2_\xi}{v_\alpha^{q+1}(y)}\, dy\leq C\left(B_\alpha(x)+H_1+H_2+H_3\right),
\end{equation}
where
\begin{equation}
\begin{array}{r l}
 H_1=&\int_{B_0(6\delta_\alpha)} \frac{\tilde{b}_\alpha(y)}{v_\alpha(y)}\vert x-y \vert^{2-n}\, dy, \\
 H_2=&\int_{B_0(6\delta_\alpha)} \langle\nabla v_\alpha(y),\tilde{V}(y)\rangle\left(\frac{\tilde{d}_\alpha(y)}{v_\alpha^2(y)}+\frac{\tilde{c}_\alpha(y)}{v_\alpha^{q+2}(y)}\right)\vert x-y\vert^{2-n}\, dy, \\
 H_3=&\int_{B_0(6\delta_\alpha)} \frac{\langle \nabla v_\alpha(y),\tilde{V}(y)\rangle^2}{v_\alpha^{q+3}(y)} \vert x-y\vert^{2-n}\, dy. \\
\end{array}
\end{equation}
Lemma \ref{approx on larger balls} yields the following estimates:
\begin{equation}
H_1\leq C\int_{B_0(6\delta_\alpha)}\mu_\alpha^{\frac{2-n}{n}}\theta_\alpha^{n-2}(y)\vert x-y\vert^{2-n}\,dy\leq C \mu_\alpha\left(\frac{\delta_\alpha^2}{\mu_\alpha} \right)^\frac{n}{2},
\end{equation}
\begin{equation}
\begin{array}{c}
H_2\leq C\int_{B_0(6\delta_\alpha)}\theta_\alpha^{-2}(y)\left(\mu_\alpha^{\frac{2-3n}{3}} \theta_\alpha^{3n-2}(y)+\mu_\alpha^{\frac{2-n}{2}} \theta_\alpha^{n-2}(y) \right)\quad\quad\\ 
\quad\quad\quad\quad\quad\quad\quad\quad\quad\quad\quad\quad\quad\quad\quad\quad\quad\quad\quad\quad\quad\quad \times\vert x-y\vert^{2-n}\,dx,\\
\leq C\left(\frac{\delta_\alpha^2}{\mu_\alpha}\right)^\frac{3n-2}{2}+C\left(\frac{\delta_\alpha^2}{\mu_\alpha}\right)^\frac{n-2}{2}\\
\end{array}
\end{equation}
and
\begin{equation}
H_3\leq C\int_{B_0(6\delta_\alpha)}\mu_\alpha^{\frac{2-3n}{2}}\theta_\alpha^{3n-2}(y)\vert x-y\vert^{2-n}\,dx\leq C\left(\frac{\delta_\alpha^2}{\mu_\alpha} \right)^\frac{3n-2}{2}.
\end{equation}
As a consequence,
\begin{equation}
 \int_{B_0(6\delta_\alpha)} \vert x-y\vert^{2-n}\frac{\vert\mathcal{L}_\xi Z_\alpha(y)\vert^2_\xi}{v_\alpha^{q+1}(y)}\, dy\leq C\left( \mu_\alpha^{\frac{n-2}{2}}\theta_\alpha^{2-n}(x)+1\right).
\end{equation}
In particular, we get \eqref{LZ ring} and \eqref{LZ bdry}.
\end{proof}

\subsubsection{First order estimate of $\mathcal{L}_\xi Z_\alpha$ on $B_0(3\delta_\alpha)$}
We use the previous improved weak estimate in order to get a first order estimate of $\mathcal{L}_\xi Z_\alpha$.
For $x\not =0$, let
\begin{equation}
\mathcal{G}_i(x)_j=-\frac{1}{4(n-1)\omega_{n-1}}\vert x\vert ^{2-n}\left((3n-2)\delta_{ij}+(n-2)\frac{y_i y_j}{\vert x\vert ^2}\right)
\end{equation}
be the $i$-th fundamental solution of $\overrightarrow{\Delta}_\xi$ in $\R^n$. 
We define on $\R^n$ the vector field 
\begin{equation}
V_\alpha(x)_i=-\frac{n^2}{2(n-2)}\ln\left(1+\frac{\vert x\vert^2}{\mu_\alpha^2}\right)\tilde{V}_\alpha(0)_i+\frac{n}{\mu_\alpha^2+\vert x\vert^2}\left\langle x,\tilde{V}_\alpha(0)\right\rangle x_i
\end{equation}
and a vector field $R_\alpha$ such that
\begin{equation}
\begin{array}{c}
\overrightarrow{\Delta}_\xi(V_\alpha+R_\alpha)(x)=2\frac{n-1}{n-2}\Big(-\frac{\left\langle\nabla^2 B_\alpha(x),\tilde{V}_\alpha(0)\right\rangle}{B_\alpha(x)}\quad\quad\quad\\
\quad\quad\quad\quad\quad\quad\quad\quad\quad\quad\quad\quad\quad\quad\quad\quad\quad+\frac{3n-2}{n-2}\frac{\left\langle\nabla B_\alpha(x),\tilde{V}_\alpha(0)\right\rangle\nabla B_\alpha(x)}{B^2_\alpha(x)}\Big).
\end{array}
\end{equation}
Note, in particular, that
\begin{equation}
\vert \overrightarrow{\Delta}_\xi R_\alpha(x) \vert\leq C\vert\tilde{V}_\alpha(0)\vert \mu_\alpha^2\theta_\alpha(x)^{-4}.
\end{equation}
Thus, 
\begin{equation}
\begin{array}{r| l c}
& C\vert\tilde{V}_\alpha(0)\vert \mu_\alpha^2\theta_\alpha^{-3}(x) & n= 5\\ \\
 \vert\mathcal{L}_\xi R_\alpha(x) \vert\leq 
&  C\vert\tilde{V}_\alpha(0)\vert \mu_\alpha^2\theta_\alpha^{-3}(x)\ln\left(1+ \frac{\theta_\alpha(x)}{\mu_\alpha}\right)& n=4, \\ \\
&  C\vert\tilde{V}_\alpha(0)\vert \mu_\alpha^2\theta_\alpha^{-2}(x) & n=3.
\end{array}
\end{equation}
By direct calculation, we see that
\begin{equation}
\begin{array}{r l}
 \left(\mathcal{L}_\xi V_\alpha\right)_{ij}(x)=& -\frac{2n}{n-2}\vert\tilde{V}_\alpha(0)\vert\frac{\vert x\vert}{\mu_\alpha^2+\vert x\vert^2}\left(\frac{x_j}{\vert x\vert}\frac{\tilde{V}_\alpha(0)_i}{\vert \tilde{V}_\alpha(0)\vert}+\frac{x_i}{\vert x\vert}\frac{\tilde{V}_\alpha(0)_j}{\vert \tilde{V}_\alpha(0)\vert}-\frac{2}{n}\left\langle\frac{x}{\vert x\vert},\frac{\tilde{V}_\alpha(0)}{\vert\tilde{V}_\alpha(0)\vert}\right\rangle\right)\\
 & -4\frac{\vert x\vert^3\vert \tilde{V}_\alpha(0)\vert}{\left(\mu_\alpha^2+\vert x\vert^2\right)^2}\left\langle\frac{x}{\vert x\vert},\frac{\tilde{V}_\alpha(0)}{\vert\tilde{V}_\alpha(0)\vert}\right\rangle\\
 &-4n\frac{\vert x\vert^3\vert \tilde{V}_\alpha(0)\vert}{\left(\mu_\alpha^2+\vert x\vert^2\right)^2}\left\langle\frac{x}{\vert x\vert},\frac{\tilde{V}_\alpha(0)}{\vert\tilde{V}_\alpha(0)\vert}\right\rangle\frac{x_i}{\vert x\vert}\frac{x_j}{\vert  x\vert}.
\end{array}
\end{equation}
Note that
\begin{equation}\label{V bdry}
\left\vert \mathcal{L}_\xi V_\alpha(x) \right\vert\leq C\vert\tilde{V}_\alpha(0)\vert\theta_\alpha^{-1}(x).
\end{equation}
\begin{lemma} Let $(\delta_\alpha)_\alpha$ be a sequence of positive numbers such that 
\begin{equation}
\frac{\mu_\alpha}{\delta_\alpha}\to 0\quad\text{ and }\quad \delta_\alpha\leq\min(r_\alpha,\sqrt{\mu_\alpha}).
\end{equation}
For any $x\in B_0(3\delta_\alpha)$, we get the following estimate on $\vert\mathcal{L}_\xi\left(Z_\alpha-V_\alpha\right)(x)\vert$:
\begin{equation}\label{new est Z}
\vert\mathcal{L}_\xi Z_\alpha(x)\vert\leq \theta_\alpha^{-1}(x)+\mu_\alpha^{n-1}\delta_\alpha^{1-2n}.
\end{equation}
\end{lemma}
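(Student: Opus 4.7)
The idea is to apply the Green representation formula for the Lamé operator $\overrightarrow{\Delta}_\xi$ on the ball $B_0(s_\alpha)$, $s_\alpha\in(5\delta_\alpha,6\delta_\alpha)$ given by Lemma \ref{improved weak estimate on LZ}, not to $Z_\alpha$ directly but to the corrected vector field $Z_\alpha-V_\alpha-R_\alpha$. By construction, $\overrightarrow{\Delta}_\xi(V_\alpha+R_\alpha)$ exactly reproduces the leading source term of $\tilde{\mathcal{R}}_\alpha$, namely
\begin{equation*}
2\frac{n-1}{n-2}\left(\frac{3n-2}{n-2}\frac{\langle\nabla B_\alpha,\tilde V_\alpha(0)\rangle\nabla B_\alpha}{B_\alpha^2}-\frac{\langle\nabla^2 B_\alpha,\tilde V_\alpha(0)\rangle}{B_\alpha}\right),
\end{equation*}
obtained by freezing $(\Phi_\alpha)_\star\tilde V_\alpha$ at the blow-up point and replacing $v_\alpha$ by its profile $B_\alpha$. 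Hence $\overrightarrow{\Delta}_\xi(Z_\alpha-V_\alpha-R_\alpha)$ reduces to error terms that can be controlled pointwise, and the Green representation then gives a pointwise estimate on $\mathcal{L}_\xi(Z_\alpha-V_\alpha-R_\alpha)(x)$, from which \eqref{new est Z} follows by the triangle inequality, using \eqref{V bdry} for $V_\alpha$ and the dimension-dependent bounds for $\mathcal{L}_\xi R_\alpha$ listed above.

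The error terms fall into four groups. First, the difference $(\Phi_\alpha)_\star\tilde V_\alpha(y)-\tilde V_\alpha(0)=O(|y|)$ multiplied by the leading profile contributes a source of size $|y|\,\theta_\alpha(y)^{-3}$; integrated against the kernel $|x-y|^{1-n}$ over $B_0(s_\alpha)$ this yields $O(\theta_\alpha(x)^{-1})$ by standard Emden--Fowler bookkeeping. Second, the replacement of $v_\alpha$ by $B_\alpha$ is handled by the sharp one-sided bounds of Lemma \ref{approx on larger balls}, which produce an error of the same order. Third, the conformal term $q\langle\nabla\ln\varphi_\alpha,\mathcal{L}_\xi Z_\alpha\rangle$ is bounded by $C|y|\,|\mathcal{L}_\xi Z_\alpha|$ thanks to $\varphi_\alpha(0)=1,\nabla\varphi_\alpha(0)=0$, and inserted into the Green integral it is absorbed using the improved weak estimate \eqref{LZ smaller than the ball}. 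Fourth, the lower order terms in $\widetilde{\mathcal T}_\alpha$ together with the lapse contribution $\langle(\Phi_\alpha)_\star\nabla\ln\tilde N,\mathcal{L}_\xi Z_\alpha\rangle$ are controlled by the same mechanism, using the smallness hypothesis $\|\tilde V_\alpha\|_{\mathcal{C}^{2,\alpha}}\leq \vartheta_{\theta,T}$ to ensure that self-coupled $\mathcal{L}_\xi Z_\alpha$ terms can be absorbed on the left. The boundary integral on $\partial B_0(s_\alpha)$ uses \eqref{LZ bdry}: Cauchy--Schwarz against the surface kernel gives
\begin{equation*}
\int_{\partial B_0(s_\alpha)}|x-y|^{1-n}|\mathcal{L}_\xi Z_\alpha|\,d\sigma\leq C\,\delta_\alpha^{1-n}\,\delta_\alpha^{(n-1)/2}\bigl(\mu_\alpha^{n-1}\delta_\alpha^{1-3n}+\mu_\alpha^{\frac{3n}{2}-1}\delta_\alpha^{-2n-1}\bigr)^{1/2},
\end{equation*}
which simplifies to the term $\mu_\alpha^{n-1}\delta_\alpha^{1-2n}$ appearing in \eqref{new est Z}.

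The main obstacle is the absorption step for the terms which are linear in $\mathcal{L}_\xi Z_\alpha$ itself: the Green integral naively yields a bound of the form $|\mathcal{L}_\xi(Z_\alpha-V_\alpha-R_\alpha)|(x)\leq \theta_\alpha(x)^{-1}+\mu_\alpha^{n-1}\delta_\alpha^{1-2n}+\varepsilon(\alpha)\sup|\mathcal{L}_\xi Z_\alpha|$, and closing the estimate requires $\varepsilon(\alpha)=o(1)$. This is precisely where the smallness hypothesis \eqref{extra hyp2} on $\tilde V$ (and hence $Y$), together with the normalisation of $\varphi_\alpha$ at $0$, is used; without it one would need a classification of the limiting Lamé system, which is an open problem as noted in the remark following Lemma \ref{lem: first estimate}. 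A secondary technical point is that the pointwise estimates for $\mathcal{L}_\xi R_\alpha$ are dimension-dependent, so each of the cases $n=3,4,5$ must be checked separately to confirm that the $R_\alpha$ contribution is indeed dominated by $\theta_\alpha(x)^{-1}$.
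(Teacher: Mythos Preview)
Your overall setup (Green representation for $\overrightarrow{\Delta}_\xi$ on $B_0(s_\alpha)$ applied to $Z_\alpha-V_\alpha-R_\alpha$, splitting into bulk and boundary contributions, and handling the boundary via \eqref{LZ bdry}) matches the paper. The gap is in how you close the self-coupling term
\[
I_1=\int_{B_0(s_\alpha)}|x-y|^{1-n}\,|\mathcal{L}_\xi Z_\alpha(y)|\,dy.
\]
You propose to absorb it as $\varepsilon(\alpha)\sup|\mathcal{L}_\xi Z_\alpha|$ with $\varepsilon(\alpha)=o(1)$, crediting the smallness of $\tilde V$ and the normalisation $\nabla\varphi_\alpha(0)=0$. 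This does not work: the source term $\langle(\Phi_\alpha)_\star\nabla\ln\tilde N,\mathcal{L}_\xi Z_\alpha\rangle$ carries an $O(1)$ coefficient, since only $\tilde V$ and $Y$ are small, not $\tilde N$. So there is no small prefactor available. The only gain you can extract is the $\delta_\alpha$ from integrating the kernel, but the weak estimate only gives $\sup_{B_0(s_\alpha)}|\mathcal{L}_\xi Z_\alpha|\le C\mu_\alpha^{-n}$, and $\delta_\alpha\mu_\alpha^{-n}$ is far larger than $\theta_\alpha(x)^{-1}\sim\mu_\alpha^{-1}$ near the origin. Likewise, trying to close directly in the weighted norm $\sup\theta_\alpha\,|\mathcal{L}_\xi Z_\alpha|$ fails because $\theta_\alpha(x)\int|x-y|^{1-n}\theta_\alpha(y)^{-1}dy$ is $O(1)$, not $o(1)$.

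The paper resolves this by an \emph{iteration on the decay exponent} rather than absorption. One assumes
\[
|\mathcal{L}_\xi Z_\alpha(x)|\le C\bigl(\theta_\alpha(x)^{-\beta}+\mu_\alpha^{n-1}\delta_\alpha^{1-2n}\bigr)
\]
for some $1<\beta\le n$ (the case $\beta=n$ is the weak estimate \eqref{weak}), feeds it into $I_1$, and uses the convolution bound $\int|x-y|^{1-n}\theta_\alpha(y)^{-\beta}dy\lesssim\theta_\alpha(x)^{1-\beta}$ (with a logarithm when $\beta=n$) to recover the same inequality with $\beta$ improved to $\beta-\tfrac12$. After finitely many steps one reaches $\beta=1$, which is \eqref{new est Z}. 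This bootstrap is the essential mechanism missing from your argument; the smallness hypothesis on $\tilde V$ plays no role at this point.
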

\begin{proof}
Without making mention of the conformal change factor $\varphi_\alpha$,
\medskip
We apply the Green representation theorem on the $2$nd equation. Let $\mathcal{G}_{\alpha,i}$ be the $i$-th Green $1$-form for $\overrightarrow{\Delta}_\xi$ with Neumann boundary conditions on $B_0(s_\alpha)$, $s_\alpha\leq 4\delta_\alpha$. Similarly, let
\begin{equation}
\mathcal{H}_{ij,\alpha}(x,y)_p=\partial_i\mathcal{G}_{\alpha,j}(x,y)_p+\partial_j\mathcal{G}_{\alpha,i}(x,y)_p-\frac{2}{n}\xi_{ij}\sum_{k=1}^n\partial_k\mathcal{G}_{\alpha,k}(x,y)_p.
\end{equation}
There holds that
\begin{equation}
\begin{array}{r l}
\mathcal{L}_\xi(Z_\alpha-V_{\alpha}-R_\alpha)_{ij}(x)=&\int_{B_0(s_\alpha)}\mathcal{H}_{ij,\alpha}(x,y)_p\overrightarrow{\Delta}_\xi (Z_\alpha-V_{\alpha}-R_\alpha)^p(y)\,dy\\
&+\int_{\partial B_0(s_\alpha)}\mathcal{H}_{ij,\alpha}(z_\alpha,y)_p\nu_p\mathcal{L}_\xi (Z_\alpha-V_{\alpha}-R_\alpha)^{pq}(y)\,d\sigma.
\end{array}
\end{equation}
Keeping in mind that $R_\alpha$ is negligible compared to $V_\alpha$, we obtain the estimate
\begin{equation}\label{estimate-Zalpha}
\vert\mathcal{L}_\xi(Z_\alpha-V_\alpha-R_\alpha)(x) \vert_\xi\leq C\left(I_1+I_2+I_3+I_4+J_1+J_2\right),
\end{equation}
where the bulk terms are
\begin{equation}
\begin{array}{r l}
I_1 =&  \int_{B_0(6\delta_\alpha)}\vert x-y \vert^{1-n}\vert\mathcal{L}_\xi Z_\alpha(y) \vert\, dy\\
I_2 =& \Big\vert 2\frac{n-1}{n-2}\int_{B_0(6\delta_\alpha)}\frac{3n-2}{n-2}\left(\frac{\left\langle\nabla v_\alpha(y),\tilde{V}_\alpha(y)\right\rangle\nabla v_\alpha(y)}{v_\alpha^2(y)}- \frac{\left\langle\nabla B_\alpha(y),\tilde{V}_\alpha(y)\right\rangle\nabla B_\alpha(y)}{B_\alpha^2(y)}\right)\vert x-y \vert^{1-n}\\
& \quad\quad\quad\quad -\left(\frac{\left\langle\nabla^2 v_\alpha(y),\tilde{V}_\alpha(y)\right\rangle}{v_\alpha(y)}- \frac{\left\langle\nabla^2 B_\alpha(y),\tilde{V}_\alpha(y)\right\rangle}{B_\alpha(y)} \right)\vert x-y \vert^{1-n}\,dy\Big\vert\\
I_3=& \Big\vert\int_{B_0(6\delta_\alpha)} \vert x-y\vert^{1-n}\Big(-\frac{\left\langle\nabla v_\alpha(y),\tilde{V}_\alpha(y)\right\rangle}{v_\alpha(y)}\nabla\ln\tilde{N}_\alpha(y)\\
&\quad\quad\quad\quad\quad\quad\quad+div\tilde{V}_\alpha(y)\frac{\nabla v_\alpha(y)}{v_\alpha(y)}-\frac{\langle\nabla\tilde{V}(y),\nabla v_\alpha(y) \rangle}{v_\alpha(y)}\Big)\,dy\Big\vert\\
I_4=& \int_{B_0(6\delta_\alpha)} \vert x-y\vert^{1-n}\, dy\\
\end{array}
\end{equation}
\smallskip
\smallskip and the boundary terms
\begin{equation}
\begin{array}{r l}
J_1=&\int_{\partial B_0}\vert x-y\vert^{1-n}\vert\mathcal{L}_\xi V_\alpha(x)\vert\,d\sigma,\\
J_2=&\int_{\partial B_0}\vert x-y\vert^{1-n}\vert\mathcal{L}_\xi Z_\alpha(x)\vert\,d\sigma.
\end{array}
\end{equation}
Then, by \eqref{V bdry}, 
\begin{equation}
J_1\leq C \delta_\alpha^{-1},
\end{equation}
and by \eqref{LZ bdry},
\begin{equation}
J_2\leq\mu_\alpha^{n-1} \delta_\alpha^{1-2n}.
\end{equation}
Next, we see that
\begin{equation}
\begin{array}{r l}
I_2\leq & C\int_{B_0(6\delta_\alpha)}\vert x-y\vert^{1-n}\theta_\alpha^{-2}(y)\,dy \\
 \leq & C\theta_\alpha^{-1}(x)\int_{B_0\left(\frac{6\delta_\alpha}{\theta_\alpha(x)} \right)}\left\vert\frac{x}{\theta_\alpha(x)}-z \right\vert^{1-n}\frac{1}{\left(\frac{\mu_\alpha}{\theta_\alpha(x)}\right)^2+\left\vert z\right\vert^2}\,dz
\end{array}
\end{equation}
so that
\begin{equation}
\left\vert I_2\right\vert\leq  \theta_\alpha^{-1}(x).
\end{equation}
The term $I_3$ is in fact negligible when compared to $I_2$ and so 
\begin{equation}\left\vert I_3\right\vert\leq \theta_\alpha^{-1}(x)
\end{equation}
also.
It is also clear that
\begin{equation}
I_4\leq C\delta_\alpha.
\end{equation}
Coming back to \eqref{estimate-Zalpha} with all these estimates, we thus obtain that 
\begin{equation}\label{estimate-Zalpha1}
\left\vert {\mathcal L}_\xi Z_\alpha(x)\right\vert \le C\left(\theta_\alpha(x)^{-1}+\mu_\alpha^{n-1}\delta_\alpha^{1-2n}\right) + I_1.
\end{equation}
It remains to estimate $I_1$. We shall use an iterative argument to do it. Assume that 
\begin{equation}\label{estimate-Zalpha2}
\left\vert {\mathcal L}_\xi Z_\alpha(x)\right\vert \le C\left(\theta_\alpha(x)^{-\beta}+\mu_\alpha^{n-1}\delta_\alpha^{1-2n}\right)
\end{equation}
for some $1<\beta\le n$. Note that, thanks to the weak estimate \eqref{weak} on ${\mathcal L}_\xi Z_\alpha$, it holds for $\beta=n$. If \eqref{estimate-Zalpha2} holds, we can write that 
\begin{equation}
\begin{array}{rcl}
I_1&\le & C \mu_\alpha^{n-1}\delta_\alpha^{1-2n}\int_{B_0(6\delta_\alpha)}\vert x-y \vert^{1-n} \, dy\\
&&+ C \int_{B_0(6\delta_\alpha)}\vert x-y \vert^{1-n}\theta_\alpha(y)^{-\beta} \, dy\\
&\le& C \mu_\alpha^{n-1}\delta_\alpha^{2-2n}\\ 
&& +\left\{\begin{array}{ll}
\theta_\alpha(x)^{1-\beta}&\hbox{ if }\beta<n\\
\theta_\alpha(x)^{1-n}\ln\left(1+\frac{\theta_\alpha(x)}{\mu_\alpha}\right)&\hbox{ if }\beta=n\\
\end{array}\right.
\end{array}
\end{equation}
Remember here that $\beta>1$. Coming back to \eqref{estimate-Zalpha1}, we obtain that, if \eqref{estimate-Zalpha2} holds for some $1<\beta\le n$, it necessarily also holds when $\beta$ is replaced by $\beta-\frac{1}{2}$. Since, as already said, it holds for $\beta=n$, we obtain by induction that it holds for all $\beta=n-\frac{k}{2}$ as long as $n-\frac{k-1}{2}>1$. Thus, it holds for $\beta=1$. But this is exactly the estimate \eqref{new est Z}.
\end{proof}

\begin{rem}
For $\delta_\alpha=r_\alpha$, we get that
\begin{equation}
\vert\mathcal{L}_\xi Z_\alpha\vert\leq\theta_\alpha^{-1}+\mu_\alpha^{n-1}\delta_\alpha^{1-2n}
\end{equation}
implies
\begin{equation}
\vert \mathcal{L}_\xi Z_\alpha\vert\leq \left(\frac{\mu_\alpha}{r_\alpha} \right)^{n-1}\theta_\alpha^{-n}.
\end{equation}
\end{rem}
\subsubsection{Asymptotic profile on $B_0(2r_\alpha)$}
\begin{lemma}\label{profile on Br}
Up to a subsequence, it holds that
\begin{equation}\label{asymptotic profile}
v_\alpha(0)r_\alpha^{n-2}v_\alpha(r_\alpha x)\to\frac{R_0^{n-2}}{\vert x\vert^{n-2}}+H(x)\quad\text{ in }\mathcal{C}^2_{loc}(B_0(2)\backslash\{0\}),
\end{equation}
where $H$ is a non-negative superharmonic function in $B_0(2)$. We recall that, by \eqref{mu v},
\begin{equation}
v_\alpha(0)=\mu_\alpha^{1-\frac{n}{2}}.
\end{equation}
\end{lemma}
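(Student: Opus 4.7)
The plan is to rescale by $r_\alpha$, use the upper and lower bounds of Lemma \ref{approx on larger balls} together with the smallness estimates from Lemma \ref{improved weak estimate on LZ}, and extract a limit on the punctured ball which is then extended by a removable--singularity argument.

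First I would set $\bar v_\alpha(x) := v_\alpha(0)\,r_\alpha^{n-2}\,v_\alpha(r_\alpha x)$ on $B_0(2)\setminus\{0\}$. Using $v_\alpha(0)=\mu_\alpha^{1-n/2}$, $\mu_\alpha/r_\alpha\to 0$ from \eqref{r second} and $r_\alpha=O(\sqrt{\mu_\alpha})$ from \eqref{r first}, a direct computation rewrites the first equation of \eqref{conformal system} as $\Delta_\xi \bar v_\alpha = E_\alpha(x)$, where every term in $E_\alpha$ carries a prefactor that is $o(1)$ uniformly on each compact subset of $B_0(2)\setminus\{0\}$. Concretely, $\tilde f_\alpha v_\alpha^{q-1}$ rescales to $\mu_\alpha^{2}\,r_\alpha^{-2}\,\tilde f_\alpha(r_\alpha\cdot)\,\bar v_\alpha^{q-1}$, which vanishes once $\bar v_\alpha$ is shown to be bounded away from $0$; the term $\tilde a_\alpha/v_\alpha^{q+1}$ is controlled using the bound on $|\mathcal{L}_\xi Z_\alpha|$ from Lemma \ref{improved weak estimate on LZ} combined with the lower bound $v_\alpha\geq\varepsilon'$ from \eqref{v varepsilon}; the $\tilde Y_\alpha$--gradient terms vanish thanks to the smallness hypothesis \eqref{extra hyp2} on $Y$.

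Next I would establish the two-sided bound $\tfrac{1}{C}|x|^{2-n}\leq \bar v_\alpha(x)\leq C|x|^{2-n}$ on each compact subset $K\subset B_0(2)\setminus\{0\}$ for $\alpha$ large. The upper bound follows by plugging $z=r_\alpha x$ into $v_\alpha(z)\leq C B_\alpha(z)$ of Lemma \ref{approx on larger balls} and using $r_\alpha |x|\gg \mu_\alpha$ on $K$. The matching lower bound follows by the same substitution in $v_\alpha(z)\geq(1-\kappa_\alpha)B_\alpha(z)$, which also identifies the leading profile: $r_\alpha^{n-2}\bigl(\mu_\alpha^2 + \tfrac{f_\alpha(x_\alpha)}{n(n-2)}r_\alpha^2|x|^2\bigr)^{1-n/2}\to R_0^{n-2}|x|^{2-n}$ pointwise on $B_0(2)\setminus\{0\}$, with $R_0^{n-2}=\bigl(n(n-2)/f_0(x_0)\bigr)^{(n-2)/2}$. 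Applying Harnack from Lemma \ref{harnack lemma} at each scale, together with interior $\mathcal{C}^{2,\eta}$ elliptic estimates for the equation $\Delta_\xi \bar v_\alpha = E_\alpha$ on each annulus $\{1/K\leq|x|\leq 2\}$, then yields, up to a subsequence, $\bar v_\alpha\to\bar v$ in $\mathcal{C}^2_{loc}(B_0(2)\setminus\{0\})$ with $\bar v$ harmonic and $\bar v\geq R_0^{n-2}|x|^{2-n}$.

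Finally I would set $H(x):=\bar v(x)-R_0^{n-2}|x|^{2-n}$, which is harmonic and non-negative on $B_0(2)\setminus\{0\}$. The upper bound $\bar v(x)\leq C|x|^{2-n}$ limits its growth at $0$, so Bôcher's classification of non-negative harmonic functions with an isolated singularity forces $H$ to agree near $0$ with $c_0|x|^{2-n}+\tilde H(x)$ for some $c_0\geq 0$ and a harmonic function $\tilde H$; equivalently, $-\Delta H$ is a non-negative multiple of $\delta_0$ in $\mathcal{D}'(B_0(2))$, so $H$ is a non-negative superharmonic function on $B_0(2)$. The main obstacle is the bookkeeping of the second step, in particular verifying that the nonlinear singularity-concentrating contributions $\tilde f_\alpha v_\alpha^{q-1}$ and $\tilde a_\alpha/v_\alpha^{q+1}$ produce no residual bulk mass away from $0$; this uses crucially the scale estimates \eqref{r first}--\eqref{r second} on $\mu_\alpha/r_\alpha$ and the smallness hypothesis on $Y$, $\tilde V$.
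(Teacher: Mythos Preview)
Your overall strategy---rescale by $r_\alpha$, pull the two--sided control on $v_\alpha$ from Lemma~\ref{approx on larger balls}, extract a limit on the punctured ball, then read off the singular part---is exactly the paper's route. The gap is in your second paragraph: you assert that \emph{every} term of $E_\alpha$ is $o(1)$ on compact subsets of $B_0(2)\setminus\{0\}$, and conclude that the limit $\bar v$ is harmonic. This is not justified for the contribution of $\vert\mathcal{L}_\xi Z_\alpha\vert^2$ inside $\tilde a_\alpha/v_\alpha^{q+1}$. After the rescaling the corresponding term becomes
\[
\mu_\alpha^{2-2n}\,r_\alpha^{4n-2}\,\frac{\vert\tilde\rho_{2,\alpha}(r_\alpha x)\vert^2\,\vert\mathcal{L}_\xi Z_\alpha(r_\alpha x)\vert^2}{\check v_\alpha^{q+1}(x)}\,,
\]
and the sharpest pointwise control available (the estimate \eqref{new est Z}, which is what Lemma~\ref{improved weak estimate on LZ} feeds into) only yields $\vert\mathcal{L}_\xi Z_\alpha(r_\alpha x)\vert\le C\,r_\alpha^{-1}$ on the annulus. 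Plugging this in, together with either the lower bound $v_\alpha\ge\varepsilon'$ or the sharper $v_\alpha\gtrsim B_\alpha$, gives a bound of order $(r_\alpha/\sqrt{\mu_\alpha})^{\,n-2}$, which by \eqref{r first} is only $O(1)$, not $o(1)$. The paper accordingly never claims harmonicity of the limit; it records only that this term is $L^\infty$ on $B_0(2)\setminus\{0\}$ (see \eqref{check three}) and concludes that $H$ is \emph{superharmonic}, not harmonic.

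The fix is immediate and your argument survives with a one--word change: the only term that fails to vanish is non--negative (it is a square divided by $v_\alpha^{q+1}>0$), so in the limit $\Delta_\xi\bar v\ge 0$, i.e.\ $\bar v$ is superharmonic on $B_0(2)\setminus\{0\}$. Then $H=\bar v-R_0^{n-2}\vert x\vert^{2-n}$ is non--negative and superharmonic on the punctured ball with $H\le C\vert x\vert^{2-n}$ near~$0$; the Riesz decomposition (or the distributional identity $\Delta_\xi\bar v \ge c\,\delta_0$ for some $c\ge 0$) then extends $H$ as a non--negative superharmonic function on the full ball $B_0(2)$. Your invocation of B\^ocher for harmonic functions should be replaced by this slightly more general statement.
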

\begin{proof}
\textbf{Step 1:} Let
\begin{equation}
\check{v}_\alpha(x)=\mu_\alpha^{1-\frac{n}{2}}r_\alpha^{n-2}v_\alpha(r_\alpha x),\quad x\in B_0(2),
\end{equation}
where $\mu_\alpha$ is defined in \eqref{mu v}. Then
\begin{equation}
\Delta_\xi \check{v}_\alpha=\check{F}_\alpha,
\end{equation}
with 
\begin{equation}\label{F tilde}
\begin{array}{r l}
     \check{F}_\alpha=&     -r_\alpha^{2}\check{h}_\alpha(x)\check{v}_\alpha(x) +\mu_\alpha^{2}r_\alpha^{-2}\check{f}_\alpha (x)\check{v}_\alpha^{q-1}(x) \\ 
    &     +\mu_\alpha^{2-2n}r_\alpha^{4n-2}\frac{\check{\rho}_{1,\alpha}(x)+\vert\check{\Psi}_\alpha(x)+\check{\rho}_{2,\alpha}(x)\mathcal{L}_\xi Z_\alpha(x)\vert^2_\xi}{\check{v}^{q+1}_\alpha(x)}\\ &-\mu_\alpha^{2-n}r_\alpha^{2n-2}\frac{\check{b}_\alpha(x)}{\check{v}_\alpha (x)}-\mu_\alpha^{\frac{2-n}{2}}r_\alpha^{n-2} \frac{\langle\nabla \check{v}_\alpha(x), \check{Y}_\alpha(x)\rangle^2}{\check{v}_\alpha^2(x)u_\alpha^{q+1}\left(\exp_{x_\alpha}(r_\alpha x)\right)}\\ 
    &  -\mu_\alpha^{1-\frac{n}{2}}r_\alpha^{n-1} \frac{\langle\nabla \check{v}_\alpha(x),\check{Y}_\alpha(x)\rangle}{\check{v}_\alpha(x)} \left( \frac{\check{d}_\alpha(x)}{v_\alpha(x)}+\frac{\check{c}_\alpha(x)}{v_\alpha(x)^{q+1}}\right)
\end{array}
\end{equation}
This implies that
\begin{equation}
\begin{array}{c}
\Delta_{\check{g}_\alpha}\check{v}_\alpha= \left(\frac{\mu_\alpha}{r_\alpha}\right)^2\check{f}_\alpha\check{v}_\alpha^{q-1}+\left(\frac{r_\alpha^2}{\mu_\alpha} \right)^{2n}\left(\frac{\mu_\alpha}{r_\alpha}\right)^2\frac{\vert\check{\rho}_{2,\alpha}\vert^2\vert\mathcal{L}_\xi Z_\alpha(r_\alpha x) \vert^2}{\check{v}_\alpha^{q+1}}\\
-\left(\frac{r_\alpha^2}{\mu_\alpha}\right)^{2n-2}\frac{\langle\nabla \check{v}_\alpha,\check{Y}_\alpha \rangle^2}{\check{v}_\alpha^{q+3}}+o(1);
\end{array}
\end{equation}
where
\begin{equation}
\check{f}_\alpha(x)=\tilde{f}_\alpha(r_\alpha x), \quad\check{\rho}_{2,\alpha}=\tilde{\rho}_{2,\alpha}(r_\alpha x),\text{ and }\quad \check{Y}_\alpha(x)=\tilde{Y}_\alpha(r_\alpha x).
\end{equation}
By the definition \eqref{influence}, there holds for some positive $C$ that
\begin{equation}\label{check two}
\left(\left(\frac{\mu_\alpha}{r_\alpha}\right)^{2}+\frac{f_\alpha(x_\alpha)}{n(n-2)}\vert x\vert^2\right)^{1-\frac{n}{2}}\leq\check{v}_\alpha(x)\leq \left(\frac{f_\alpha(x_\alpha)\vert x\vert^2}{n(n-2)}\right)^{1-\frac{n}{2}}.
\end{equation}
Similarly,
\begin{equation}
\vert \nabla\check{v}_\alpha(x)\vert\leq \left(\frac{f_\alpha(x_\alpha)\vert x\vert^2}{n(n-2)}\right)^{-\frac{n}{2}}.
\end{equation}
Moreover, for any $x\in B_0(2)$:
\begin{equation}\label{check three}
\frac{\check{a}_\alpha(x)}{\check{v}_\alpha^{q+1}(x)}\leq C\left(\left(\frac{\mu_\alpha}{r_\alpha}\right)^{2}+\frac{f_\alpha(x_\alpha)}{n(n-2)}\vert x\vert^2\right)^{\frac{n}{2}}\in L^{\infty}(B_0(2)\backslash\{0\}).
\end{equation}
We recall that we've assumed $\check{Y}_\alpha\to 0$ in $\mathcal{C}^{0,\alpha}$. By standard elliptic theory, we see that
\begin{equation}
\check{v}_\alpha\to\check{v}\quad\text{ in }\mathcal{C}^1_{loc}(B_0(2)\setminus\{0\})\quad\text{ as }\alpha\to\infty.
\end{equation}
For $x\not =0$,
\begin{equation}
\check{v}(x)=\frac{\lambda_0}{\vert x\vert^{n-2}}+H(x),
\end{equation}
where $H$ is a superharmonic function in $B_0(2)$ and $\lambda_0=\left(\frac{n(n-2)}{f(x_0)}\right)^{1-\frac{n}{2}}$. Moreover, $H\geq 0$ in $B_0(2)$. If $r_\alpha<\rho_\alpha$, then $H(0)>0$. Indeed, by the definition \eqref{influence}, there exists $y_\alpha\in B_0(r_\alpha)$ such that at least one of the following conditions hold: 
\begin{enumerate}
\item $v_\alpha(y_\alpha)=(1+\varepsilon)B_\alpha(y_\alpha)$,
\item $\vert\nabla v_\alpha(y_\alpha)\vert_\xi=(1+\varepsilon)\vert\nabla B_\alpha(y_\alpha)\vert_\xi$,
\end{enumerate}
\par Letting $\check{y}_\alpha=\frac{y_\alpha}{r_\alpha},$ we see that either $H(\check{y}_\alpha)$ or $\nabla H(\check{y}_\alpha)$ are non-zero, and since $H$ is a non-negative superharmonic function, then $H(0)>0.$ Independently, we show that $H(0)\leq 0$. The Pohozaev identity writes as
\begin{equation}
\begin{array}{c}
\int_{B_0(\delta r_\alpha)}\left(x^k\partial_k v_\alpha(x)+\frac{n-2}{2}v_\alpha(x)\right)\Delta_\xi v_\alpha(x)\,dx\quad\quad\quad\quad \quad\quad\quad\quad \quad\quad\quad\quad\\
\quad\quad\quad\quad =\int_{\partial B_0(\delta r_\alpha)}\left( \frac{1}{2}\delta r_\alpha\vert\nabla v_\alpha(x) \vert^2_\xi-\frac{n-2}{2}v_\alpha(x)\partial_\nu v_\alpha(x) -\delta r_\alpha(\partial_\nu v_\alpha(x))^2 \right)\,d\sigma.
\end{array}
\end{equation}
Thanks to \eqref{asymptotic profile}, we can estimate the boundary terms as 
\begin{equation}
\begin{array}{l}
\int_{\partial B_0(\delta r_\alpha)}\left( \frac{1}{2}\delta r_\alpha\vert\nabla v_\alpha(x) \vert^2_\xi-\frac{n-2}{2}v_\alpha(x)\partial_\nu v_\alpha(x) -\delta r_\alpha(\partial_\nu v_\alpha(x))^2 \right)\,d\sigma \\
\quad = \left(\frac{\mu_\alpha}{r_\alpha}\right)^{n-2} \left(\int_{\partial B_0(\delta)}\left(\frac{1}{2}\delta \left\vert \nabla\Psi\right\vert^2-\frac{n-2}{2}\Psi\partial_\nu \Psi-\delta \left(\partial_\nu\Psi\right)^2\right)d\sigma+o(1)\right)
\end{array}
\end{equation}
where $\Psi(x)=R_0^{n-2}\left\vert x\right\vert^{2-n}+H(x)$. Simple computations lead then to 
\begin{equation}
\begin{array}{l}
\int_{\partial B_0(\delta r_\alpha)}\left( \frac{1}{2}\delta r_\alpha\vert\nabla v_\alpha(x) \vert^2_\xi-\frac{n-2}{2}v_\alpha(x)\partial_\nu v_\alpha(x) -\delta r_\alpha(\partial_\nu v_\alpha(x))^2 \right)\,d\sigma \\
\quad = \left(\frac{\mu_\alpha}{r_\alpha}\right)^{n-2}\left(\frac{(n-2)^2}{2}\omega_{n-1}R_0^{n-2}H(0)+O\left(\delta\right)\right).
\end{array}
\end{equation}
On the other hand, the LHS writes as
\begin{equation}
\int_{B_0(\delta r_\alpha)}\left(x^k\partial_k v_\alpha(x)+\frac{n-2}{2}v_\alpha(x) \right)\Delta_\xi v_\alpha(x)\,dx=J_1+J_2+J_3+J_4,
\end{equation}
where
\begin{equation}
\begin{array}{r l}
 J_1=&-\int_{B_0(\delta r_\alpha)}\left(x^k\partial_k v_\alpha(x)+\frac{n-2}{2}v_\alpha(x)\right)\\ 
 &\quad\quad\quad\quad\quad \times\left(\tilde{h}_\alpha(x) v_\alpha(x)+\frac{\tilde{b}_\alpha(x)}{v_\alpha(x)}+\langle\nabla v_\alpha(x),\tilde{Y}_\alpha(x) \rangle\left(\frac{\tilde{d}_\alpha(x)}{v_\alpha^2(x)}+\frac{\tilde{c}_\alpha(x)}{v_\alpha^{q+2}(x)} \right) \right)\,dx  \\
 J_2=&\int_{B_0(\delta r_\alpha)}\left(x^k\partial_k v_\alpha(x)+\frac{n-2}{2}v_\alpha(x)\right)\tilde{f}_\alpha(x) v_\alpha^{q-1}(x)\,dx  \\
 J_3=&\int_{B_0(\delta r_\alpha)}\left(x^k\partial_k v_\alpha(x)+\frac{n-2}{2}v_\alpha(x)\right)\frac{\tilde{\rho}_{1,\alpha}(x)+\vert\tilde{\Psi}_\alpha(x)+\tilde{\rho}_{2,\alpha}(x)\mathcal{L}_\xi Z_\alpha(x)\vert^2}{v_\alpha^{q+1}(x)}\, dx  \\
 J_4=&-\int_{B_0(\delta r_\alpha)}\left(x^k\partial_k v_\alpha(x)+\frac{n-2}{2}v_\alpha(x)\right)\frac{\langle \nabla v_\alpha(x), \tilde{Y}_\alpha(x) \rangle^2}{v_\alpha^{q+3}(x)}\, dx  \\
\end{array}
\end{equation}
We find estimates for each quantity in turn. In the case of $J_1$, we notice that
\begin{equation}
\left\vert\int_{B_0(\delta r_\alpha)}\tilde{h}_\alpha(x) B_\alpha^2(x)\, dx\right\vert\leq  C \left\{
\begin{array}{ll}
\mu_\alpha^2&\hbox{ if }n=5\\
\mu_\alpha^2 \ln\left(\frac{r_\alpha}{\mu_\alpha}\right)&\hbox{ if }n=4\\
\delta r_\alpha\mu_\alpha&\hbox{ if }n=3\\
\end{array}\right. 
\end{equation}
Then we have that 
\begin{equation}
 \left\vert \int_{B_0(\delta r_\alpha)} \tilde{b}_\alpha(x)\, dx\right\vert\leq C(\delta r_\alpha)^n,
\end{equation}
and
\begin{equation}
\begin{array}{r l}
\left\vert\int_{B_0(\delta r_\alpha)}  \frac{\langle \nabla B_\alpha(x), \tilde{Y}_\alpha(x) \rangle}{B_\alpha(x)}\left(\tilde{d}_\alpha(x)+\frac{\tilde{c}_\alpha(x)}{v_\alpha^q(x)}\right)\, dx\right\vert \leq & C\int_{B_0(\delta r_\alpha)}\theta_\alpha^{-1}(x)\,dx \\
\leq  &  C(\delta r_\alpha)^{n-1}.
\end{array}
\end{equation}
For $J_3$, we obtain
\begin{equation}
\begin{array}{r l}
\left\vert \int_{B_0(\delta r_\alpha)}\frac{\vert\mathcal{L}_\xi Z_\alpha(x) \vert^2}{B_\alpha^q(x)}\,dx\right\vert \leq& \int_{B_0(\delta r_\alpha)}\left(\frac{\mu_\alpha}{r_\alpha} \right)^{2n-2}\mu_\alpha^{-n}\theta_\alpha^n(x)  \,dx \\
\leq & C\left(\frac{\mu_\alpha}{r_\alpha} \right)^{n-2}r_\alpha\delta^{2n},
\end{array}
\end{equation}
while for $J_4$, we get
\begin{equation}
\begin{array}{r l}
\left\vert \int_{B_0(\delta r_\alpha)}\frac{\langle\nabla B_\alpha(x), \tilde{Y}_\alpha(x) \rangle^2}{B_\alpha^{q+3}(x)}\,dx\right\vert \leq& \int_{B_0(\delta_\alpha r_\alpha)}\theta_\alpha^{2n-2}(x)\mu_\alpha^{-n}\,dx\\
\leq & C\left(\frac{\mu_\alpha}{r_\alpha} \right)^{n}\left(\frac{r_\alpha^2}{\mu_\alpha} \right)^{2n}r_\alpha^{-2}\delta^{3n-2}.
\end{array}
\end{equation}
For $J_2$, lengthy, yet straightforward computations as those seen in \cite{Val19} lead to
\begin{equation}
J_2=o\left(\frac{\mu_\alpha}{r_\alpha} \right)^{n-2}.
\end{equation}
\par We conclude that
\begin{equation}
H(0)=o\left(\frac{\mu_\alpha}{r_\alpha} \right)^{n-2}(1+O(\delta)),\quad \forall \alpha,\quad \forall \delta>0,
\end{equation}
and thus $H(0)=0$.
\smallskip

\subsection{Stability theorem proof}
Consider the sets $\mathcal{S}_\alpha$ and let
\begin{equation}
16\delta_\alpha:=\min_{1\leq i<j\leq N_\alpha}\vert x_{i,\alpha}- x_{j,\alpha}\vert.
\end{equation}
We first prove that $\delta_\alpha\to 0$ as $\alpha\to +\infty$. Assuming that the contrary holds, we can apply the results of Lemma \ref{first blow up} with $x_\alpha=x_{1,\alpha}$ and $\rho_\alpha=\delta$ for some $\delta>0$ fixed. This contradicts \eqref{influence}. We reorder the elements of the sets $S_\alpha$ in order of distance, so that
\begin{equation}
16\delta_\alpha=\vert x_{1,\alpha}- x_{2,\alpha}\vert.
\end{equation}
 For $R>1$, let $1\leq M_{R,\alpha}$ be such that
\begin{equation}
\begin{array}{c}
\vert x_{1,\alpha}- x_{i_\alpha,\alpha}\vert \leq R\delta_\alpha\quad\text{ for }\quad i_\alpha\in\{1,\dots,M_{R,\alpha}\},\\
\vert x_{1,\alpha}- x_{i_\alpha,\alpha}\vert > R\delta_\alpha\quad\text{ for }\quad i_\alpha\in\{M_{R,\alpha}+1,\dots,N_\alpha\}.
\end{array}
\end{equation}
For $x\in B_0(8\delta_\alpha)$, we define the rescaled quantities
\begin{equation}
\check{v}_\alpha(x):=\delta_\alpha^{\frac{n-2}{2}}\varphi_\alpha(\delta_\alpha x)u_\alpha\circ \Phi^{-1}_\alpha(\delta_\alpha x)
\end{equation}
and 
\begin{equation}
\check{Z}_\alpha(x)=\delta_\alpha^{n-1}\varphi_\alpha^{-q+2}(\delta_\alpha x)(\Phi_\alpha)_* W_\alpha(\delta_\alpha x).
\end{equation}
In the exponential chart, the elements of $\mathcal{S}_\alpha$ become
\begin{equation}
\check{x}_{i,\alpha}:=\delta_\alpha^{-1}\exp_{x_{1,\alpha}}^{-1}(x_{i,\alpha}),
\end{equation}
where $1\leq i\leq N_i$. Note that $B_{x_{i,\alpha}}\left(8\delta_\alpha\right)$ and $B_{x_{j,\alpha}}\left(8\delta_\alpha\right)$ are disjoint. We define two types of concentration points : the first
\begin{equation}
\sup_{B_{\check{x}_{i,\alpha}}\left(8\right)}\left(\check{v}_\alpha(x)^q+\left\vert\frac{\nabla \check{v}_\alpha(x)}{\check{v}_\alpha(x)} \right\vert^n+\left\vert\frac{\nabla^2 \check{v}_\alpha(x)}{\check{v}_\alpha(x)} \right\vert^\frac{n}{2}+\vert \mathcal{L}_\xi \check{Z}_\alpha(x)\vert \right)=O(1)
\end{equation}
and the second
\begin{equation}
\sup_{B_{\check{x}_{i,\alpha}}\left(8\right)}\left(\check{v}_\alpha(x)^q+\left\vert\frac{\nabla \check{v}_\alpha(x)}{\check{v}_\alpha(x)} \right\vert^n+\left\vert\frac{\nabla^2 \check{v}_\alpha(x)}{\check{v}_\alpha(x)} \right\vert^{\frac{n}{2}}+\vert \mathcal{L}_\xi \check{Z}_\alpha(x)\vert \right)\to\infty.
\end{equation}
\par\textbf{A cluster with only the first type of points, \textit{i.e.} where all bubbles are of a comparable size.} Assume $\check{x}_{i,\alpha}$ corresponds to the first type. Since for all $j\leq M_{R,\alpha}$,
\begin{equation}
\vert \check{x}_{i,\alpha}-\check{x}_{j,\alpha}\vert^{\frac{n-2}{2}}\check{v}_\alpha(\check{x}_{i,\alpha})\geq 1,
\end{equation}
then
\begin{equation}
\check{v}({\check{x}_{i,\alpha}})\geq 2C(R).
\end{equation}
Since $\check{v}_\alpha$ is uniformly bounded in $\mathcal{C}^2$, there exists $r_i>0$ such that
\begin{equation}
\inf_{B_{\check{x}_{i,\alpha}}(r_i)}\check{v}_\alpha \geq C(R).
\end{equation}
By following the arguments of Lemmas \ref{lem: first estimate} and \ref{first blow up}, there exists a $\mathcal{C}^2(B_0(R))$ limit, \begin{equation}
\check{v}=\lim_{\alpha\to\infty}\check{v}_\alpha
\end{equation}
such that
\begin{equation}
\Delta_\xi \check{v}=f(0)\check{v}^{q-1};
\end{equation}
since $\check{v}$ has at least two maxima, this leads to a contradiction. 
\par \textbf{A cluster with both type of points, \textit{i.e.} where there exists at least one pair of bubbles such that one is much greater than the other.} Around the second type of concentration point, we consider two cases:
either
\begin{equation}
\sup_{B_{\check{x}_{j,\alpha}}\left(8\right)} \check{v}_\alpha(x)\leq M\quad \text{ and }\quad \sup_{B_{\check{x}_{j,\alpha}}\left(8\right)}\left\vert\frac{\nabla \check{v}_\alpha(x)}{\check{v}_\alpha(x)} \right\vert^n+ \left\vert\frac{\nabla^2 \check{v}_\alpha(x)}{\check{v}_\alpha(x)} \right\vert^{\frac{n}{2}}+\vert \mathcal{L}_\xi \check{Z}_\alpha(x)\vert\to\infty
\end{equation}
or 
\begin{equation}
\sup_{B_{\check{x}_{j,\alpha}}\left(8\right)} \check{v}_\alpha(x)\to \infty.
\end{equation}
By similar arguments to those of Lemma \ref{first blow up}. From Lemma \ref{profile on Br}, we know that
\begin{equation}
\vert \check{v}_\alpha-\check{B}_\alpha\vert =o(\delta_\alpha^{\frac{n-2}{2}}).
\end{equation}
where
\begin{equation}
\check{B}_\alpha(x)= \check{\mu}_\alpha^{\frac{n-2}{2}}\left(\check{\mu}_\alpha^{\frac{n-2}{2}}-\frac{\check{f}({\check{x}_{i,\alpha}})}{n(n-2)}\vert x\vert^2\right)
\end{equation}
with
\begin{equation}
\check{\mu}_\alpha=\frac{\mu_\alpha}{\delta_\alpha}=\check{u}_\alpha(\check{x}_{i,\alpha})^{-q+2}.
\end{equation}
Up to a subsequence,
\begin{equation}
\check{u}_\alpha(\check{x}_{j,\alpha})\check{u}_\alpha(x)\to\frac{\lambda_i}{\vert x -\check{x}_j \vert^{n-2}}+H_j(x)
\end{equation}
in $B_{\check{x}_i}\left(\frac{1}{2}\setminus \{\check{x}_j\}\right)$, with $\lambda_j>0$, where $H_j$ is superharmonic in $B_{\check{x}_{j,\alpha}}\left(\frac{1}{2} \right)$ with $H(\check{x}_i)=0$. This means that $\check{u}_\alpha\to 0$ in $\mathcal{C}^0\left(B_{\check{x}_{i,\alpha}}\left(\frac{1}{2}\right)\setminus B_{\check{x}_{i,\alpha}}\left(\frac{1}{4} \right)\right)$ By the Harnack type result, Lemma \ref{harnack lemma}, we get a contradiction.
\par\textbf{A cluster with only the second type of points.}
Let $\check{G}_\alpha(x,\cdot)$ be the Green function of the operator $\Delta_\xi+\delta_\alpha^2 \check{h}_\alpha$ in $B_x(3R)$. It converges to the Green function of $\Delta_\xi$ in $\mathcal{C}^1_{loc}(B_x(3R)\setminus\{x\})$. Since $\Delta_\xi+h_0$ is coercive, for any $y\in B_x(2R)$, and since $Y_\alpha\to 0$,
\begin{equation}
\begin{array}{r l}
\check{u}_\alpha(x) \geq & \int_{B_0\left(\frac{1}{2}\right)}\check{G}_\alpha(x,y)\check{f}_\alpha(y)\check{v}_\alpha^{q-1}(y)\,dy   \\
 & +\int_{B_{\check{x}_{2,\alpha}}\left(\frac{1}{2}\right)}\check{G}_\alpha(x,y)\check{f}_\alpha(y)\check{v}_\alpha^{q-1}(y)\,dy.
\end{array}
\end{equation}
This yields
\begin{equation}
\check{v}_\alpha(x)\geq (1+o(1))(\check{B}_{1,\alpha}(x)+\check{B}_{2,\alpha}(x))-\frac{C}{R^{n-2}}\left(\mu_{1,\alpha}^{\frac{n-2}{2}}+\mu_{2,\alpha}^{\frac{n-2}{2}} \right)
\end{equation}
For $\vert x\vert\leq \frac{1}{4}$, $x\not = 0$, we approximate the RHS with $\check{B}_{1,\alpha}$ to get
\begin{equation}
\left(\frac{\mu_{1,\alpha}}{\mu_{2,\alpha}} \right)^{\frac{n-2}{2}}\vert x\vert ^{n-2}\left(\vert x-\check{x}_2\vert^{2-n}-CR^{2-n}+o(1) \right)\leq o(1)+\frac{C}{R^{n-2}}\vert x\vert^{2-n}
\end{equation}
We divide the previous equation by $\vert x\vert$ and take $x\to 0$ to get, for $R$ large,
\begin{equation}
\limsup_{\alpha\to\infty}\left(\frac{\mu_{1,\alpha}}{\mu_{2,\alpha}} \right)^\frac{n-2}{2}\leq C\frac{16^{n-2}}{R^{n-2}-C16^{n-2}}.
\end{equation}
By switching the roles of $\check{x}_{1,\alpha}$ and $\check{x}_{2,\alpha}$, we obtain
\begin{equation}
\limsup_{\alpha\to\infty}\left(\frac{\mu_{2,\alpha}}{\mu_{1,\alpha}} \right)^\frac{n-2}{2}\leq C\frac{16^{n-2}}{R^{n-2}-C16^{n-2}}.
\end{equation}
This is a contradiction.
\end{proof}

\section{Discussion: Is the drift model a better alternative?}

 We recall that not much is known about far-from-CMC solutions. The classical conformal method seems to display a number of singularities, and these singularities are sometimes difficult to find \textit{a priori} without first solving the corresponding conformal system \cite{Max11,Max15}. 
\par  As we've discussed in the introduction, an advantage of the drift model is that the singularities identified by Maxwell can be found in \textit{a priori} known conformal data sets - \textit{i.e} when the volumetric momentum is null. 
\par Apart from being more natural from a physical and geometrical point of view, another feature of Maxwell's model is that it prescribes more than $10$ parameters. At first glance, it ``over-describes" the initial data. An important idea underlying the works presented in the sequel is the hope to use these four additional parameters to ``tilt'' the coordinate system (the other ten parameters) in the neighbourhood of a singularity. Another way to think about this is that the $10$-dimensional manifold of initial data cannot accurately be covered by only one chart; by changing the additional drift parameters whenever we approach of singularity, we essentially switch to a different chart. In this way, we might prove that the set of solutions to the constraint equations does not possess any real singularity, but only ones due to the choice of coordinates. Naively, one might think of a curve having a vertical tangent which is not well parametrized by its x-axis. The price we pay is that the drift system is analytically much more complicated than the classical one.

\par The goal is to find a viable alternative to the conformal method that gives insight into the structure of the set of solutions of the constraint equations. The drift method proposed by Maxwell provides a promising way forward. The following steps may begin to provide a way forward, in order to achieve this goal:
\begin{description}
\item[a.] \textit{Existence for small data}. Verify that Maxwell's system is reasonable, in the sense that it can be solved even in the case of focusing non linearities. An immediate consequence is that the set of solutions is non-empty. For the non-focusing case, existence is proved in \cite{HolMaxMaz18}, whereas for the focusing case, we cite \cite{Val19}.
\item[b.] \textit{Stability}. Check that, given a perturbation of the coefficients, the set of solutions to the perturbed system is bounded. One might always extract a sequence that converges to a solution of the limiting system. This is the purpose of chapter 3.

\item[c.] \textit{The study of bifurcations}. This is where the extra parameters of Maxwell's method might come into play, by allowing us the freedom to continuously change our mapping as needed. Indeed, as proved by Premoselli \cite{Pre15}, there is no hope that a single choice of $N$ and $V$ lead to a nice smooth parametrisation of the set of solutions. Bifurcations must occur. Even in the defocusing case, such bifurcations can occur, as shown by James Dilts, Michael Holst and David Maxwell \cite{DilHolMax17}. Thus, tilting the coordinates (the parameters) in a neighbourhood of these bifurcations is a way to understand them and the extra parameters give an opportunity to do so. 

\begin{figure}
  \begin{center}
    \begin{tikzpicture}

      \draw [thick, ->] (1,1) -- (1,6);
      \draw [thick, ->] (1,1) -- (7,1) node [below] {10 parameters {\color{purple}(+3)}}; 
    \draw (1,1) .. controls ++(20:0) and ++(-90:0.5) .. (6,2.5) ..  controls ++(90:0.5) and ++(-90:0.5) .. (4,3.5)  .. controls ++(90:0.5) and ++(-90:0.5) .. (6.3,4.3) .. controls ++(90:0.5) and ++(-90:0.5) .. (4.3,5) .. controls ++(90:0.5) and ++(-90:0.2) .. (6.7,5.5);
    \draw [dashed] (6,1) -- (6,6);
    \draw [->, thick, purple] (5.7,1) -- (28:7cm);
    \end{tikzpicture}
    \caption{Initial data manifold, parametrized by the drift method.}
  \end{center}
\end{figure}
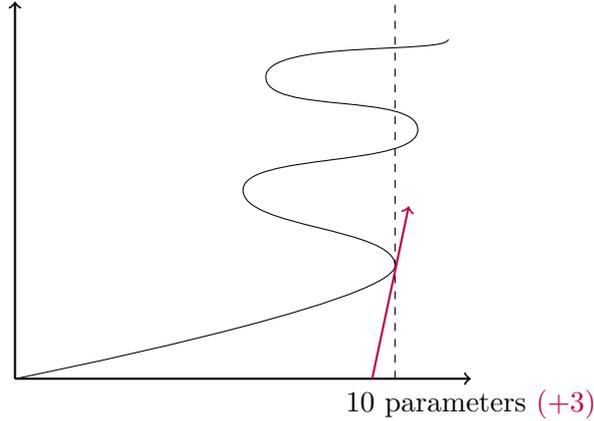

We summarize this program with the help of the following figure. Point a. allows us to start the process of proving that solutions exist for small parameters. Point b. roughly says that the only problem could come from bifurcations corresponding to folding (at least for the parameters for which stability holds). We rule out vertically asymptotic branches. Part c. consists intuitively in tilting the coordinates with the four added parameters, as shown in the figure. These three steps should permit to obtain a nice smooth description of the set of solutions.

\end{description}

\section{Annex}
\subsection{The drift model (continued)}\label{drift method continued}
In order to have a better understanding of the drift method, we recall a basic fact of differential geometry: any metric is uniquely identified by its conformal class together with its volume form. In fact,
\begin{equation}
\mathcal{M}=\mathcal{C}\times\mathcal{V},
\end{equation}
where $\mathcal{M}$ is the space of metrics, $\mathcal{V}$ is the space of volume forms and $\mathcal{C}$ is the space of conformal classes. In the context of the Einstein equations, it makes sense to consider $\mathcal{M}$, $\mathcal{C}$ and $\mathcal{V}$ modulo diffeomorphisms $\mathcal{D}_0$, with $\mathcal{D}_0$ the connected component of the identity in the diffeomorphism group.
\par In his papers, Maxwell describes in great detail how the spaces $\mathcal{M}$, $\mathcal{C}$ and $\mathcal{V}$, together with their tangent, cotangent and quotient spaces, are represented within the choice of parameters \cite{Max14a,Max14b}. The conformal momentum $\mathbf{U}$, for example, is shown to be an element of $T_\mathbf{g}\left(\mathcal{C}\backslash\mathcal{D}_0\right)$. By this interpretation, it becomes clear that $\mathcal{C}$ is prioritized over $\mathcal{V}$ when it comes to choices of parameters.
\par In a 2014 paper, Maxwell introduces a variant to the standard conformal method \cite{Max14b}. Very succinctly, the drift model differs from its predecessor in that it replaces the mean curvature $\tau$ with two new conformal data, a volumetric momentum and a drift. These new quantities are defined by the volumetric equivalent to the York splitting \cite{Max15}:
\begin{equation}
\tau=\tau^*+\frac{1}{N_{\hat{g},\omega}}div (V+Q)
\end{equation}
where $\tau^*\in\R$, $V$ is a smooth vector field and $Q$ is a conformal Killing field.  The volumetric momentum $\tau^*$ as measured by $\omega$ is uniquely determined and can be rewritten as
\begin{equation}\label{volumetric momentum0}
\tau^*=\frac{\int_M N_{\hat{g},\omega}\tau\,dV_{\hat{g}}}{\int_M N_{\hat{g},\omega}\,dV_{\hat{g}}}. 
\end{equation}
The vector field $V$ is uniquely determined up to a $\hat{g}$ divergence-free vector field. 
\par As we explain above, $\tau^*=0$ seems to be a common property of the known non-CMC cases of an infinity of solutions corresponding to the same data set. The drawback of the classical conformal method is that the value of $\tau^*$ cannot be calculated \textit{a priori} from a choice of representatives. One needs to first solve the corresponding system, as 
\begin{equation}
\tau_*=\tau_*(g,u)=\frac{\int_M u^{2q}N_{g,\omega}\tau\,dV_g}{\int_M u^{2q}N_{g,\omega}\,dV_g}.
\end{equation}
Coming back to $(\mathcal{Q}_1)$, this is an argument against the classical conformal model.
\par The volumetric momentum $[g,\tau]_\alpha$ as measured by $\omega$ is $-2\frac{n-1}{n}\tau_*$. A drift $[V]_g^{\text{drift}}$ at $g$ is the equivalence class of $V$, modulo $\text{Ker} \mathcal{L}_g$ and $\text{Ker}\,div_g$. The space of drifts at $g$ is denoted as $\text{Drift}_g$. David Maxwell introduces the concept of drift as an infinitesimal motion in the space of metrics, modulo diffeomorphisms, that preserves conformal class, up to a diffeomorphism, and the volume form, also up to a diffeomorphism.
\par Assumming that $\mathbf{g}$ admits no non-trivial conformal Killing field and therefore that $Q\equiv 0$, one can obtain the initial data $(\hat{g}_{ab},\hat{K}_{ab})$ from a conformal data set, given a gauge $\omega$, as follows.
\begin{enumerate}
\item Choose an arbitrary representative $g_{ab}\in\mathbf{g}$.
\item Choose the unique densitized lapse $N_{g,\omega}$.
\item Choose the unique TT-tensor $U_{ab}$ such that $(g_{ab},U_{ab})=\mathbf{U}$, where $\mathbf{U}$ is the conformal momentum as measured by $\omega$.
\item Choose a vector field $\tilde{V}$, unique up to a conformal Killing field, such that $(g_{ab},\tilde{V}^a)=\mathbf{V}$, where $\mathbf{V}$ is the volumetric drift measured by $\omega$. We use the tilde to differentiate the drift from the potential, while still staying true to Maxwell's initial notation.
\end{enumerate}
Both $u$ and $W$ are unknown. We write
\begin{equation}
\begin{array}{r l}
 \hat{g}_{ab}&=u^{q-2}g_{ab}  \\
 \hat{K}_{ab}&=u^{-2}[\frac{1}{2N_{g,\omega}}(\mathcal{L}_gW)_{ab}+U_{ab}]+\frac{1}{n}u^{q-2}g_{ab}\left(\tau^*+\frac{1}{N_{g,\omega}}div(\tilde{V})\right).
\end{array}
\end{equation}
Plug these quantities into the constraint equations to obtain
\begin{equation}\label{syst of Maxwell0}
\begin{array}{r l}
\Delta_g u+\frac{n-2}{4(n-1)}(R(g)+\vert\nabla\psi\vert_g^2)u=& \frac{(n-2)\vert U+\mathcal{L}_g W\vert^2+\pi^2}{4(n-1)u^{q+1}}\\ 
&+\frac{n-2}{4(n-1)}[2V(\psi)-\frac{n-1}{n}\left(\tau^*+\frac{div_g(u^q\tilde{V})^2}{N_{g,\omega} u^{2q}}\right)]u^{q-1} \\
div_g\left(\frac{1}{2 N_{g,\omega}}\mathcal{L}_g W\right)=&\frac{n-1}{n}u^q d	\left(\frac{div_g(u^{q}\tilde{V})}{2N_{g,\omega}u^{2q}}\right)+\pi\nabla\psi=0.
\end{array}
\end{equation}
The following table regroups for $n=3$ the conformal data and their dimensions (columns 2 and 3), the expressions of physical data as functions of representatives of conformal data (column 1) and the dimensions of the remaining unknowns (column 4).
\renewcommand\arraystretch{1.7}
\begin{equation}
\begin{array}{|*4{>{\displaystyle}c|}}
  \hline%
  \text{Physical data} & \text{Parameters} & \text{Dimensions} & \text{Unknowns}\\ \hline 
  \hat{g}=u^{q-2}g & \mathbf{g} & 5 & 1\\ \hline
   \begin{tabular}{l}
    $\hat{K}_{ab}=u^{-2}[\frac{1}{2 N_{g,\omega}}(\mathcal{L}_gW)_{ab}+U_{ab}]$  \\
    $\quad+\frac{1}{n}u^{q-2}g_{ab}\left(\tau^*+\frac{1}{N_{g,\omega}}div(V)\right)$ 
   \end{tabular}
    & \mathbf{U},\tau^*,\mathbf{N},\mathbf{V} & 2+1+3&3\\[2pt] \hline
  \hat{\psi}=\psi & \psi & 1 & 0 \\ \hline
  \hat{\pi}=u^{-q}\pi & \pi & 1 & 0 \\ \hline
\end{array}
\end{equation}
This time, we obtain additional parameters. More on this in the following section.
\subsection{Standard elliptic theory for the Lamé operator}
If $X$ is a $1$-form in $M$, the Lamé operator is written in coordinate form as:
\begin{equation}
\overrightarrow{\Delta_g}X_i=\nabla^j\nabla_j X_i+\nabla^j\nabla_iX^j-\frac{2}{n}\nabla_i\left(div_g X\right).
\end{equation}
The operator $\overrightarrow{\Delta}_g$ is uniformly elliptic on $M$. It satisfies the strong ellipticity condition (also known as the Legendre-Hadamard condition): for any $x\in M$ and any $\eta\in T_x^* M$ :
\begin{equation}
(L(x,\xi)\eta)_i\eta^i=\vert\xi\vert^2_g\vert\eta\vert^2_g+\left(1-\frac{2}{n}\right)\vert\langle\xi,\eta\rangle\vert^2_g\geq\vert\xi\vert^2_g\vert\eta\vert_g^2.
\end{equation}
The Lamé operator is self-adjoint on $H^1(M)$ on any closed manifold $M$, since by integration by parts one gets, for any $1$ forms $X$ and $Y$,
\begin{equation}
\int_M\langle\overrightarrow{\Delta}_g X,Y\rangle_g\,dv_g=\frac{1}{2}\int_M\langle\mathcal{L}_gX,\mathcal{L}_g Y\rangle_g\,dv_g.
\end{equation}
This implies that for any $1$-form $X$ on $M$,
\begin{equation}
\overrightarrow{\Delta}_g X=0\iff \mathcal{L}_g X=0.
\end{equation}
The standard elliptic theory for (self-adjoint) strongly elliptic operators acting on vector bundles on a compact manifold apply (see Theorem 5.20 in Giaquinta-Martinazzi):
\begin{prop} For any $p>1$, there exists constants $C_1=C_1(g,p)$ and $C_2=C_2(g,p)$ such that for any $1$-form $X$ in $M$:
\begin{equation}
\vert\vert X\vert\vert_{W^{2,p}(M)}\leq C_1\vert\vert\overrightarrow{\Delta}_g X\vert\vert_{L^p(M)}+C_2\vert\vert X\vert\vert_{L^1(M)}.
\end{equation}
In addition, $X$ satisfies
\begin{equation}
\int_M\langle X,K\rangle_g\,dv_g=0
\end{equation}
for all conformal Killing $1$-forms $K$, then we can choose $C_2=0$.
\end{prop}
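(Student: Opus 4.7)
The plan is standard once we exploit the strong ellipticity (Legendre--Hadamard condition) of $\overrightarrow{\Delta}_g$ noted just before the statement, together with the compactness of $M$ and the characterization $\ker\overrightarrow{\Delta}_g=\{\hbox{conformal Killing }1\hbox{-forms}\}$. The argument proceeds in three stages.

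\textbf{Stage 1: local-to-global $W^{2,p}$ estimate with an $L^p$ remainder.} First I would cover $M$ by finitely many coordinate charts trivializing $T^*M$ and choose a subordinate partition of unity $\{\chi_i\}$. On each chart, in local coordinates, $\overrightarrow{\Delta}_g$ is a second order linear system with smooth coefficients satisfying the Legendre--Hadamard condition. The Agmon--Douglis--Nirenberg (equivalently Calder\'on--Zygmund) theory therefore provides interior $W^{2,p}$ estimates for each localized 1-form $\chi_i X$. Commuting $\chi_i$ past $\overrightarrow{\Delta}_g$ produces only first order terms, absorbed via interpolation $\|\nabla X\|_{L^p}\le\varepsilon\|\nabla^2 X\|_{L^p}+C_\varepsilon\|X\|_{L^p}$. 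Summing over $i$ yields the intermediate bound
\begin{equation*}
\|X\|_{W^{2,p}(M)}\le C\bigl(\|\overrightarrow{\Delta}_g X\|_{L^p(M)}+\|X\|_{L^p(M)}\bigr).
\end{equation*}

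\textbf{Stage 2: upgrading $L^p$ to $L^1$ on the right-hand side.} I would argue by contradiction. If no pair $(C_1,C_2)$ works, there exists a sequence $X_k$ with $\|X_k\|_{W^{2,p}}=1$ and $\|\overrightarrow{\Delta}_g X_k\|_{L^p}+\|X_k\|_{L^1}\to 0$. By Rellich--Kondrachov, $W^{2,p}(M)\hookrightarrow L^p(M)$ is compact, so up to a subsequence $X_k\to X_\infty$ in $L^p$. Since $\|X_k\|_{L^1}\to0$, the limit satisfies $X_\infty=0$. Plugging back into the Stage 1 estimate applied to $X_k$,
\begin{equation*}
1=\|X_k\|_{W^{2,p}}\le C\bigl(\|\overrightarrow{\Delta}_g X_k\|_{L^p}+\|X_k\|_{L^p}\bigr)\longrightarrow 0,
\end{equation*}
a contradiction. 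This establishes the first, general inequality.

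\textbf{Stage 3: removing the $L^1$ term under the orthogonality condition.} Let $\mathcal{K}$ denote the (finite-dimensional) space of conformal Killing 1-forms; the text already records that $\ker\overrightarrow{\Delta}_g=\mathcal{K}$ via self-adjointness and the identity $\int_M\langle\overrightarrow{\Delta}_gX,X\rangle\,dv_g=\frac12\int_M|\mathcal{L}_gX|^2\,dv_g$. Assume no constant $C_1$ works when $X\perp_{L^2}\mathcal{K}$. Then one finds $X_k$ with $\|X_k\|_{W^{2,p}}=1$, $\|\overrightarrow{\Delta}_g X_k\|_{L^p}\to 0$ and $\langle X_k,K\rangle_{L^2}=0$ for every $K\in\mathcal{K}$. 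Using Stage 2, the sequence is bounded and, up to extraction, $X_k\to X_\infty$ in $L^p$ (hence in $L^1$, since $M$ is compact). The weak limit $X_\infty$ lies in $W^{2,p}$, satisfies $\overrightarrow{\Delta}_g X_\infty=0$ distributionally, and is thus a conformal Killing 1-form. But the orthogonality passes to the limit: $\langle X_\infty,K\rangle_{L^2}=0$ for all $K\in\mathcal{K}$, including $K=X_\infty$, so $X_\infty\equiv 0$. Then $\|X_k\|_{L^1}\to 0$, and Stage 2 applied to $X_k$ gives $1=\|X_k\|_{W^{2,p}}\to 0$, a contradiction. Hence $C_2=0$ is admissible in this subspace.

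\textbf{Main obstacle.} No step is genuinely hard: Stage 1 is a matter of carefully invoking the ADN theory for systems, which requires checking the Legendre--Hadamard condition (supplied in the text) rather than only scalar ellipticity; this is the only point where one must resist the temptation to treat $\overrightarrow{\Delta}_g$ componentwise as a scalar Laplacian. The contradiction arguments in Stages 2 and 3 are routine provided one is careful about passing orthogonality to the limit using the compact embedding.
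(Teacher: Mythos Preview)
Your argument is correct and is precisely the standard route one would take to establish this kind of $W^{2,p}$ estimate for a strongly elliptic self-adjoint system on a closed manifold. There is nothing to compare on the paper's side: the proposition is not proved in the text but simply quoted as a consequence of standard elliptic theory for strongly elliptic operators on vector bundles, with a reference to Theorem~5.20 in Giaquinta--Martinazzi. Your Stages~1--3 supply exactly the details such a citation stands in for: local ADN/Calder\'on--Zygmund estimates under the Legendre--Hadamard condition (already verified in the paragraph preceding the proposition), followed by the usual two compactness/contradiction arguments to trade $L^p$ for $L^1$ and then to drop the lower-order term on the orthogonal complement of the kernel $\mathcal{K}=\ker\overrightarrow{\Delta}_g$.
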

We now turn to the case of $\R^n$. For any $1\leq i\leq n$, we define the $1$-form $\R^n\setminus\{0\}$ by:
\begin{equation}
\mathcal{G}_i(y)_j=-\frac{1}{4(n-1)\omega_{n-1}}\vert y\vert^{2-n}\left((3n-2)\delta_{ij}+(n-2)\frac{y_i y_j}{\vert y\vert^2}\right)
\end{equation}
for any $y\not= 0$. Note that the matrices $(\mathcal{G}_i(y)_j)_{ij}$ thus defined are symmetric: for any $y\not=0,$
\begin{equation}
\mathcal{G}_i(y)_j=\mathcal{G}_j(y)_i.
\end{equation}
Let $X$ be a field of $1$-form in $\R^n$. For any $R>0$ and for any $x\in B_0(R)$ there holds:
\begin{equation}
\begin{array}{r l}
 X_i(x)=&\int_{B_0(R)}\mathcal{G}_i(x-y)_j\overrightarrow{\Delta}_\xi X(y)^j\,dx \\&+\int_{\partial B_0(R)}\mathcal{L}_\xi X(y)^{kl}\nu_k(y)\mathcal{G}_i(x-y)_l\,d\sigma  \\
 & -\int_{\partial B_0(R)}\mathcal{L}_\xi\left(\mathcal{G}_i(x-\cdot)\right)_{kl}(y)\nu(y)^k X(y)^l\,d\sigma.
\end{array}
\end{equation}
If $Y$ is a smooth $1$-form in $L^1(\R^n)$, then
\begin{equation}
W_i(x)=\int_{\R^n}\mathcal{G}_i(x-y)_j Y^j(y)\,dy=(\mathcal{G}*Y)_i(x)
\end{equation}
satisfies 
\begin{equation}
\overrightarrow{\Delta}_\xi W_i(x)=Y_i(x).
\end{equation}
The system \eqref{systemshort2} is invariant up to adding a conformal Killing $1$-form in $M$ to $W_\alpha$. Let
\begin{equation}
K_R=\{X\in H^1(M)\left(B_0(R)\right),\mathcal{L}_\xi X=0\}
\end{equation}
is the subspace of $1$-forms associated to the kernel to the Neumann problem for $\Delta_\xi$ in $B_0(R)$. The $H^1$ orthogonal space is defined as the space of $1$-forms $Y\in H^1\left(B_0(R)\right)$ such that for any $X\in K_R$:
\begin{equation}
\int_{B_0(R)}\langle Y,K\rangle_\xi\,dx=0.
\end{equation}
For any $1$-form $X\in B_0\left(B_0(R)\right)$, we define the orthogonal projection on $K_R$ by
\begin{equation}
\pi_R(X)=\sum_{j=1}^m\left(\int_{B_0(R)}\langle K_j, X\rangle\,dx\right)K_j.
\end{equation}
The existence of Green $1$-forms satisfying Neumann boundary conditions:
\begin{prop}
For any $1\leq i\leq n$ and any $R>0$, there exists a unique $\mathcal{G}_{i,R}$ defined in $B_0(R)\times B_0(R)\setminus D$, where $D=\{(x,x),x\in B_0(R)\}$ there holds:
\begin{equation}
\begin{array}{r l}
 \left(X-\pi_R(X)\right)_i(x)=&\int_{B_0(R)}\mathcal{G}_{i,R}(x,y)_j\overrightarrow{\Delta}_\xi X(y)^j\,dx  \\
 & +\int_{\partial B_0(R)}\mathcal{L}_\xi X(y)^{kl}\nu_k(y)\mathcal{G}_{i,R}(x,y)_k\,d\sigma.
\end{array}
\end{equation}
Moreover, $\mathcal{G}_{i,R}$ is continuously differentiable in $B_0(R)\times B_0(R)\setminus D$. Furthermore, if $K$ denotes any compact set in $B_0(R)$, there holds for any $x,y\in M$
\begin{equation}
\vert x-y\vert \vert\nabla \mathcal{G}_{i,R}(x,y)\vert +\vert\mathcal{G}_{i,R}(x,y) \vert\leq C(\delta)\vert x-y\vert^{2-n},
\end{equation}
where 
\begin{equation}
\delta=\frac{1}{R}d\left(K, \partial B_0(R)\right)>0.
\end{equation}
\end{prop}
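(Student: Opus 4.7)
I would construct $\mathcal{G}_{i,R}$ as a free-space fundamental solution plus a smooth corrector. Writing
$$
\mathcal{G}_{i,R}(x,y) = \mathcal{G}_i(x-y) + H_{i,R}(x,y),
$$
comparison of the target representation formula with the free-space one stated just above the proposition forces the corrector $H_{i,R}(x,\cdot)$ to solve the Neumann-type boundary value problem
$$
\overrightarrow{\Delta}_\xi H_{i,R}(x,\cdot) = -\pi_R(e_i\delta_x) \text{ in } B_0(R), \qquad \mathcal{L}_\xi H_{i,R}(x,\cdot)\cdot\nu = -\mathcal{L}_\xi \mathcal{G}_i(x-\cdot)\cdot\nu \text{ on } \partial B_0(R),
$$
where $\pi_R(e_i\delta_x) := \sum_k K_k(x)_i\,K_k$ denotes the natural projection of the Dirac source onto $K_R$, expressed in an $L^2$-orthonormal basis $(K_k)$ of the finite-dimensional kernel. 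Once this corrector is produced, the target identity follows by applying Green's second identity for $\overrightarrow{\Delta}_\xi$ to $X$ and $\mathcal{G}_{i,R}(x,\cdot)$ on $B_0(R)\setminus B_x(\varepsilon)$: letting $\varepsilon\to 0$ isolates the singular contribution of $\mathcal{G}_i$ inside $B_x(\varepsilon)$ to extract $X_i(x)$, the bulk term picks up $-\pi_R(X)_i(x)$ via duality, and the boundary integral in $X$ alone vanishes by construction of the Neumann datum.

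The main obstacle is the compatibility condition for the Neumann problem: since $K_R$ is a nontrivial kernel, the data must be orthogonal to $K_R$. I would check this directly. For $K\in K_R$, both $\overrightarrow{\Delta}_\xi K=0$ and $\mathcal{L}_\xi K=0$, so Green's identity reduces compatibility to
$$
-\int_{B_0(R)}\langle K,\pi_R(e_i\delta_x)\rangle\,dy \;=\; \int_{\partial B_0(R)}\langle K,\mathcal{L}_\xi \mathcal{G}_i(x-\cdot)\cdot\nu\rangle\,d\sigma.
$$
The left side equals $-K(x)_i$ by orthonormality of $(K_k)$; the right side also equals $-K(x)_i$ by applying the free-space representation formula to $K$ itself, the two other boundary terms being killed by $\overrightarrow{\Delta}_\xi K=0$ and $\mathcal{L}_\xi K=0$. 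Hence compatibility holds, and the Neumann problem is solvable via the Fredholm theory associated with the elliptic regularity proposition stated just above this one in the annex. Uniqueness of $\mathcal{G}_{i,R}$ is then forced by imposing $H_{i,R}(x,\cdot)\perp K_R$ in $L^2$, which is the natural normalization compatible with the presence of $\pi_R$ on the left.

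Smoothness of $H_{i,R}$ in $y\in B_0(R)$ follows from the same regularity proposition, bootstrapped in $W^{2,p}$ and Schauder spaces: the bulk datum $-\pi_R(e_i\delta_x)$ is a smooth linear combination of the basis fields $K_k$, and the Neumann datum $-\mathcal{L}_\xi\mathcal{G}_i(x-\cdot)\cdot\nu$ is smooth on $\partial B_0(R)$ because the singularity sits at $x\in B_0(R)$, which is separated from the boundary. Joint continuity in $(x,y)$, and hence continuous differentiability off the diagonal, then follow from continuity of the data maps $x\mapsto(-\pi_R(e_i\delta_x),\,-\mathcal{L}_\xi \mathcal{G}_i(x-\cdot)\cdot\nu)$ in the norms required by the elliptic estimate.

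For the pointwise bound, I would treat the two pieces separately. The explicit formula for the fundamental solution gives at once
$$
|x-y|\,|\nabla \mathcal{G}_i(x-y)| + |\mathcal{G}_i(x-y)|\leq C|x-y|^{2-n}.
$$
For the corrector, I would rescale the problem by $y\mapsto y/R$ so that only $\delta = R^{-1}d(K,\partial B_0(R))$ enters, and then apply interior Schauder estimates on a chain of balls of radius comparable to $\delta$ within the rescaled ball; the $C^{1,\alpha}$ norm of the Neumann datum $\mathcal{L}_\xi \mathcal{G}_i(x-\cdot)\cdot\nu$ on $\partial B_0(1)$ is then controlled in terms of $\delta$ alone, yielding $\|H_{i,R}(x,\cdot)\|_{C^1(K)}\leq C(\delta)$ uniformly in $x\in K$. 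Combining the two estimates produces the stated bound. The delicate point here, and what I expect to be the main technical nuisance rather than a genuine obstacle, is tracking precisely how $C(\delta)$ depends on $\delta$: the bound inevitably degenerates as $K$ approaches $\partial B_0(R)$, which is exactly why the proposition requires a fixed compact $K\subset B_0(R)$.
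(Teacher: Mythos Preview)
Your construction is the standard and correct one. The paper itself states this proposition in the annex \emph{without proof}, treating it as a known result from elliptic theory for the Lam\'e operator; so there is no argument in the paper to compare your proposal against beyond the surrounding material (the explicit fundamental solution $\mathcal{G}_i$, the free-space representation formula, and the $W^{2,p}$ regularity proposition), all of which you invoke exactly as intended.

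Your compatibility check is the key step and is carried out correctly: applying the free-space representation to $K\in K_R$ and using $\overrightarrow{\Delta}_\xi K=0$, $\mathcal{L}_\xi K=0$ indeed collapses that formula to $K_i(x)=-\int_{\partial B_0(R)}\mathcal{L}_\xi(\mathcal{G}_i(x-\cdot))_{kl}\nu^k K^l\,d\sigma$, matching the left-hand side $-K_i(x)$ coming from the projection source. The normalisation $H_{i,R}(x,\cdot)\perp K_R$ is the right way to enforce uniqueness, and the rescaling argument for the $C(\delta)$ bound on the corrector is exactly how one makes the dependence on $d(K,\partial B_0(R))/R$ explicit. One minor point worth tightening when you write this up: the Neumann problem you solve is inhomogeneous in both bulk and boundary, so you should state which variational or Fredholm framework you are using (e.g.\ reducing to a homogeneous Neumann problem with modified right-hand side, or invoking directly an $H^1$ existence result for the mixed data); the paper's annex only records the interior $W^{2,p}$ estimate, so strictly speaking you are importing a boundary-value existence result from outside the paper, which is fine but should be flagged.
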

\subsection{Limiting equation}
The following lemma has been proved in \cite{Val19}.
\begin{lemma}\label{lem: A12} Let $u$ be a bounded subharmonic function defined on $\R^n$. If there exists $0<\varepsilon\leq u$ which bounds $u$ from below and $\alpha>0$ such that $u^{-\alpha}$ is a subharmonic function, then $u$ is a constant.
\end{lemma}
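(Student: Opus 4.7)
The plan is to give a soft argument based on the simultaneous sub-mean-value properties encoded by $\Delta u\geq 0$ and $\Delta u^{-\alpha}\geq 0$, avoiding any Riesz-type decomposition. The intuition is that each of the two bounded subharmonic functions $u$ and $u^{-\alpha}$ must, on large-ball averages, approach its own supremum; once this is translated into density information on the level sets of $u$, one finds that $u$ is both close to $\sup u$ and close to $\inf u$ on sets whose densities sum to more than one, forcing $\sup u=\inf u$.

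Concretely, I would first recall the standard fact that for any subharmonic function $w$ on $\mathbb R^n$ the map $R\mapsto |B_R(x)|^{-1}\int_{B_R(x)}w$ is non-decreasing. Applied to $w=u$, which is bounded above by $M:=\sup u<\infty$, this produces a limit $\bar u(x)$ that is independent of $x$ (using the elementary estimate $|B_R(x_1)\triangle B_R(x_2)|=O(R^{n-1}|x_1-x_2|)$ combined with $\|u\|_\infty\leq M$). The chain $u(x)\leq |B_R(x)|^{-1}\int_{B_R(x)}u\leq\bar u\leq \sup u$ then forces $\bar u=\sup u$. Applying the same reasoning to the bounded subharmonic function $u^{-\alpha}\in[M^{-\alpha},\varepsilon^{-\alpha}]$ yields $|B_R(x)|^{-1}\int_{B_R(x)}u^{-\alpha}\to\sup u^{-\alpha}=(\inf u)^{-\alpha}$ for every $x$.

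The conclusion then follows by Chebyshev's inequality together with an inclusion--exclusion step. Writing $v:=\sup u-u\geq 0$, the above gives $|B_R|^{-1}\int_{B_R} v\to 0$, so for any $\delta>0$ Chebyshev yields $|\{v\geq\delta\}\cap B_R|/|B_R|\to 0$; equivalently, the upper density of $A_\delta:=\{u>\sup u-\delta\}$ is one. Symmetrically, setting $\eta(\delta'):=((\inf u)^{-\alpha}-\delta')^{-1/\alpha}\to \inf u$ as $\delta'\to 0$, the set $B_{\delta'}:=\{u<\eta(\delta')\}$ has upper density one. From $|A_\delta\cap B_{\delta'}\cap B_R|\geq |A_\delta\cap B_R|+|B_{\delta'}\cap B_R|-|B_R|$ one sees that $A_\delta\cap B_{\delta'}$ has density one at infinity, hence is non-empty; any point in it witnesses $\sup u-\delta<\eta(\delta')$. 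Letting $\delta,\delta'\to 0$ gives $\sup u\leq\inf u$, so $u$ is constant.

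The only non-trivial ingredient is the monotonicity of the ball averages $R\mapsto |B_R(x)|^{-1}\int_{B_R(x)}u$, which is classical: it follows at once from $\tfrac{d}{dr}\bigl(|\partial B_r(x)|^{-1}\int_{\partial B_r(x)}u\,d\sigma\bigr)=|\partial B_r(x)|^{-1}\int_{B_r(x)}\Delta u\geq 0$ combined with the layer-cake expression $|B_R|^{-1}\int_{B_R}u=nR^{-n}\int_0^R r^{n-1}|\partial B_r|^{-1}\int_{\partial B_r}u\,d\sigma\,dr$. No Riesz decomposition, no PDE regularity beyond what the statement already provides, and no restriction on the dimension is used; the whole argument is finished in a few lines once the two limit statements are in hand.
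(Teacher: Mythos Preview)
Your argument is correct and, in fact, a bit cleaner than the paper's. Both proofs rest on the same two observations: (i) for a bounded subharmonic function the large-radius averages climb up to the global supremum, and (ii) this forces, via a density argument on level sets, the function to be simultaneously close to its sup and its inf on large sets. Where you diverge is in the implementation. The paper works with sphere averages, defines $M$ as the limit of $\overline{u^{-\alpha}}_x(R)$, obtains $u\ge M$ from the sub-mean-value property, and then uses the Poisson kernel estimate $u(z)\le \frac{(\tilde R+R)\tilde R^{n-2}}{(\tilde R-R)^{n-1}}\,\overline{u}_x(\tilde R)$ together with an inductive doubling scheme on $\lambda_{\delta,i}=\bar u_x(2^iR)$ to push $u(x)\le M$; your route instead exploits the symmetry between $u$ and $u^{-\alpha}$, treats both exactly the same way via ball averages, and finishes with a straightforward Chebyshev/inclusion--exclusion step. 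This buys you a shorter argument with no Poisson kernel and no iteration, while the paper's version makes the role of the Harnack-type inequality more explicit. Either way the conclusion is reached without any Riesz decomposition or dimensional restriction.
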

\begin{proof}
Let us denote
$$\bar{u}_{x}(R):=\frac{1}{\omega_{n-1}R^{n-1}}\int_{\partial B_{x}(R)}u(y)\,dy$$
the average of a smooth function $u$ over the sphere $\partial B_{x}(R)$. We will sometimes use the simplified notation $\bar{u}(R)$. Recall that, given any subharmonic function $u$, $x\in\R^n$ and for any two radii $R\leq\tilde{R}$, then
\begin{equation}\label{decreasing averages}
\bar{u}_{x}(R)\leq \bar{u}_x(\tilde{R}).
\end{equation}
This follows from
$$r^{n-1}\bar{u}'(r)=\frac{1}{\omega_{n-1}}\int_{\partial B_x{(r)}}\partial_\nu u(y)\,dy=-\frac{1}{\omega_{n-1}}\int_{B_x{(r)}}\Delta u(y)\,dy\geq 0$$
where $r>0$ and $\nu$ is the exterior normal.
\par Note that $u^{-\alpha}\leq \varepsilon^{-\alpha}$ implies that the average of $u^{-\alpha}$ on arbitrary subsets is uniformly bounded. Let us fix $x\in\R^n$. Since $u^{-\alpha}$ is bounded, there exists a constant $M>0$ and a sequence of radii $R_i\to\infty$ as $i\to\infty$ such that
\begin{equation}\label{def M}
M^{-\alpha}:=\lim_{i\to\infty}\overline{u^{-\alpha}}_{x}(R_i).
\end{equation}
In fact, because the averages are increasing (\ref{decreasing averages}), any sequence $R\to\infty$ around any point in $\R^n$ leads to the same limit $M$, since one may always find a subsequence of $R_i$ such that $B_x(R_i)$ includes the new sequence.  
\par As $u^{-\alpha}$ is subharmonic,
$$u^{-\alpha}(x)\leq \overline{u^{-\alpha}}_x(R)$$
and therefore $u^{-\alpha}(x)\leq M^{-\alpha}$, or equivalently 
\begin{equation}\label{M smaller than u x}
M\leq u(x).
\end{equation}
\par For $z\in \R^n$, let $R:=|z-x|$ and $\tilde{R}>R$. By Green's representation theorem, we get
\begin{equation}\label{est: from the annex}
\begin{array}{r l}
\displaystyle u(z) &\displaystyle \leq \int_{\partial B_x(\tilde{R})}u(y)\frac{\tilde{R}^2-R^2}{\omega_{n-1}\tilde{R}|z-y|^n}\, dy\\ \\
&\displaystyle\leq \frac{(\tilde{R}+R)\tilde{R}^{n-2}}{(\tilde{R}-R)^{n-1}}\overline{u}_x(\tilde{R}).
\end{array}
\end{equation}
For $\delta>0,$ we denote
$$\Omega_{\delta,R}:=\{z\in\partial B_x(R), u(z)\geq M+\delta\}$$
a subset of $\partial B_x(R)$ and let
$$\theta_{\delta, R}:=\frac{|\Omega_{\delta, R}|}{|\partial B_x(R)|}\in[0,1]$$
be the corresponding relative size of its volume. Note that $\theta_{\delta,R}\to 0$ as $R\to\infty$. Otherwise, if there exists $\varepsilon\in(0,1]$ such that
$$\limsup_{R\to\infty}\frac{|\{z\in\partial B_x(R), u(z)\geq M+\delta\}|}{|\partial B_x(R)|}=\varepsilon$$
then
$$\limsup_{R\to\infty}\overline{u^{-\alpha}}_x(R) \leq \varepsilon(M+\delta)^{-\alpha}+(1-\varepsilon)M^{-\alpha}<M^{-\alpha}$$
which contradicts our definition (\ref{def M}) of $M$.
\par By choosing $R$ large, $\theta_{\delta,R}\leq \delta$. Let
$$\lambda_{\delta,i}:=\bar{u}_x(2^iR)$$
Note that, by (\ref{est: from the annex}), $\lambda_{\delta,i}\leq 3\times 2^{n-2}\lambda_{\delta, i+1}.$ Since
$$u(x)\leq\lambda_{\delta,i}\leq(M+\delta)(1-\theta_{\delta,2^iR})+\lambda_{\delta,i+1}\times\theta_{\delta,2^iR}$$
then, by induction,
$$u(x)\leq (M+\delta)\frac{1-\delta^l}{1-\delta}+\lambda_l\delta^l$$
for all $l\in \N$. As we take $l\to\infty$,
$$u(x)\leq(M+\delta)\frac{1}{1-\delta}$$
for any $\delta>0$, and therefore $u(x)\leq M.$ By (\ref{M smaller than u x}), $u(x)\equiv M.$
\par We may apply the same argument to any other $\tilde{x}\in\R^n$ and obtain the same value $u(\tilde{x})=M$. Indeed, assuming that 
$$\tilde{M}^{-\alpha}:=\lim_{\tilde{R}\to\infty}\overline{u^{-\alpha}}_{\tilde{x}}(\tilde{R})$$
so that $\tilde{M}^{-\alpha}\geq M^{-\alpha}$, then for $\tilde{R}$ large, $\overline{u^{-\alpha}}_{\tilde{x}}(\tilde{R})\geq M^{-\alpha}$. But, at the same time, given any fixed $\tilde{R}$, then for $R$ sufficiently large, by (\ref{est: from the annex}), $\overline{u^{-\alpha}}_{\tilde{x}}(\tilde{R})\leq \overline{u^{-\alpha}}_{x}(R)$. Thus we obtain that $u\equiv M$ in $\R^n$.
\end{proof}

\clearpage
\bibliographystyle{alpha} 
\bibliography{stability} 
\end{document}